\DeclareMathOperator{\Ad}{Ad}
\DeclareMathOperator{\ev}{ev}
\DeclareMathOperator{\Gr}{Gr}
\DeclareMathOperator{\Hom}{Hom}
\DeclareMathOperator{\id}{id}
\DeclareMathOperator{\Image}{Im}
\DeclareMathOperator{\Ker}{Ker}
\DeclareMathOperator{\Proj}{Proj}
\DeclareMathOperator{\std}{std}
\DeclareMathOperator{\Spec}{Spec}
\DeclareMathOperator{\Sym}{Sym}
\DeclareMathOperator{\type}{type}
\DeclareMathOperator{\Der}{Der}
\DeclareMathOperator{\PGL}{PGL}
\DeclareMathOperator{\SL}{SL}
\DeclareMathOperator{\SO}{SO}
\DeclareMathOperator{\SU}{SU}
\newcommand{\bA}{\mathbb{A}}
\newcommand{\bC}{\mathbb{C}}
\newcommand{\bP}{\mathbb{P}}
\newcommand{\bR}{\mathbb{R}}
\newcommand{\bZ}{\mathbb{Z}}
\newcommand{\cB}{\mathcal{B}}
\newcommand{\cI}{\mathcal{I}}
\newcommand{\cJ}{\mathcal{J}}
\newcommand{\cP}{\mathcal{P}}
\newcommand{\cO}{\mathcal{O}}
\newcommand{\fg}{\mathfrak{g}}
\newcommand{\fh}{\mathfrak{h}}
\newcommand{\fk}{\mathfrak{k}}
\newcommand{\fl}{\mathfrak{l}}
\newcommand{\fm}{\mathfrak{m}}
\newcommand{\fp}{\mathfrak{p}}
\newcommand{\fq}{\mathfrak{q}}
\newcommand{\fsl}{\mathfrak{sl}}
\newcommand{\ft}{\mathfrak{t}}
\newcommand{\fz}{\mathfrak{z}}
\def\bfg{\boldsymbol{\mathfrak{g}}}
\def\bfl{\boldsymbol{\mathfrak{l}}}
\def\bfq{\boldsymbol{\mathfrak{q}}}
\def\bff{\boldsymbol{f}}
\def\bfi{\boldsymbol{i}}
\def\bfl{\boldsymbol{l}}
\def\bfp{\boldsymbol{p}}
\def\bfr{\boldsymbol{r}}
\def\bfx{\boldsymbol{x}}
\def\bfA{\boldsymbol{A}}
\def\bfB{\boldsymbol{B}}
\def\bfC{\boldsymbol{C}}
\def\bfG{\boldsymbol{G}}
\def\bfH{\boldsymbol{H}}
\def\bfK{\boldsymbol{K}}
\def\bfQ{\boldsymbol{Q}}
\def\bfX{\boldsymbol{X}}
\def\bfY{\boldsymbol{Y}}
\def\bfZ{\boldsymbol{Z}}
\def\bfcB{\boldsymbol{\mathcal{B}}}
\def\bfepsilon{\boldsymbol{\epsilon}}
\def\bfDelta{\boldsymbol{\Delta}}
\theoremstyle{plain}
\newtheorem{thm}{Theorem}[section]
\newtheorem{cor}[thm]{Corollary}
\newtheorem{lem}[thm]{Lemma}
\newtheorem{prop}[thm]{Proposition}
\newtheorem{var}[thm]{Variant}
\theoremstyle{definition}
\newtheorem{cond}[thm]{Condition}
\newtheorem{cons}[thm]{Construction}
\newtheorem{defn}[thm]{Definition}
\newtheorem{ex}[thm]{Example}
\newtheorem{rem}[thm]{Remark}
\begin{document}
	\title[Algebraic approach]{Algebraic approach to contraction families}
	\author{Takuma Hayashi}
	\address{Osaka Central Advanced Mathematical Institute, 3-3-138 Sugimoto, Sumiyoshi-ku Osaka 558-8585, Japan}
	\email{takuma.hayashi.forwork@gmail.com}
	\date{}
	\subjclass[2020]{Primary: 13D10. Secondary: 14L15, 14L30.}
	\keywords{Contraction algebras, scheme-theoretic closure, faithfully flat and Galois descent, symmetric varieties, partial flag schemes}
	\begin{abstract}
		In this paper, we give a purely algebraic approach to the contraction group scheme predicted by Bernstein--Higson--Subag and constructed by Barbasch--Higson--Subag. We also compare quotient schemes of contraction group schemes with other related schemes, equipped with actions of contraction group schemes in the cases of symmetric and $\theta$-stable parabolic subgroups.
	\end{abstract}
	
	\maketitle
	
	\section{Introduction}\label{sec:intro}
	
	The basic idea of contraction after In\"on\"u--Wigner in \cite{MR55352} is to obtain a new Lie algebra (group) by replacing its structure constant with a parameter $t$ and sending it to zero. On this course, we obtain a one-parameter family of Lie algebras (groups), which is called a contraction family.
	
	Around 2016, Bernstein, Higson, and Subag started a project on an algebraic formalism for In\"on\"u--Wigner's contraction families (\cite{MR4123111, MR4130851}). In fact, they predicted the existence of group schemes over the complex and real projective lines for an algebraic model of the contraction families attached to groups with involutions in \cite{MR4123111}. They also gave examples for some classical symmetric pairs. After that, Barbasch, Higson, and Subag constructed the group schemes in \cite{MR3797197}. Their group schemes generalize the example of a non-reductive group scheme in \cite[Section 5]{MR0228502}. They also proved that the proposed group schemes are smooth (\cite[Proposition 4.3]{MR3797197}), and that the fibers at $t=0,\infty$ are the Cartan motion groups (\cite[Proposition 4.2]{MR3797197}). In the real setting, they also proved in \cite[Theorem 5.1]{MR3797197} the fiber at positive (resp.~negative) real points are isomorphic to the given real group (resp.~the real form of the complexfication attached to the composition of the given complex conjugate action and the involution).
	
	Their strategy is as follows:
	\begin{itemize}
		\item Choose a faithful representation $G\to \SL_n$ of a complex group $G$ to the special linear group $\SL_n$ of degree $n$.
		\item Define a new faithful representation 
		\[G\times_{\Spec\bC} \Spec\bC\left[\sqrt{t}^{\pm 1}\right]
		\to \SL_{2n}\times_{\Spec\bC} \Spec\bC\left[\sqrt{t}^{\pm 1}\right].\]
		\item See the conjugate action defined by $\sqrt{t}\mapsto -\sqrt{t}$ in $\SL_{2n}\times_{\Spec\bC} \Spec\bC\left[\sqrt{t}^{\pm 1}\right]$ to descend $G\times_{\Spec\bC} \Spec \bC\left[\sqrt{t}^{\pm 1}\right]$ to a group scheme $G_t$ over $\bC\left[t^{\pm 1}\right]$.
		\item Take the Zariski closure in $\SL_{2n}\times_{\Spec\bC} \bP^1_\bC$ to obtain a group scheme $\bfG$ over $\bP^1_\bC$, where $\bP^1_\bC$ is the complex projective line.
		\item Use the classical topology to compute the fibers at $t=0,\infty$ and to prove that $\bfG$ is smooth over $\bP^1_\bC$.
		\item If $G$ is defined over the real numbers, induce the anti-holomorphic involution on $\bfG$ from that on $G$ to obtain a group scheme over the real projective line. Its fibers are computed by unwinding their definitions.
	\end{itemize}
	
	It is still an interesting problem to treat contraction families in an algebraic way to let us work over general ground rings with $1/2$. This allows us to iterate the one-parameter contraction described above to obtain contraction of multi-variables (multi-contraction). Once we prove the smoothness over a general ground ring, we repeat this result to deduce smoothness of multi-contraction schemes. Another point is to obtain arithmetic structures of the families. In fact, the lack of $\sqrt{q}$ for positive rational numbers $q$ in general should lead to a family of algebraic groups which are not isomorphic to a given rational algebraic group with an involution.
	
	The purpose of this paper is to give a purely algebraic and direct approach to their theory: We work over general ground rings $k$ with $1/2$. We define the contraction algebra $\bfA$ over the polynomial ring $k\left[t\right]$ explicitly for a commutative $k$-algebra $A$ with an involution $\theta$ (see Definition \ref{defn:contraction}). In particular, the construction of $\bfA$ is functorial in $(A,\theta)$. As we will occasionally prove, $\bfA\otimes_{k\left[t\right]} k\left[\sqrt{t}\right]$ is isomorphic to the extended Rees algebra (see \eqref{eq:rees}). In other words, $\bfA$ can be obtained by a certain faithfully flat descent of the extended Rees algebra (see also \cite[Sections 2.1.2 and 2.1.3]{MR4130851} for appearance of the extended Rees algebras). The fiber $\bfA_{t_0}$ of $\bfA$ at $t=t_0$ for a unit $t_0$ of $k$ is a twisted $k$-form of $A\left[x\right]/(x^2-t_0)$ with respect to the quadratic Galois extension $k\left[x\right]/(x^2-t_0)\supset k$ by manifest:
	
	\begin{prop}[Proposition \ref{prop:fiber_descent} (2) and (3), Lemma \ref{lem:relation} (4)]
		Define involutions on $k\left[x\right]/(x^2-t_0)$ and $A\left[x\right]/(x^2-t_0)$ by
		\[\begin{array}{cc}
			\sigma_k:a+ bx\mapsto a-bx,&\sigma_A:a+ bx\mapsto \theta(a)-\theta(b)x
		\end{array}\]
		respectively. 
		\begin{enumerate}
			\renewcommand{\labelenumi}{(\arabic{enumi})}
			\item The canonical homomorphism $\varphi:k\left[x\right]/(x^2-t_0)\to A\left[x\right]/(x^2-t_0)$ commutes with the involution, that is, $\varphi\circ \sigma_k=\sigma_A\circ \varphi$. Equivalently, $\sigma_A$ is semi-linear over $k\left[x\right]/(x^2-t_0)$ for $\sigma_k$, i.e., we have \[\sigma_A(\varphi(\alpha)\beta)=\varphi(\sigma_k(\alpha))\sigma_A(\beta)\]
			for all $\alpha\in k\left[x\right]/(x^2-t_0)$ and $\beta\in A\left[x\right]/(x^2-t_0)$.
			\item There is an isomorphism of $k$-algebras from $A_{t_0}$ onto the fixed point subalgebra of $A\left[x\right]/(x^2-t_0)$ by $\sigma_A$. Moreover, its scalar extension gives rise to an isomorphism $\bfA_{t_0}\left[x\right]/(x^2-t_0)\cong A\left[x\right]/(x^2-t_0)$.
			\item If the equation $x^2=t_0$ admits a solution in $k$, then we have a $k$-algebra isomorphism $\bfA_{t_0}\cong A$.
		\end{enumerate}
	\end{prop}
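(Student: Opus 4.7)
The plan is to handle the three parts in sequence, with (1) and (3) reducing to explicit computations and (2) forming the technical core, carried out via faithfully flat descent along $k \hookrightarrow k[x]/(x^2-t_0)$.

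For part (1), I unwind definitions. The canonical $\varphi$ sends $a + bx$ with $a,b\in k$ to the same element in $A[x]/(x^2-t_0)$, and $\theta$ restricts to the identity on $k$ since it is a $k$-algebra involution. Thus
\[
\sigma_A(\varphi(a + bx)) \;=\; \theta(a) - \theta(b)x \;=\; a - bx \;=\; \varphi(\sigma_k(a+bx)),
\]
which is the first formulation. The equivalence with $\sigma_k$-semi-linearity of $\sigma_A$ is formal: since $\sigma_A$ is a ring homomorphism, $\sigma_A(\varphi(\alpha)\beta) = \sigma_A(\varphi(\alpha))\sigma_A(\beta)$, and this equals $\varphi(\sigma_k(\alpha))\sigma_A(\beta)$ exactly when $\sigma_A\circ \varphi = \varphi\circ \sigma_k$.

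For part (2), I invoke the isomorphism $\bfA\otimes_{k[t]} k[\sqrt{t}] \cong R$ with the extended Rees algebra, stated just before the proposition, together with the fact that $\bfA$ is recovered as the subalgebra of $\tau$-fixed points under the descent involution $\tau$ whose base action is $\sqrt{t}\mapsto -\sqrt{t}$. Applying $-\otimes_{k[t]} k$ at $t\mapsto t_0$ gives
\[
A_{t_0}\otimes_k k[x]/(x^2-t_0) \;\cong\; R\otimes_{k[t]} k \;\cong\; A[x]/(x^2-t_0),
\]
where the last identification comes from specializing $\sqrt{t}$ to $x$, which is automatically a unit in the fiber because $x^2=t_0$ is a unit. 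This yields the scalar-extension assertion. Unwinding the explicit form of $\tau$ on $R$, the specialized involution is precisely $\sigma_A$ as written in the proposition. Because $t_0$ is a unit and $2\in k^\times$, the extension $k\hookrightarrow k[x]/(x^2-t_0)$ is an \'etale $\bZ/2$-Galois cover generated by $\sigma_k$, so faithfully flat Galois descent identifies the $\sigma_A$-fixed subalgebra of $A[x]/(x^2-t_0)$ with $A_{t_0}$.

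For part (3), a square root $\alpha\in k$ of $t_0$ yields an isomorphism $k[x]/(x^2-t_0)\cong k\times k$ via $f(x)\mapsto (f(\alpha),f(-\alpha))$, under which $\sigma_k$ acts as the swap. Base-changing to $A$ gives $A[x]/(x^2-t_0)\cong A\times A$, and a short direct calculation shows that $\sigma_A$ corresponds to the involution $(u,v)\mapsto (\theta(v),\theta(u))$ on $A\times A$. Its fixed subalgebra is $\{(u,\theta(u)):u\in A\}$, which is isomorphic to $A$ via the first projection. Combining with part (2) gives $A_{t_0}\cong A$.

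The main obstacle is the compatibility step in part (2): verifying that the descent involution on the extended Rees algebra specializes exactly to $\sigma_A$ in the proposition's notation. This reduces to tracking the action of $\tau$ on the Rees generators, which is a direct computation but requires care with signs and with the $\bZ/2$-grading on $A$ induced by $\theta$.
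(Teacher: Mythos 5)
Your proof is correct, but it takes the Galois-descent route that the paper only mentions as a ``conceptual'' alternative, whereas the paper's actual proof runs in the opposite logical direction by explicit computation. Concretely, the paper first writes down the involution $\sum_i a_i\sqrt{t}^i\mapsto\sum_i\theta(a_i)(-\sqrt{t})^i$ on $A\left[\sqrt{t}^{\pm 1}\right]$ and checks directly that its fixed points are $A_t$ (Proposition \ref{prop:fiber_descent} (1)); specializing at $t=t_0$ then identifies the fixed points of $\sigma_A$ concretely as $A^{\theta}\oplus A^{-\theta}x\cong A_{t_0}$, and the scalar-extension isomorphism of Lemma \ref{lem:relation} (3) is obtained separately by base-changing the explicit decomposition $A\left[\sqrt{t}^{\pm 1}\right]=A_t\oplus\frac{1}{\sqrt{t}}A_t$ along $\sqrt{t}\mapsto x$. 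You instead establish the scalar-extension isomorphism first and then recover the fixed-point statement from descent; this is sound, and the ``main obstacle'' you flag --- that the descent involution specializes to $\sigma_A$ --- is genuinely a one-line check (on $a\sqrt{t}^{i}$ the involution acts by $\theta(a)(-1)^i\sqrt{t}^i$, which at $\sqrt{t}=x$ is exactly $a+bx\mapsto\theta(a)-\theta(b)x$), so nothing essential is missing. Two small remarks: the detour through the extended Rees algebra \eqref{eq:rees} is unnecessary here, since $t_0\in k^\times$ means only the localization $\bfA\otimes_{k\left[t\right]}k\left[\sqrt{t}^{\pm 1}\right]\cong A\left[\sqrt{t}^{\pm 1}\right]$ of Lemma \ref{lem:relation} (1), (2) is ever used; and your proof of (3) by splitting $k\left[x\right]/(x^2-t_0)\cong k\times k$ and computing the swap-type involution on $A\times A$ is a pleasant alternative to the paper's more direct evaluation $x\mapsto\alpha$, which sends $A^\theta\oplus A^{-\theta}x$ isomorphically onto $A^\theta\oplus A^{-\theta}=A$. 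What the explicit route buys is the concrete description $A^\theta\oplus A^{-\theta}x$ of the fixed-point algebra, which the paper uses later; what your route buys is that the same argument applies verbatim to any quadratic Galois descent situation.
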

	
	This generalizes the description of fibers of $\bfG$ at nonzero real points in \cite[Theorem 5.1]{MR3797197}.
	
	Our main results are:
	
	\begin{thm}[Theorems \ref{thm:smooth}, \ref{thm:Hopf}]\label{thm:summary}
		Suppose that $A$ is smooth over $k$. Let $I\subset A$ be the ideal generated by
		$\{a\in A:~\theta(a)=-a\}$.
		\begin{enumerate}
			\renewcommand{\labelenumi}{(\arabic{enumi})}
			\item There is a $k$-algebra isomorphism $\bfA/(t)\cong \Sym_{A/I} I/I^2$, where $\Sym_{A/I} I/I^2$ is the symmetric algebra over $A/I$ generated by $I/I^2$.
			\item The structure homomorphism $k\left[t\right]\to \bfA$ is smooth.
			\item If $A$ is a Hopf algebra over $k$, $\bfA$ is naturally equipped with the structure of a Hopf algebra over $k\left[t\right]$.
		\end{enumerate}
	\end{thm}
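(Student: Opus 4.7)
The strategy is to use the $\bZ/2$-descent presentation of $\bfA$: if $s = \sqrt t$, then $\bfA \otimes_{k[t]} k[s]$ is the extended Rees algebra $\tilde A = \sum_{n \in \bZ} I^n s^{-n} \subset A[s, s^{-1}]$ (with $I^n = A$ for $n \le 0$), and $\bfA$ is its $\bZ/2$-fixed subring under $\theta \otimes (s \mapsto -s)$. The $s$-grading on $\tilde A$ restricts to a $\bZ$-grading on $\bfA$, and multiplication by $t$ coincides with multiplication by $s^2$, i.e., a shift of grading by two. All three statements reduce to formal consequences of this presentation together with standard principles.

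For part (1), I would compute $\bfA/(t)$ degree by degree. Invariance forces the $s$-degree $d$ piece of $\bfA$ to be $A^{(-1)^d} s^d$ when $d \ge 0$ and $(A^-)^{-d} s^d$ when $d < 0$. After quotienting by $t = s^2$, all strictly positive-degree pieces vanish and one is left with $(A^-)^n/(A^-)^{n+2}$ in degree $-n$ for $n \ge 0$ (with the convention $(A^-)^0 := A^+$). A short calculation, using that $(A^-)^n$ and $(A^-)^{n+1}$ lie in opposite $\theta$-eigenspaces of $A$, gives $I^n = (A^-)^n \oplus (A^-)^{n+1}$ and hence $(A^-)^n/(A^-)^{n+2} \cong I^n/I^{n+1}$. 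Summing, $\bfA/(t) \cong \bigoplus_n I^n/I^{n+1} = \mathrm{gr}_I A$. Since $A$ is smooth over $k$ and $1/2 \in k$, the fixed subscheme $V(I)$ is smooth, $I/I^2$ is projective over $A/I$, and the canonical surjection $\Sym_{A/I}(I/I^2) \twoheadrightarrow \mathrm{gr}_I A$ is an isomorphism, proving (1).

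For part (2), I would apply the criterion that a finitely presented morphism is smooth iff it is flat with smooth geometric fibers. Flatness of $\bfA$ over $k[t]$ follows from the identification $\bfA \otimes_{k[t]} k[s] = \tilde A$, faithful flatness of $k[s]/k[t]$, and $k[s]$-flatness of $\tilde A$ (standard for extended Rees algebras under smoothness, since $s$ is regular on $\tilde A$ and $\tilde A/s\tilde A = \mathrm{gr}_I A$ is $k$-flat). The fiber at $t = 0$ is smooth by part (1), since a symmetric algebra on a projective module is smooth over its base. Over $\{t \ne 0\}$, $\bfA[t^{-1}] \otimes_{k[t^{\pm 1}]} k[s^{\pm 1}] = A[s^{\pm 1}]$ is smooth over $k[s^{\pm 1}]$; since $k[s^{\pm 1}]/k[t^{\pm 1}]$ is finite \'etale (because $2 \in k^\times$), smoothness descends.

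For part (3), the construction is manifestly functorial in $(A, \theta)$. The essential extra ingredient is monoidality: $\widetilde{A_1 \otimes_k A_2} \cong \bfA_1 \otimes_{k[t]} \bfA_2$ and $\widetilde{k} \cong k[t]$ (with $k$ equipped with the trivial involution). Both are verified after base change to $k[s]$ by noting that $\tilde A_1 \otimes_{k[s]} \tilde A_2$ is the extended Rees algebra of $A_1 \otimes_k A_2$ with respect to $I_1 \otimes A_2 + A_1 \otimes I_2$, which coincides with the ideal generated by the anti-invariants under the diagonal involution $\theta_1 \otimes \theta_2$. Given this, the $\theta$-equivariant Hopf maps $\Delta, \epsilon, S$ on $A$ (automatic when $\theta$ is a Hopf involution) yield the corresponding maps on $\bfA$ by functoriality, and the Hopf axioms transfer through the functor. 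The main obstacle is the careful bookkeeping of $\bZ/2$-invariants in each $s$-degree for part (1); once the identification $\bfA/(t) \cong \mathrm{gr}_I A$ is secured, parts (2) and (3) follow by essentially formal arguments.
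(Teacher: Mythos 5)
Your overall route is the same as the paper's: identify $\bfA\otimes_{k[t]}k[\sqrt{t}]$ with the extended Rees algebra, compute $\bfA/(t)$ degree by degree using $I^n=(A^{-\theta})^n\oplus(A^{-\theta})^{n+1}$, pass from $\oplus_n I^n/I^{n+1}$ to $\Sym_{A/I}I/I^2$ via the regular-immersion property of $\Spec A/I\hookrightarrow\Spec A$, prove smoothness of $\bfA$ by combining flatness with smoothness of the loci $t=0$ and $t\neq 0$ plus finite presentation, and obtain the Hopf structure from functoriality together with monoidality of $(A,\theta)\rightsquigarrow\bfA$ (which, exactly as in Proposition \ref{prop:monoidal} (5), needs the flatness from part (2) to make the comparison map $\bfA\otimes_{k[t]}\bfA\to\bfA\otimes_{k}\bfA$-side injective). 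Parts (1) and (3) are essentially the paper's arguments.

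There is, however, a genuine gap in part (2), and it is precisely the point the paper spends Lemmas \ref{lem:smoothI} and \ref{lem:smoothII} on. The theorem is stated over an arbitrary $\bZ[1/2]$-algebra $k$. Your justification of $k[\sqrt{t}]$-flatness of the extended Rees algebra --- ``$s$ is regular on $\tilde A$ and $\tilde A/s\tilde A=\mathrm{gr}_I A$ is $k$-flat'' --- is the local criterion of flatness, which is only valid under Noetherian (or suitable finiteness) hypotheses; it is not a citable criterion over a general base. Moreover, the smoothness criterion you invoke presupposes that $\bfA$ is of finite presentation over $k[t]$, which you never verify: finite type is clear from the generators, but finite generation of the relations is a real issue when $k$ is not Noetherian (the paper explicitly flags this as the technical step). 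The paper's resolution is: over a Noetherian base, prove flatness by an explicit filtration of the extended Rees algebra whose successive quotients are $\Sym^m_{A/I}(I/I^2)\otimes_{A/I}(A/I)[u]$ together with a bottom piece built from $I^n$, all flat because $I/I^2$ is finitely generated projective over $A/I$ (from the regular immersion), and get finite presentation from Hilbert's basis theorem; then reduce the general case to this one by working \'etale-locally on $\Spec A$, where $\Spec A/I\hookrightarrow\Spec A$ becomes a standard linear closed immersion of affine spaces over $\Spec\bZ$ (Lemma \ref{lem:smoothII}). In the Noetherian case your argument can be repaired by the usual local criterion, but to prove the theorem as stated you need some version of this reduction (or another flatness and finite-presentation argument valid over non-Noetherian rings).
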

	Part (1) is proved by a direct computation. We prove (2) by showing
	\begin{enumerate}
		\renewcommand{\labelenumi}{(\roman{enumi})}
		\item $\bfA$ is flat over $k\left[t\right]$;
		\item $\bfA\otimes_{k\left[t\right]} k\left[t^{\pm 1}\right]$ is smooth over $k\left[t^{\pm 1}\right]$;
		\item $\bfA\otimes_{k\left[t\right]} k\left[t\right]/(t)$ is smooth over $ k\left[t\right]/(t)$;
		\item $\bfA$ is finitely presented over $k\left[t\right]$.
	\end{enumerate}
	The crucial step is (i). This is verified by studying a certain filtration on \[\bfA\otimes_{k\left[t\right]} k\left[\sqrt{t}\right].\]
	It is technical to check (iv). After the faithfully flat descent, we may prove that extended Rees algebras are finitely presented under a corresponding assumption. Then we work locally in the \'etale topology of $\Spec A$ and then may and do replace $k$ with a Noetherian ring. Then the assertion follows from Hilbert's basis theorem. The Hopf algebra structure stated in (3) is derived from the base change of that on $A$ to $k\left[\sqrt{t}^{\pm 1}\right]$.
	
	Let us also note:
	
	\begin{prop}[Corollary \ref{cor:compatibility}, see also Corollary \ref{cor:comparisonwithbhs} and Propositions \ref{prop:comparisonwithbhs_groups}, \ref{prop:bc_image}]\label{prop:comparison}
		Our definition agrees with that in \cite{MR3797197}.
	\end{prop}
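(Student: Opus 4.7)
The plan is to unwind both constructions and match them directly on the affine chart $\Spec k[t] \subset \bP^1_k$ of the base, and then reassemble globally. The Barbasch--Higson--Subag construction depends on the choice of a faithful representation $\iota : G \hookrightarrow \SL_n$ and produces a $k[t]$-subalgebra of $\cO(\SL_{2n}) \otimes k[t]$, namely the image of the restriction map into $\cO(G) \otimes k[\sqrt{t}^{\pm 1}]$ after the descent determined by $\sqrt{t} \mapsto -\sqrt{t}$; its $\Spec$ is the Zariski closure of the twisted copy of $G \times_k \Spec k[t^{\pm 1}]$. Our algebra $\bfA$ is defined intrinsically from $(A, \theta)$ without a choice of representation. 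The goal is therefore to produce a canonical identification.

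First, I would work on the chart $\Spec k[t]$ and pass to $k[\sqrt{t}]$. On that cover, both constructions become completely transparent: by the isomorphism mentioned after Definition \ref{defn:contraction}, $\bfA \otimes_{k[t]} k[\sqrt{t}]$ is the extended Rees algebra of $A$ with respect to the $\theta$-twisted $\bZ/2$-grading (equivalently, the filtration by powers of $I$ made $2$-periodic), embedded canonically in $A[\sqrt{t}^{\pm 1}]$. On the BHS side, after clearing denominators in $\sqrt{t}$, the coordinate ring of the twisted representation lands in exactly the same subring $A[\sqrt{t}^{\pm 1}] \subset \cO(\SL_{2n}) \otimes k[\sqrt{t}^{\pm 1}]$, and its Zariski closure in $\SL_{2n} \times \Spec k[\sqrt{t}]$ is the extended Rees algebra: this is the content already observed in \cite[Sections 2.1.2 and 2.1.3]{MR4130851}, which I would recall and spell out for our conventions. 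The two extended Rees pictures are tautologically the same, so after base change to $k[\sqrt{t}]$ the two definitions agree.

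Second, I would descend this comparison from $k[\sqrt{t}]$ to $k[t]$. Both constructions carry a semilinear involution compatible with $\sqrt{t} \mapsto -\sqrt{t}$ --- for BHS, this is the conjugate action used in their descent step; for us, it is the involution whose fixed subring is $\bfA$ by construction. One has to verify that under the identification of the preceding paragraph these two semilinear involutions are equal, which amounts to checking the sign convention on $A^-$, and then faithfully flat descent along $k[t] \to k[\sqrt{t}]$ gives the isomorphism of $k[t]$-algebras. Flatness of $\bfA$ over $k[t]$, established as step (i) of Theorem \ref{thm:summary}, ensures that $\bfA$ really is the scheme-theoretic closure and not just a surjective image, matching BHS's Zariski-closure prescription.

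Finally, I would glue with the symmetric construction over the chart $t \mapsto t^{-1}$ (using the involution $\theta$ to swap the two charts as BHS do) to obtain the comparison of global schemes over $\bP^1_k$, and check that all identifications are compatible with the Hopf algebra structure from Theorem \ref{thm:summary}(3), giving the group scheme statement cited as Proposition \ref{prop:comparisonwithbhs_groups}. The main obstacle is bookkeeping: BHS's Zariski closure is a topological/scheme-theoretic operation inside $\SL_{2n}$ that depends a priori on the chosen faithful representation, so one must verify that the resulting subalgebra of $A[\sqrt{t}^{\pm 1}]$ is independent of $\iota$ and equal to the extended Rees algebra. This independence is precisely the point where the algebraic flatness of $\bfA$ and the explicit generators of the extended Rees algebra make the identification go through, rather than relying on the classical-topology computations used in \cite{MR3797197}.
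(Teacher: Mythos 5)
Your overall strategy --- identify the coordinate ring of the Barbasch--Higson--Subag closure with an explicit subalgebra of $A\left[\sqrt{t}^{\pm 1}\right]$ and match it with $\bfA$ --- is the same mechanism the paper uses, but you route it through $k\left[\sqrt{t}\right]$ and the extended Rees algebra followed by Galois descent, whereas the paper works directly over $k\left[t\right]$: Corollary \ref{cor:comparisonwithbhs} produces the closed immersion $\tilde{\iota}$ with exactly the BHS matrix coordinates and shows $\bfX$ is the scheme-theoretic closure of $X_t$, Proposition \ref{prop:comparisonwithbhs_groups} checks that it factors through $\SL_{2n}$ as a homomorphism of group schemes, and the Zariski closure of \cite{MR3797197} is matched with the scheme-theoretic closure via \cite[Chapter 2, Exercise 3.17 (e)]{MR1917232}. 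Two points in your plan are genuinely under-justified. First, the claim that the closure of the twisted copy of $G$ over $k\left[\sqrt{t}\right]$ ``is the extended Rees algebra'' is the computational heart of the whole comparison, and it is not contained in \cite[Sections 2.1.2 and 2.1.3]{MR4130851}, which concern Lie algebras. What one knows a priori is only that the closure's coordinate ring is the subalgebra generated by the matrix entries $a_{\lambda+}$, $\sqrt{t}\,a_{\lambda-}$, $\frac{1}{\sqrt{t}}a_{\lambda-}$; to see that this exhausts the extended Rees algebra (equivalently, that the subalgebra generated over $k\left[t\right]$ exhausts $\bfA$) one needs the elementary invariant theory of Lemma \ref{lem:elementaryinvarianttheory} and Proposition \ref{prop:generatorofbfA}, namely that $A^{\theta}$ is generated by the $a_{\lambda+}$ and $a_{\lambda-}a_{\mu-}$ and that $A^{-\theta}$ is generated by the $a_{\lambda-}$ over $A^{\theta}$. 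This same computation settles the independence of $\iota$ that you flag at the end; flatness of $\bfA$ over $k\left[t\right]$ is not what makes the closure identification work (injectivity of $\bfA\hookrightarrow A_t$, which holds by construction, is what gives $\Ker\tilde{f}=\Ker f_t\cap k\left[t,x_{ij\lambda}\right]$).

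Second, the statement covers $F=\bR$, where the definition of \cite{MR3797197} is not ``run the same construction over $\bR$'' but ``construct over $\bC$ and descend along an anti-holomorphic involution.'' Your proposal never engages with this. One must show that the BHS real form agrees with the intrinsic construction applied over $\bR$; the paper does so by applying Proposition \ref{prop:comparisonwithbhs_groups} over both $\bR$ and $\bC$, normalizing the matrix $S$ of \cite[Lemma 5.1]{MR3797197} to the identity, and using that scheme-theoretic images commute with the flat base change $\bR\to\bC$ (Proposition \ref{prop:bc_image}, together with Proposition \ref{prop:flatbc} for the contraction algebra itself). Your general-$k$ framework can absorb this, but the step has to be stated; as written the proposal proves agreement only with a hypothetical ``BHS construction over $k$'' rather than with the actual real definition in \cite{MR3797197}.
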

	
	On the course of its proof, we interpret the Zariski closure in \cite{MR3797197} as the scheme-theoretic closure, which leads us to a direct construction of contraction families of real groups in the fashion of \cite{MR3797197} for complex groups. 
	
	\begin{rem}
		The group scheme $G_t$ in \cite{MR3797197} is recovered from $\bfG$ as the localization of $\bfG$ by $t$ in the sense of \cite[Chapitre premier, (2.2.4.1)]{MR217083} by Proposition \ref{prop:bc_image} and Example \ref{ex:closed_immersion} (see also Corollary \ref{cor:comparisonwithbhs}). Since $\bfG$ is defined without $G_t$ in our approach, we would like to write $\bfG_t$ for $G_t$. We apply similar notations to our generalized settings in this paper.
	\end{rem}
	
	In our definition of contraction families, the compatibility of the complex and real settings is verified in a more general context:
	
	\begin{prop}[Propositions \ref{prop:flatbc}, \ref{prop:flatbc_hopf}, Theorem \ref{thm:smooth} (1)]
		The contraction of arbitrary (resp.~smooth) commutative $k$-algebras with involutions and of their Hopf algebra structures in Theorem \ref{thm:summary} (3) commutes with flat (resp.~arbitrary) base changes.
	\end{prop}
	
	For simplicity, we only work over the polynomial ring $k\left[t\right]$. One can work over the projective line like \cite{MR3797197} if one wishes by working over $k\left[t^{-1}\right]$ and gluing the algebras over $k\left[t^{\pm 1}\right]$ as pointed out in \cite[Remark 3.2.1]{MR4130851} (see Remark \ref{rem:gluing} for details). Similar results still hold since the statements are local with respect to the two principal open subsets of the projective line.
	
	One may also replace $k\to A$ with any affine morphism over $\bZ\left[1/2\right]$ since the statements so far are local in the base. The affine hypothesis for the structure morphism can not be removed since we do not have $\theta$-stable affine open covering in general when the base is affine. We still try to discuss the local behavior of the contraction. Namely, we study the contraction algebra attached to a localization of an algebra with an involution:
	
	\begin{prop}[Variant \ref{var:localization}]
		Let $(A,\theta)$ be a commutative algebra with an involution over a commutative ring $k$ with $1/2$, and $f\in A$. If $\theta(f)=f$ (resp.~$\theta(f)=-f$) then $\theta$ extends to an involution of the localization $A_f$. Moreover, the contraction algebra of $A_f$ is isomorphic to the localization of the contraction algebra $\bfA$ of $A$ by $f$ (resp.~$\sqrt{t}f$) if $\theta(f)=f$ (resp.~$\theta(f)=-f$).
	\end{prop}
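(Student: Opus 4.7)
The strategy is to combine functoriality of the contraction construction $(A, \theta) \mapsto \bfA$ with the identification of $\bfA \otimes_{k[t]} k\left[\sqrt{t}\right]$ with the extended Rees algebra of $A$, and to conclude via faithfully flat descent along $k[t] \to k\left[\sqrt{t}\right]$.

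First, I would extend $\theta$ to $A_f$: since $\theta(f) = \pm f$ is a unit in $A_f$, the universal property of localization yields a unique $k$-algebra extension $\theta_f$, which is an involution because its square extends $\theta^2 = \id$. Next, I isolate the element of $\bfA$ at which to localize. When $\theta(f) = f$, the element $f$ already lies in the $\theta$-fixed part of $A$, hence in $\bfA$. When $\theta(f) = -f$, I view $\bfA \otimes_{k[t]} k\left[\sqrt{t}\right]$ as the extended Rees algebra inside $A\left[\sqrt{t}^{\pm 1}\right]$ equipped with the descent involution $a \otimes \sqrt{t}^m \mapsto (-1)^m \theta(a) \otimes \sqrt{t}^m$; then $\sqrt{t} f$ is fixed, as both factors are anti-invariant, and so it descends canonically to an element of $\bfA$. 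Write $g \in \bfA$ for this element, and $\bfB$ for the contraction algebra of $(A_f, \theta_f)$. Functoriality applied to $A \to A_f$ produces a $k[t]$-algebra morphism $\bfA \to \bfB$; I claim the image of $g$ is a unit. For $g = f$ this is immediate. For $g = \sqrt{t} f$, the ideal of $A_f$ generated by $(A_f)^-$ contains the unit $f$ and thus equals $A_f$; Theorem \ref{thm:summary} (1) then forces $\bfB/(t) = 0$, so $t$ is a unit in $\bfB$, whence $(\sqrt{t} f)^2 = t f^2$ is a unit (as $f$ is) and therefore so is $\sqrt{t} f$. The universal property of localization yields a canonical morphism $\Phi \colon \bfA[g^{-1}] \to \bfB$.

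It remains to show that $\Phi$ is an isomorphism, which I would do by base-changing to $k\left[\sqrt{t}\right]$, a free (hence faithfully flat) module of rank two over $k[t]$. Writing $R(A)$ and $R(A_f)$ for the extended Rees algebras, the source of $\Phi$ becomes $R(A)[g^{-1}]$ and the target becomes $R(A_f)$, both sitting inside $A_f\left[\sqrt{t}^{\pm 1}\right]$. In the case $\theta(f) = f$, one has $I_f = I A_f$ (because $(A_f)^\pm = (A^\pm)_f$), and a direct check gives $R(A)[f^{-1}] = A_f\bigl[\sqrt{t}, I_f/\sqrt{t}\bigr] = R(A_f)$. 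In the case $\theta(f) = -f$, $I_f = A_f$, so $R(A_f) = A_f\left[\sqrt{t}^{\pm 1}\right]$; on the source side, $R(A)[(\sqrt{t}f)^{-1}] = R(A)\bigl[\sqrt{t}^{-1}, f^{-1}\bigr]$, and $R(A)\left[\sqrt{t}^{-1}\right]$ already coincides with $A\left[\sqrt{t}^{\pm 1}\right]$ (the fractional terms $I/\sqrt{t}^n$ are absorbed once $\sqrt{t}^{-1}$ is available), giving again $A_f\left[\sqrt{t}^{\pm 1}\right]$. Faithfully flat descent then promotes the isomorphism $\Phi \otimes_{k[t]} k\left[\sqrt{t}\right]$ to an isomorphism $\Phi$.

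The main obstacle is the case $\theta(f) = -f$, where the localizing element $\sqrt{t} f$ genuinely mixes the $A$-direction with the Rees-grading direction. In particular its invertibility in $\bfB$ is not visible from $A_f$ alone but relies on the input from Theorem \ref{thm:summary} (1) that $\bfB/(t)$ vanishes, which in turn uses that $I_f$ is the unit ideal of $A_f$; once this is in place, the remaining work is bookkeeping with the Rees algebras.
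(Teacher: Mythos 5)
Your proof is correct, but it takes a genuinely different route from the paper's. The paper constructs the forward map $\bfp:\bfA\to\bfB$ exactly as you do, checks that $\bfp(\bff)$ is a unit (for $f\in A^{-\theta}$ it simply exhibits the inverse $\frac{1}{\sqrt{t}}p(f)^{-1}\in\bfB$), and then builds an explicit two-sided inverse $\beta:\bfB\to\bfA_{\bff}$, verifying $\alpha\circ\beta=\id$ and $\beta\circ\alpha=\id$ by a four-case computation on generators $\frac{a}{f^n}$ (after arranging the exponents to be even). You instead construct only the forward map and prove it is an isomorphism after the faithfully flat base change $k\left[t\right]\to k\left[\sqrt{t}\right]$, where both sides become localizations of extended Rees algebras inside $A_f\left[\sqrt{t}^{\pm 1}\right]$; this replaces the paper's bookkeeping with a cleaner structural argument, at the cost of invoking the identification \eqref{eq:rees}. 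That identification is stated in the paper only inside the proof of Theorem \ref{thm:smooth}, under a smoothness hypothesis, but its derivation uses only \eqref{eq:bfA} and \eqref{eq:I^n} and is valid for arbitrary $(A,\theta)$, so your use of it is legitimate once you note this.

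Two small repairs. First, your invertibility argument for $\sqrt{t}f$ cites Theorem \ref{thm:summary} (1), which is stated under the hypothesis that the algebra is smooth over $k$ and so does not formally apply to $A_f$ here; the hypothesis-free statement you actually need is Proposition \ref{prop:fiber_at_t=0} (with $I_f=A_f$ all graded pieces $I_f^n/I_f^{n+1}$ vanish). Better still, the detour through $\bfB/(t)=0$ is unnecessary: $f^{-1}\in (A_f)^{-\theta}$, so $\frac{1}{\sqrt{t}}f^{-1}\in\bfB$ is visibly a two-sided inverse of $\sqrt{t}f$. Second, your justification that $\sqrt{t}f\in\bfA$ via a ``descent involution'' on $\bfA\otimes_{k\left[t\right]}k\left[\sqrt{t}\right]$ is more than is needed (and the extension $k\left[t\right]\subset k\left[\sqrt{t}\right]$ is not Galois, so ``descends canonically'' would require its own argument): one has $\sqrt{t}f=t\cdot\frac{1}{\sqrt{t}}f$ with $\frac{1}{\sqrt{t}}f$ a generator of $\bfA$ by definition. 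Neither point affects the validity of the overall argument.
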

	
	We discuss this because we can glue up the affine contraction schemes locally to give global contraction schemes if a nice $\theta$-stable affine open covering exists (cf.~Example \ref{ex:sl_2}).
	
	Our algebraic approach allows us to apply the operation of contraction twice if we are given mutually commutative two involutions (double contraction). We prove an analogous result to Proposition \ref{prop:comparison} (see Corollary \ref{cor:schematic_closure_double}).
	
	As an application of removal of the group structure in our construction and the functoriality of $\bfA$, we should be able to define the contraction of group actions on schemes. This is guaranteed by the preservation of tensor products which holds under mild assumptions:
	
	\begin{prop}[Proposition \ref{prop:monoidal}, Theorem \ref{thm:summary} (2), Example \ref{ex:flat_field}]
		The assignment $(A,\theta)\rightsquigarrow \bfA$ respects the tensor product if either $k$ is a field or $A$ is smooth over $k$.
	\end{prop}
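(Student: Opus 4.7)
The plan is to construct a natural $k[t]$-algebra map $\Phi\colon \bfA\otimes_{k[t]}\bfB\to\boldsymbol{A\otimes_k B}$ and show it is an isomorphism by faithfully flat descent along $k[t]\to k[\sqrt{t}]$.

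The morphism $\Phi$ arises from functoriality. The canonical inclusions $A\to A\otimes_k B$, $a\mapsto a\otimes 1$, and $B\to A\otimes_k B$, $b\mapsto 1\otimes b$, commute with the involutions (since $\theta_A(1)=\theta_B(1)=1$, the tensor involution $\theta_A\otimes\theta_B$ restricts to $\theta_A$ and $\theta_B$ on each factor); by the functoriality of the contraction assignment, these induce $k[t]$-algebra homomorphisms $\bfA\to\boldsymbol{A\otimes_k B}$ and $\bfB\to\boldsymbol{A\otimes_k B}$, which assemble into $\Phi$ via the universal property of the tensor product.

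To verify $\Phi$ is an isomorphism, faithful flatness of $k[t]\to k[\sqrt{t}]$ reduces the problem to showing $\Phi\otimes_{k[t]}k[\sqrt{t}]$ is an isomorphism. After this base change, the identification of $\bfA\otimes_{k[t]}k[\sqrt{t}]$ with the extended Rees algebra (from the introductory discussion and Definition \ref{defn:contraction}) identifies $\Phi\otimes k[\sqrt{t}]$ with the natural map from the tensor product (over $k[\sqrt{t}]$) of the extended Rees algebras of $A$ and $B$ to the extended Rees algebra of $A\otimes_k B$, all viewed as subalgebras of $(A\otimes_k B)\otimes_k k[\sqrt{t}^{\pm1}]$. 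This is where the hypothesis enters: flatness of $\bfA$ and $\bfB$ over $k[t]$ (known under the smooth hypothesis by Theorem~\ref{thm:summary}(2), and under the field hypothesis since $\bfA$ embeds into the free $k[\sqrt{t}^{\pm1}]$-module $A\otimes_k k[\sqrt{t}^{\pm1}]$, hence is torsion-free over the PID $k[t]$) ensures that $\bfA\otimes_{k[t]}\bfB\otimes_{k[t]}k[\sqrt{t}]$ also embeds into $(A\otimes_k B)\otimes_k k[\sqrt{t}^{\pm1}]$, so that the comparison with $\boldsymbol{A\otimes_k B}\otimes_{k[t]}k[\sqrt{t}]$ becomes a comparison of two subalgebras of a common ring.

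The generator-level comparison uses the decompositions $(A\otimes_k B)^+=A^+\otimes B^++A^-\otimes B^-$ and $(A\otimes_k B)^-=A^+\otimes B^-+A^-\otimes B^+$, together with the identity $I_{A\otimes_k B}=I_A\otimes B+A\otimes I_B$ for the ideals generated by the antisymmetric parts. The main obstacle is the bookkeeping check that the two subalgebras of $(A\otimes_k B)\otimes_k k[\sqrt{t}^{\pm1}]$ coincide; the nontrivial point is that ``cross-type'' products of generators coming from $\bfA\otimes k[\sqrt{t}]$ and $\bfB\otimes k[\sqrt{t}]$ reproduce, after multiplication by $t\in k[\sqrt{t}]$, exactly the $A^-\otimes B^-$ contribution to the $(A\otimes_k B)^+$ part of the extended Rees algebra of $A\otimes_k B$, which appears directly on the target side.
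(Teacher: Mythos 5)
Your proposal is correct, and its two load-bearing steps --- matching generators inside a common overring, and using flatness of a contraction algebra over $k[t]$ to obtain injectivity --- are exactly the two steps of the paper's proof (Proposition \ref{prop:monoidal} (1) and (5), combined with Theorem \ref{thm:smooth} and Example \ref{ex:flat_field}). The packaging differs. The paper does not descend: it defines the comparison map $i_{A,B}$ directly into $A[\sqrt{t}^{\pm 1}]\otimes_{k[\sqrt{t}^{\pm 1}]}B[\sqrt{t}^{\pm 1}]$, checks by hand that the generators of $\bfA\otimes_{k[t]}\bfB$ and of the contraction algebra of $(A\otimes_k B,\theta\otimes\eta)$ generate the same subalgebra (the only nontrivial identity being $A^{-\theta}\otimes B^{-\eta}=t\cdot\frac{1}{\sqrt{t}}A^{-\theta}\otimes\frac{1}{\sqrt{t}}B^{-\eta}$, i.e.\ your ``cross-type'' term), and then factors $i_{A,B}$ as $\bfA\otimes_{k[t]}\bfB\to\bfA\otimes_{k[t]}B[\sqrt{t}^{\pm 1}]\cong A[\sqrt{t}^{\pm 1}]\otimes_{k[\sqrt{t}^{\pm 1}]}B[\sqrt{t}^{\pm 1}]$ to deduce injectivity from Condition \ref{cond:flat}. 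Your detour through the faithfully flat extension $k[t]\to k[\sqrt{t}]$ and the extended Rees algebras of \eqref{eq:rees} buys a more conceptual description of the target (the identity $I_{A\otimes_k B}^n=\sum_{i+j=n}I_A^i\otimes I_B^j$ replaces the generator bookkeeping of \eqref{eq:bfA}) at the cost of one extra identification; both routes are sound. One point to tighten: you invoke flatness of \emph{both} $\bfA$ and $\bfB$ over $k[t]$, but under the hypothesis ``$A$ is smooth over $k$'' with $B$ arbitrary only $\bfA$ is known to be flat. Fortunately one factor suffices for the embedding you need: tensor the inclusion $\bfB\hookrightarrow B[\sqrt{t}^{\pm 1}]$ with the flat $k[t]$-module $\bfA$ and then apply Lemma \ref{lem:relation}; this is precisely how the paper argues Proposition \ref{prop:monoidal} (5), and it recovers the stated proposition in full strength.
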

	For a digression, we compute the differential structure of the contraction of affine group schemes and their actions (Proposition \ref{prop:liealg}, Corollary \ref{cor:diffaction}). In particular, we guarantee that the spectrum $\bfG=\Spec\bfA$ of the contraction of a commutative Hopf algebra $A$ is a group lift of the contraction $\bfg$ of the Lie algebra of $G=\Spec A$ in \cite[Section 2.1.3]{MR4130851}.
	
	As a typical example of group actions on affine schemes in representation theory, one can think of the contraction families attached to symmetric varieties. That is, let $G$ be a reductive algebraic group over a field $F$ of characteristic not two with an involution $\theta$, and $K$ be the fixed point subgroup by $\theta$. Then $X=G/K$ is an affine variety over $F$, equipped with a natural involution (the Matsushima criterion). It gives rise to a contraction family $\bfX$ over $F\left[t\right]$ by taking the spectrum of the contraction algebra of the coordinate ring of $X=G/K$. For example, this connects \textit{algebraic} models of compact and noncompact symmetric spaces by thinking of the Cartan involution over the field $\bR$ of real numbers; The manifolds of real points of the fibers at nonzero points are not Riemannian symmetric since they are disconnected in general. Indeed, let $F^s$ be the separable closure of $F$. Write $H^1(\Gamma,K(F^s))$ and $H^1(\Gamma,G(F^s))$ for the first Galois cohomology of $K$ and $G$ respectively (see \cite[Chapter I, section 5.1 and Chapter II, section 1.1]{MR1867431}). According to \cite[Chapter II, Caution of Section 6.8]{MR1102012} and \cite[Chapter I, section 5.4, Corollary 1]{MR1867431}, there is a one-to-one correspondence between the set of $G(F)$-orbits in the set $(G/K)(F)$ of $F$-points of the algebraic variety $G/K$ and the kernel of the canonical map
	\[H^1(\Gamma,K(F^s))\to H^1(\Gamma,G(F^s))\]
	of pointed sets. For explicit computation of the Galois cohomology in the case of $F=\bR$, see \cite{MR3545963} and the referred papers therein.
	
	\begin{ex}\label{ex:galois}
		Put $F=\bR$ and $G=\SL_n$ with $n\geq 1$. Set $\theta=((-)^T)^{-1}$, where $(-)^T$ denotes the transpose of matrices. Then the fiber of $\bfX$ at $t=1$ (resp.~$t=-1$) is identified with $\SL_n/\SO(n)$ (resp.~$\SU(n)/\SO(n)$). One can see that \[(\SU(n)/\SO(n))(\bR)=\SU(n,\bR)/\SO(n,\bR).\]
		On the other hand, $(\SL_n/\SO(n))(\bR)$ has $\left\lfloor\frac{n}{2}\right\rfloor+1$ connected components. To understand them more concretely, put $n=2$. Then $(\SU(2)/\SO(2))(\bR)$ is the complex projective line, and $(\SL_2/\SO(2))(\bR)$ is the disjoint union of the upper and lower half planes.
	\end{ex}
	
	As pointed out in \cite{MR3545963}, none of known approaches to the computation of the Galois cohomology give us a clear description of the kernel except some special cases. To see its difficulties, let us note an easy fact: For a given possibly disconnected real reductive algebraic group $G$ with the trivial Cartan involution in the sense of \cite{MR3786301} and an algebraic subgroup $H$, we have a canonical bijection
	\[G(\bR)/H(\bR)\cong(G/H)(\bR).\]
	In particular, the kernel is trivial even if $G$ and $H$ have nontrivial Galois cohomology. For example, the cardinality of the Galois cohomology of $\SO(4)$ is smaller than that of the subgroup $\SO(2)\times \SO(2)$.

	One can connect more symmetric varieties by thinking of other involutions. For instance, we can connect the real symmetric varieties attached to commuting Galois actions on a given complex reductive algebraic group with respect the Galois group of the extension $\bC/\bR$ (cf.~\cite[Theorem 5.2, Example 5.1]{MR3797197}). We note that if one is interested in symmetric spaces of real Lie groups, a simple solution to the component issue is to take the unit component. For example, this allows us to only pick up the upper half plane in Example \ref{ex:galois}.
	
	As for a formalism to connect symmetric varieties, it is a natural question to compare $\bfX$ with the fppf quotient $\bfG/\bfK$ in the sense of \cite{MR0237513}:
	
	\begin{thm}[Theorem \ref{thm:G/K}]
		The base point $x_0=K$ of $X$ naturally extends to an $F\left[t\right]$-point $\bfx_0$ of $\bfX$. Moreover, the $\bfG$-orbit containing $\bfx_0$ is affine open in $\bfX$, and it is isomorphic to $\bfG/\bfK$.
	\end{thm}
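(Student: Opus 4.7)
The plan is to exploit functoriality of the contraction $(A,\theta)\rightsquigarrow \bfA$ together with the monoidality of Proposition~\ref{prop:monoidal} to construct $\bfx_0$ and the $\bfG$-action on $\bfX$, and then to identify $\bfG/\bfK \to \bfX$ as an open immersion by a fiberwise check. For the first assertion, the base point $x_0=eK\in X^\theta$ corresponds to the augmentation $\epsilon\colon \cO(X)\to F$, which is $\theta_X$-equivariant for the trivial involution on $F$. Since the contraction of $(F,\id_F)$ is $F[t]$ (e.g., from Theorem~\ref{thm:summary}(1) with $I=0$, or directly from Definition~\ref{defn:contraction}), functoriality of the contraction promotes $\epsilon$ to an $F[t]$-algebra map $\bfA\to F[t]$, i.e., to an $F[t]$-point $\bfx_0\colon \Spec F[t]\to \bfX$ extending $x_0$.

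Next I would construct the $\bfG$-action on $\bfX$ and the orbit map. The action $G\times X\to X$ is $\theta$-equivariant, yielding a coaction $\cO(X)\to \cO(G)\otimes_F \cO(X)$ of algebras with involution. Because $F$ is a field, Proposition~\ref{prop:monoidal} identifies the contraction of this tensor product with $\bfG\otimes_{F[t]} \bfX$, so functoriality yields $\bfX\to \bfG\otimes_{F[t]} \bfX$, i.e., an action $\bfG\times_{F[t]} \bfX\to \bfX$ over $F[t]$. Composition with $\bfx_0$ defines the orbit map $a\colon \bfG\to \bfX$. Similarly, the surjection of Hopf algebras $\cO(G)\twoheadrightarrow \cO(K)$ (involution-equivariant, since $\theta$ restricts to the identity on $K$) yields a closed immersion $\bfK\hookrightarrow \bfG$; surjectivity is preserved under contraction because after the faithfully flat base change $F[t]\to F[\sqrt{t}]$ it is the surjection of extended Rees algebras attached to a surjection of $\bZ/2$-graded algebras. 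Since $K$ stabilizes $x_0$ in $X$, functoriality gives $\bfK\cdot \bfx_0=\bfx_0$, so $a$ descends to $\bar a\colon \bfG/\bfK\to \bfX$; here $\bfG/\bfK$ exists as a smooth $F[t]$-algebraic space (in fact a scheme, as will follow a posteriori from its identification with an open of the affine scheme $\bfX$) since $\bfK$ is a smooth closed subgroup of $\bfG$ over $F[t]$.

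It remains to prove that $\bar a$ is an open immersion, whose image is then automatically the $\bfG$-orbit of $\bfx_0$. Both $\bfG/\bfK$ and $\bfX$ are smooth over $F[t]$ by Theorem~\ref{thm:summary}(2), so $\bar a$ is flat and locally of finite presentation; by the standard fiberwise criterion (EGA IV 17.9.5), it suffices to check that $\bar a_s$ is an open immersion for every $s\in \Spec F[t]$. For $s$ not lying over $(t)$: the image of $t$ in the residue field $\kappa(s)$ is invertible, call it $t_0$, and Proposition~\ref{prop:fiber_descent}(3) (applied to $\cO(G)$, $\cO(K)$, $\cO(X)$) identifies $\bfG_s, \bfK_s, \bfX_s$ with $G_{\kappa(s)}, K_{\kappa(s)}, X_{\kappa(s)}$ after base change along the quadratic étale cover $\kappa(s)[\sqrt{t_0}]/\kappa(s)$, under which $\bar a_s$ becomes the canonical isomorphism $G/K\xrightarrow{\sim}X$; fpqc descent then yields that $\bar a_s$ itself is an isomorphism. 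For $s=(t)$: Theorem~\ref{thm:summary}(1) identifies $\bfG_0$ with the Cartan motion group $K\ltimes \fp$, $\bfK_0$ with $K$, and $\bfX_0$ with $\Spec \Sym_{\cO(X^\theta)}(I/I^2)$; the connected component through $\bfx_0$ is the normal space $\fp$ to $x_0$ in $X$, and the induced action of $K\ltimes \fp$ is the standard transitive translation-plus-conjugation action with stabilizer $K$ at the origin, so $\bar a_0$ identifies $(K\ltimes \fp)/K$ with the open subscheme $\fp\subset \bfX_0$. The main obstacle is precisely this $t=0$ fiber analysis: one has to confirm that under the isomorphism of Theorem~\ref{thm:summary}(1) applied jointly to $\cO(G)$ and $\cO(X)$, the contracted coaction recovers the Cartan motion action on the symmetric algebra of the conormal bundle. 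I would handle this by unwinding the filtration underlying the contraction construction and matching associated gradeds on both sides.
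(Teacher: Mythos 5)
Your proposal is correct and takes essentially the same route as the paper: both reduce to the fiberwise criterion of EGA IV, Cor.\ 17.9.5, identify the fibers over $t\neq 0$ with $G/K\to X$ after a quadratic Galois base change killing $\sqrt{t}$, and identify the fiber at $t=0$ with the Cartan motion group $K\ltimes\underline{\fp}$ acting on the normal bundle of the finite \'etale fixed-point scheme $X^\theta$ (Proposition \ref{prop:finetale}). The only differences are organizational---the paper first proves $Z_{\bfG}(\bfx_0)=\bfK$ by the same fiberwise argument and then applies the criterion to the orbit map $\bfG/Z_{\bfG}(\bfx_0)\to\bfX$, whereas you apply it directly to $\bfG/\bfK\to\bfX$---and the $t=0$ identification of the contracted coaction with the Cartan motion action, which you flag as the main remaining work, is asserted at the same level of detail in the paper's own proof.
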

	
	If $F=\bR$, we can again take the unit components of the manifolds of real points fiberwisely to obtain the same family whose fibers at nonzero points are symmetric spaces (Corollary \ref{cor:symmetric_space}).
	
	More generally, we introduce the quotient scheme of contraction families attached to a pair of algebraic groups with compatible involutions (the beginning of Section \ref{sec:quotient}). As another extremal example of this quotient, we discuss the case of $\theta$-stable parbaolic subgroups $Q$ when $G$ is connected reductive. We can come up with two other possible families related to $G/Q$. To explain the first candidate, let $\fg$ and $\fq$ be the Lie algebras of $G$ and $Q$ respectively. Notice that $G/Q$ can be identified with the $G$-orbit in the Grassmannian of $\fg$ containing $\fq$ if $G$ is of type (RA) in the sense of \cite[D\'efinition 5.1.6]{MR0218363}, for example, if the characteristic of $F$ is zero or $G$ is connected semisimple of adjoint type (see \cite[Remarques 5.1.7 and Proposition 5.1.3]{MR0218363}). As its contraction analog, we can define the $\bfG$-orbit of the Grassmannian $\Gr(\bfg)$ of $\bfg$ containing $\bfq$ (if it is represented by a scheme).
	
	The second candidate is to follow the idea of \cite{MR3797197}. I.e., we embed $\bfG_t/\bfQ_t$ to $\Gr(\bfg)$ and take the scheme-theoretic closure $\overline{\bfG_t/\bfQ_t}$ in the sense of \cite[Chapter 2, Exercise 3.17]{MR1917232} (see Definitions \ref{defn:img}, \ref{defn:closure}). It is naturally equipped with an action of $\bfG$ (Proposition \ref{prop:action}). They are related as follows:
	
	\begin{thm}[Propositions \ref{prop:action}, \ref{prop:point}, Corollary \ref{cor:orbit}]
		Assume the following conditions:
		\begin{enumerate}
			\renewcommand{\labelenumi}{(\roman{enumi})}
			\item $G$ is simply connected.
			\item $G$ and $K$ are of type $\mathrm{(RA)}$.
			\item Every geometric fiber of $Q\cap K$ admits a maximal torus $T$ such that the differential of each nonzero weight of the geometric fiber of $\fp$ is nonzero.
		\end{enumerate}
		Then:
		\begin{enumerate}
			\renewcommand{\labelenumi}{(\arabic{enumi})}
			\item The closed subscheme $\overline{\bfG_t/\bfQ_t}\subset\Gr(\bfg)$ is $\bfG$-invariant.
			\item The fppf quotient $\bfG/\bfQ$ coincides with the $\bfG$-orbit in $\Gr(\bfg)$ attached to $\bfq$. Moreover, $\bfG/\bfQ$ is represented by a smooth quasi-compact separated scheme over $F\left[t\right]$.
			\item The orbit map $\bfG/\bfQ\hookrightarrow\Gr(\bfg)$ factors through $\overline{\bfG_t/\bfQ_t}$.
		\end{enumerate}
	\end{thm}
	
	We remark that we can still work nicely without the simply connected assumption, but then the centralizer is slightly different from $\bfQ$ in general. We resolve this problem by taking the unit component $\bfG^\circ$ of $\bfG$ in the sense of \cite[D\'efinition 3.1]{MR0234961}. We need the third condition for good behavior of the centralizer subgroup at $t=0$. We can also work over an arbitrary ground ring with $1/2$, but we need to assume the nonvanishing condition of (iii) geometric fiberwisely. The general statement is given in Theorem \ref{thm:stabilizer}. In particular, the quotient $\bfG^\circ/\bfQ^\circ$ is representable under reasonable conditions on the characteristics of residue fields. As an application, we will be able to obtain a contraction analog of arithmetic models of $A_{\fq}(\lambda)$-modules through the theory of twisted D-modules over schemes in \cite{hayashijanuszewski}. The author is studying its structure in progress.

	\section*{Acknowledgments}
	
	I am grateful to Eyal Subag for the answer to my question in the online seminar ``Suita Representation Theory Seminar'' on adding parameters of contraction. Corollary \ref{cor:schematic_closure_double} is due to him. I also thank him for pointing out some typos of a draft of this paper.
	
	I am grateful to Professor Hisayosi Matumoto for stimulating comments. 
	
	Thanks Yoshiki Oshima for discussions in an early stage of this work.
	
	This work was supported by JSPS KAKENHI Grant Number 21J00023.
	
	\section*{Organization of this paper}
	
	In Section \ref{sec:alg}, we study the general formalism of contraction algebras $\bfA$ and their spectra $\bfX=\Spec\bfA$. In Section \ref{sec:hopf}, we lift the structures of Hopf algebras and cogroup coactions on given commutative algebras with involutions to those on contraction algebras. Their differential structures are also studied. In Section \ref{sec:quotient}, we compare quotient schemes of contraction group schemes and other related objects. In Appendix \ref{sec:sch_img}, a short discussion on scheme-theoretic images is given.
	
	\section*{Notation}
	
	We denote the ring of integers by $\bZ$. Let $\bR$ (resp.~$\bC$) denote the field of real (resp.~complex) numbers.
	
	Let $k$ be a commutative ring. We refer to the group of units of $k$ as $k^\times$. Write $\id_k$ for the identity map of $k$. For $k$-modules $M$ and $N$, write $\Hom_k(M,N)$ for the $k$-module of $k$-homomorphisms from $M$ to $N$.
	
	Let $A$ be a commutative ring, and $M$ be an $A$-module. For a nonnegative integer $n$, write $\Sym^n_A M$ for the $n$th symmetric product of $M$ as an $A$-module. Let $\Sym_A M$ be the symmetric algebra of $M$ over $A$.
	
	Let $k$ be a commutative ring. For a (small) set $\Lambda$, write $\bA^\Lambda_k$ for the affine $\Lambda$-space over $k$, i.e., $\bA^\Lambda_k=\Spec k\left[x_\lambda:~\lambda\in\Lambda\right]$. For a finitely generated and projective $k$-module $V$, let $\Gr(V)$ denote the Grassmanian scheme of $V$, i.e., the disjoint union of the Grassmanian schemes of all ranks (see \cite[Section (8.6)]{MR4225278}). This is a projective $k$-scheme by \cite[Remarks 8.24, Example 13.69, and the closed immersion (8.8.8) of Remark 8.21]{MR4225278}. For a $k$-module $M$, we denote the copresheaf $\underline{M}$ of abelian groups on the category of commutative $k$-algebras by
	$\underline{M}(R)=M\otimes_k R$.
	We remark that $\underline{M}$ is represented by the affine $k$-scheme $\Spec \Sym_k \Hom_k(M,k)$ if $M$ is finitely generated and projective as a $k$-module.
	
	For a scheme $S$, we denote its structure sheaf by $\cO_S$. For a morphism
	\[f:X\to Y\]
	of schemes, write $f^\sharp:\cO_Y\to f_\ast\cO_X$ for its structure homomorphism. For a (separated) scheme $X$ over a commutative ring with an involution $\theta$, let $X^\theta$ denote the (closed) subscheme of $\theta$-fixed points as in \cite[Lemma 3.1.1]{MR4627704}. For a smooth affine group scheme $G$ over a commutative ring, let $G^\circ$ denote the unit component of $G$ in the sense of \cite[D\'efinition 3.1]{MR0234961}. It agrees with the open subscheme of $G$ attached to the unit section defined in \cite[Corollaire (15.6.5)]{MR0217086} by \cite[Cas particulier 3.4 and the proof of Th\'eor\`eme 3.10]{MR0234961}. 
	
	For a pair of a commutative ring $A$ and an ideal $I\subset A$, define its extended Rees algebra $A\left[u,Iu^{-1}\right]$ as the subring
	$A\left[u\right]\oplus \oplus_{n\geq 1} I^n u^{-n}
	\subset A\left[u^{\pm 1}\right]$.
	Similarly, for a pair of scheme $X$ and an ideal sheaf $\cI\subset\cO_X$, define its extended Rees algebra as
	$\cO_X\left[u\right]\oplus \oplus_{n\geq 1} \cI^n u^{-n}\subset \cO_X\left[u^{\pm 1}\right]$.
	
	Let $\Proj$ denote the functor of the projective spectrum . For a commutative nonnegatively graded ring $A$ and an integer $n$, write $\cO_{\Proj A}(n)$ for the $n$th Serre twist of $\cO_X$.
	
	For a group $G$, equipped with an involution $\theta$, we denote its first group cohomology by $H^1(\theta,G)$ (\cite[Chapter I, section 5.1]{MR1867431}). For a homomorphism $H\to G$ of groups with compatible involutions $\theta$, we denote the kernel of the map between their first group cohomology by $\Ker(H^1(\theta,H)\to H^1(\theta,G))$.
	
	For a set $E$, equipped with an action of a group $\Gamma$, we denote its fixed point part by $E^\Gamma$.
	
	For a manifold $X$, let $\pi_0(X)$ denote the set of its path components.
	
	\section{Contraction algebras}\label{sec:alg}
	Let $k$ be a commutative $\bZ\left[1/2\right]$-algebra. Let $A$ be a commutative $k$-algebra with an involution $\theta$. Henceforth write
	\[\begin{array}{cc}
		A^{\theta}=\{a\in A:~\theta(a)=a\},&A^{-\theta}=\{a\in A:~\theta(a)=-a\}.
	\end{array}\]
	Since $2\in A^\times$, we have a decomposition $A=A^{\theta}\oplus A^{-\theta}$. Write
	$X=\Spec A$. For a positive integer $n\geq 1$, write $(A^{-\theta})^n$ for the $A^{\theta}$-submodule of $A$ spanned by products of $n$ elements of $A^{-\theta}$. For convention, we set $(A^{-\theta})^0\coloneqq A^{\theta}$.
	
	The symbol $t$ will be the variable for the parameter of contractions.
	
	\begin{defn}\label{defn:contraction}
		\begin{enumerate}
			\renewcommand{\labelenumi}{(\arabic{enumi})}
			\item Write $\bfA$ for the $k\left[t\right]$-subalgebra of $A\left[\sqrt{t}^{\pm 1}\right]$ generated by $A^{\theta}$ and $\frac{1}{\sqrt{t}}A^{-\theta}$. Explicitly,
			\begin{equation}
				\bfA=A^{\theta}\left[t\right]
				\oplus \left(\oplus_{n\geq 0}A^{-\theta}\frac{t^n}{\sqrt{t}}\right)
				\oplus \oplus_{n\geq 2} \frac{1}{\sqrt{t}^n} (A^{-\theta})^n
				\subset A\left[\sqrt{t}^{\pm 1}\right],\label{eq:bfA}
			\end{equation}
			We call $\bfA$ the contraction algebra.
			\item Let $\bfA_t$ be the localization of $\bfA$ by $t$, i.e., $\bfA_t=\bfA\otimes_{k\left[t\right]} k\left[t^{\pm 1}\right]$.
			\item Write $\bfX=\Spec\bfA$ and $\bfX_t=\Spec \bfA_t$.
			We call $\bfX$ the contraction scheme.
			\item For $t_0\in k$, set $\bfA_{t_0}\coloneqq \bfA_t/(t-t_0)$ and $\bfX_{t_0}=\Spec \bfA_{t_0}$.
		\end{enumerate}
		We will apply similar notations to morphisms.
	\end{defn}
	
	Though $\bfX$ is called a family in \cite{MR4123111, MR4130851, MR3797197}, we prefer to use the terminology ``scheme'' in order to emphasize scheme-theoretic perspectives.

	\begin{ex}\label{ex:trivial}
		Put $\theta=\id_A$. Then we have $\bfA=A\left[t\right]$.
	\end{ex}
	
	\begin{ex}
		Put $A=k\left[x\right]$. Define an involution $\theta$ on $A$ by $\theta(x)=-x$. Then one has $\bfA=k\left[x^2,t,\frac{x}{\sqrt{t}}\right]$ (see Proposition \ref{prop:generatorofbfA} if necessary). More simply, $y\mapsto \frac{x}{\sqrt{t}}$ determines an isomorphism $\bfA\cong k\left[t,y\right]$ of $k\left[t\right]$-algebras. Likewise, put $B=k\left[x^{\pm 1}\right]$ with $\eta(x)=-x$. Then we have \[\bfB=k\left[x^{\pm 2},t,\frac{x}{\sqrt{t}}\right]
		\cong k\left[x^2,t,\frac{x}{\sqrt{t}}\right]_{\sqrt{t}x}
		\cong k\left[t,y\right]_{ty}\]
		(use Variant \ref{var:localization} if necessary for the middle isomorphism). In particular, $\bfB_0$ is zero. Its geometric counterpart is the fact that $\Spec\eta$ acts freely on $Y=\Spec k\left[x^{\pm 1}\right]$ (see Theorem \ref{thm:smooth} (2))
	\end{ex}

	\begin{ex}[double contraction]
		Let $\eta$ be another involution of $A$ over $k$ commuting with $\theta$. Let $\bfA(\theta)$ be the contraction algebra attached to $(A,\theta)$. Then $\eta$ naturally extends to an involution of $\bfA(\theta)$, which we denote by the same symbol $\eta$. Write $\bfA(\theta,\eta)$ for the contraction algebra attached to $(\bfA,\eta)$. We call $\bfA(\theta,\eta)$ the double contraction of $A$ attached to $(\theta,\eta)$. One can define multi-contractions in a similar way.
	\end{ex}
	
	Let us note fundamental relations of the algebras appearing above:
	
	\begin{lem}\label{lem:relation}
		\begin{enumerate}
			\renewcommand{\labelenumi}{(\arabic{enumi})}
			\item In $\bfX$, the open subscheme $\bfX_t$ is scheme-theoretically dense.
			\item The containment $\bfA\subset A\left[\sqrt{t}^{\pm 1}\right]$ induces an isomorphism of $k\left[t^{\pm 1}\right]$-algebras from $\bfA_t$ onto
			\begin{equation}
				A^{\theta}\left[t^{\pm 1}\right]\oplus \frac{1}{\sqrt{t}}A^{-\theta}\otimes_k k\left[t^{\pm 1}\right]
				\subset A\left[\sqrt{t}^{\pm 1}\right].\label{eq:A_t}
			\end{equation}
			\item The map $\bfA_t\hookrightarrow A\left[\sqrt{t}^{\pm 1}\right]$ of (1) extends to an isomorphism
			\[\bfA_t\otimes_{k\left[t^{\pm 1}\right]} k\left[\sqrt{t}^{\pm 1}\right]\cong A\left[\sqrt{t}^{\pm 1}\right]\]
			of $k\left[\sqrt{t}^{\pm 1}\right]$-algebras.
			\item For $t_0\in k^\times$, we have a natural isomorphism of $k\left[x\right]/(x^2-t_0)$-algebras
			\[\bfA_{t_0}\otimes_k k\left[x\right]/(x^2-t_0)
			\cong A\otimes_k k\left[x\right]/(x^2-t_0).\]
		\end{enumerate}
	\end{lem}
	
	\begin{proof}
		For (1) and (2), observe that we obtain a map $\bfA_t\to A\left[\sqrt{t}^{\pm 1}\right]$ from the containment $\bfA\subset A\left[\sqrt{t}^{\pm 1}\right]$ in virtue of the universal property of the localization. In particular, the localization map $\bfA\to\bfA_t$ is injective. Part (1) then follows by definition of the scheme-theoretic closure. In view of generalities on the localization of commutative rings, we also find that the map $\bfA_t\to A\left[\sqrt{t}^{\pm 1}\right]$ is injective. Its image is the $k\left[t^{\pm 1}\right]$-subalgebra of $A\left[\sqrt{t}^{\pm 1}\right]$ generated by $A^{\theta}$ and $\frac{1}{\sqrt{t}}A^{-\theta}$ by definitions. It is straightforward that it coincides with \eqref{eq:A_t}. This shows (2).
		
		Part (3) is verified by the following identification:
		\begin{flalign*}
			&A\left[\sqrt{t}^{\pm 1}\right]\\
			&=A^{\theta}\left[t^{\pm 1}\right]\oplus \frac{1}{\sqrt{t}}A^{-\theta}\otimes_k k\left[t^{\pm 1}\right]
			\oplus \frac{1}{\sqrt{t}}A^{\theta}\left[t^{\pm 1}\right]
			\oplus A^{-\theta}\otimes_k k\left[t^{\pm 1}\right]\\
			&\cong \bfA_t\oplus \frac{1}{\sqrt{t}} \bfA_t\\
			&\cong \bfA_t\otimes_{k\left[t^{\pm 1}\right]} k\left[\sqrt{t}^{\pm 1}\right].
		\end{flalign*}
		
		Take its base change by the evaluation map $\ev_{x}:k\left[\sqrt{t}^{\pm 1}\right]\to k\left[x\right]/(x^2-t_0)$ at $\sqrt{t}=x$ to obtain (4). In fact, $\ev_{x}$ is well-defined since $x$ is a unit of $k\left[x\right]/(x^2-t_0)$ by $x\cdot t^{-1}_0x=1$ in $k\left[x\right]/(x^2-t_0)$. Since the restriction of $\ev_x$ to $k\left[t^{\pm 1}\right]$ factors through the evaluation map $\ev_{t_0}$ to $k\subset k\left[x\right]/(x^2-t_0)$ at $t=\ev_x(t)=x^2=t_0$, the base change of the left hand side in (3) is isomorphic to
		\[\bfA_t\otimes_{k\left[t^{\pm 1}\right]} k \otimes_k k\left[x\right]/(x^2-t_0)
		\cong \bfA_{t_0}\otimes_k k\left[x\right]/(x^2-t_0);\]
		\[\begin{tikzcd}
			k\left[t^{\pm 1}\right]\ar[r, hook]\ar[d, "\ev_{t_0}"']
			&k\left[\sqrt{t}^{\pm 1}\right]\ar[d, "\ev_{x}"]\\
			k\ar[r, hook]& k\left[x\right]/(x^2-t_0).
		\end{tikzcd}\]
		This completes the proof.
	\end{proof}
	
	There are conceptual proofs of (3) and (4) from the perspectives of Galois theory. Recall that $k\left[t^{\pm 1}\right]\subset k\left[\sqrt{t}^{\pm 1}\right]$ is a quadratic Galois extension in the sense of \cite[Definition 1.3.1]{MR4627704} for the involution $\sqrt{t}\mapsto -\sqrt{t}$ of $k\left[\sqrt{t}^{\pm 1}\right]$. Similarly, for a unit $t_0\in k^\times$, $k\left[x\right]/(x^2-t_0)$ is a (possibly split) quadratic Galois extension of $k$ for
	$a+bx\mapsto a-bx$.
	
	\begin{prop}\label{prop:fiber_descent}
		Let $t_0\in k^\times$.
		\begin{enumerate}
			\renewcommand{\labelenumi}{(\arabic{enumi})}
			\item Define an involution on $A\left[\sqrt{t}^{\pm 1}\right]$ by
			$\sum_i a_i \sqrt{t}^i\mapsto \sum_i \theta(a_i)(-\sqrt{t})^i$.
			Then the canonical homomorphism $k\left[\sqrt{t}^{\pm 1}\right]\to A\left[\sqrt{t}^{\pm 1}\right]$ commutes with the involution. Moreover, its fixed point subalgebra agrees with \eqref{eq:A_t}.
			\item Define an involution on $A\left[x\right]/(x^2-t_0)$ by
			$a+bx\mapsto \theta(a)-\theta(b)x$.
			Then the canonical homomorphism $k\left[x\right]/(x^2-t_0)\to A\left[x\right]/(x^2-t_0)$ commutes with the involution. Moreover, its fixed point subalgebra $A^\theta\oplus A^{-\theta}x$ is naturally isomorphic to $\bfA_{t_0}$ as a commutative $k$-algebra.
			\item If we are given an element $\alpha\in k$ such that $\alpha^2=t_0$ then $\alpha$ determines a $k$-algebra isomorphism $\bfA_{t_0}\cong A$.
		\end{enumerate}
	\end{prop}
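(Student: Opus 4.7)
The plan is to address (1) by direct verification, (2) by reducing to faithfully flat descent via Lemma \ref{lem:relation}(3), and (3) by a concrete evaluation map combined with (2).

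For (1), the involution on $A\left[\sqrt{t}^{\pm 1}\right]$ is the composition of $\theta$ extended trivially to $\sqrt{t}$ with the $A$-algebra involution $\sqrt{t}\mapsto -\sqrt{t}$; these two involutions commute, so their composition is again an involution. Restricted to $k\left[\sqrt{t}^{\pm 1}\right]$ it coincides with the Galois involution because $\theta|_k=\id_k$. For the fixed subring, I would expand $f=\sum_i a_i\sqrt{t}^i$ using the uniqueness of Laurent coefficients. The involution takes this to $\sum_i(-1)^i\theta(a_i)\sqrt{t}^i$, so $f$ is fixed if and only if $a_i\in A^\theta$ for even $i$ and $a_i\in A^{-\theta}$ for odd $i$. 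Grouping even and odd powers and using $\sqrt{t}^{2j+1}=t^j\sqrt{t}=t^{j+1}/\sqrt{t}$ identifies the fixed subring with $A^\theta\left[t^{\pm 1}\right]\oplus \frac{1}{\sqrt{t}}A^{-\theta}\otimes_k k\left[t^{\pm 1}\right]=A_t$.

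For (2), the commutation of the canonical homomorphism with the two involutions is immediate since $\theta$ fixes $k$. To exhibit the isomorphism $A_{t_0}\cong A^\theta\oplus A^{-\theta}x$, I would construct the natural $k$-algebra map $A_{t_0}\to A\left[x\right]/(x^2-t_0)$ as the quotient of the composite $A_t\hookrightarrow A\left[\sqrt{t}^{\pm 1}\right]\to A\left[x\right]/(x^2-t_0)$, where the second arrow sends $\sqrt{t}\mapsto x$; this factors through $A_{t_0}$ because $t-t_0\mapsto x^2-t_0=0$. A direct computation shows that $A^\theta$ maps into $A^\theta$ and $\frac{1}{\sqrt{t}}A^{-\theta}$ maps onto $t_0^{-1}A^{-\theta}x=A^{-\theta}x$, so the image is precisely the fixed subring $A^\theta\oplus A^{-\theta}x$. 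For injectivity, I would invoke Lemma \ref{lem:relation}(3), which gives $A_{t_0}\otimes_k k\left[x\right]/(x^2-t_0)\cong A\left[x\right]/(x^2-t_0)$; since $k\to k\left[x\right]/(x^2-t_0)$ is faithfully flat (being free of rank two), the map $A_{t_0}\to A\left[x\right]/(x^2-t_0)$ is injective.

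For (3), I would use the $k$-algebra evaluation homomorphism $\ev_\alpha:A\left[x\right]/(x^2-t_0)\to A$, $a+bx\mapsto a+b\alpha$, which is well-defined since $\alpha^2=t_0$. Its restriction to the fixed subring $A^\theta\oplus A^{-\theta}x$ is surjective because $A=A^\theta\oplus A^{-\theta}$ and $\alpha\in k^\times$, and it is injective because any element of $\alpha A^{-\theta}$ lies in $A^{-\theta}$, so $A^\theta\cap \alpha A^{-\theta}=0$. Composing with the isomorphism from (2) yields the desired $A_{t_0}\cong A$. The main obstacle of the whole proposition is the injectivity argument in (2); the key insight is that Lemma \ref{lem:relation}(3) already encapsulates the descent needed, bypassing a separate verification of the uniqueness of representations $a+b/\sqrt{t}$ in $A_{t_0}$.
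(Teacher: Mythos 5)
Your proof is correct. Parts (1) and (3) follow the paper's own argument: the paper declares (1) ``straightforward'' (your Laurent-coefficient computation is exactly the intended verification) and obtains (3) by sending $x$ to $\alpha$, as you do. In part (2) you take a genuinely different, more hands-on route. The paper deduces (2) from (1) in one stroke by base-changing the pair consisting of $A\left[\sqrt{t}^{\pm 1}\right]$ and its semilinear involution along the quotient $k\left[\sqrt{t}^{\pm 1}\right]\to k\left[\sqrt{t}^{\pm 1}\right]/(t-t_0)$, using the fact that formation of fixed points of a quadratic Galois descent datum commutes with base change (harmless here because $2$ is invertible, so the fixed subalgebra is the image of the idempotent $\tfrac{1}{2}(1+\sigma)$); this makes $A_{t_0}=A_t\otimes_{k\left[t^{\pm 1}\right]}k$ automatically the fixed subalgebra, and only the explicit identification of that subalgebra with $A^\theta\oplus A^{-\theta}x$ is checked by hand. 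You instead construct the comparison map $A_{t_0}\to A\left[x\right]/(x^2-t_0)$ explicitly, compute its image, and import injectivity from Lemma \ref{lem:relation} (3) together with faithful flatness of $k\to k\left[x\right]/(x^2-t_0)$. This is valid, and the factorization you need (your map agreeing with the restriction of the isomorphism of Lemma \ref{lem:relation} (3) to $A_{t_0}\otimes 1$) does hold by the construction of that isomorphism as a base change of the inclusion $A_t\hookrightarrow A\left[\sqrt{t}^{\pm 1}\right]$. One remark on logical direction: the paper intends Proposition \ref{prop:fiber_descent} to furnish a conceptual re-proof of Lemma \ref{lem:relation} (2) and (3) via Galois descent, so your argument runs the dependency the other way; since Lemma \ref{lem:relation} is proved directly beforehand, there is no circularity, but the paper's route is shorter and dispenses with the separate injectivity step, while yours is more elementary and self-contained at the level of explicit elements.
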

	
	We remark that (2) and (3) generalize the latter half of \cite[Theorem 5.1]{MR3797197}. 
	
	\begin{proof}
		Part (1) is straightforward. Henceforth let us identify the subalgebra in (1) with $\bfA_t$ through Lemma \ref{lem:relation} (1). Take the base change of (1) by the quotient map $k\left[\sqrt{t}^{\pm 1}\right]\to k\left[\sqrt{t}^{\pm 1}\right]/(t-t_0)$, and replace the symbol $\sqrt{t}$ with $x$ to deduce (2) since base changes respect Galois extensions. We check directly as in (1) that the fixed point subalgebra of $A\left[x\right]/(x^2-t_0)$ coincides with $A^\theta\oplus A^{-\theta}x$. Part (3) is obtained by sending $x$ to $\alpha$.
	\end{proof}
	
	Then the isomorphisms of (3) and (4) of Lemma \ref{lem:relation} follow by generalities on the Galois descent (see \cite[Theorem A.3]{hayashikgb}).

	\begin{prop}\label{prop:flatbc}
		Let $k\to k'$ be a flat homomorphism of commutative $\bZ\left[1/2\right]$-algebras. Put $B=k'\otimes_k A$ and $\eta=k'\otimes_k\theta$. Then there is a canonical isomorphism $k'\otimes_k\bfA\cong \bfB$ of $k'\left[t\right]$-algebras.
	\end{prop}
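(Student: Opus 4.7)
The plan is to construct a canonical $k'[t]$-algebra homomorphism $\varphi : k' \otimes_k \bfA \to \bfB$ and then verify it is an isomorphism by checking surjectivity and injectivity separately.

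First I would define $\varphi$ using the functoriality of the contraction construction. The natural $k$-algebra map $\iota : A \to B$, $a \mapsto 1 \otimes a$, intertwines the involutions in the sense that $\iota \circ \theta = \eta \circ \iota$, since by hypothesis $\eta = \id_{k'} \otimes_k \theta$. By the functoriality mentioned after Definition \ref{defn:contraction}, this yields a $k[t]$-algebra homomorphism $\bfA \to \bfB$; since $\bfB$ is a $k'[t]$-algebra, extending scalars from $k[t]$ to $k'[t]$ produces the desired $\varphi$. Explicitly, $\varphi$ sends $A^\theta$ into $1 \otimes A^\theta \subset \bfB$ and $\frac{1}{\sqrt{t}} A^{-\theta}$ into $\frac{1}{\sqrt{t}}(1 \otimes A^{-\theta}) \subset \bfB$.

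For surjectivity, I would observe that since $1/2 \in k$, the decomposition $A = A^\theta \oplus A^{-\theta}$ base-changes (without needing flatness) to
\[
B = (k' \otimes_k A^\theta) \oplus (k' \otimes_k A^{-\theta}),
\]
and this coincides with the $\eta$-eigenspace decomposition because $\eta = \id_{k'} \otimes_k \theta$ acts as $+1$ on the first summand and $-1$ on the second. Hence $B^\eta = k' \otimes_k A^\theta$ and $B^{-\eta} = k' \otimes_k A^{-\theta}$, so the generating sets $B^\eta$ and $\frac{1}{\sqrt{t}} B^{-\eta}$ of $\bfB$ lie in the image of $\varphi$.

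For injectivity, flatness of $k \to k'$ is essential. Since $\bfA \subset A[\sqrt{t}^{\pm 1}]$ by definition and tensoring an injection with a flat module preserves injectivity, we obtain an injection
\[
k' \otimes_k \bfA \hookrightarrow k' \otimes_k A\left[\sqrt{t}^{\pm 1}\right] = B\left[\sqrt{t}^{\pm 1}\right].
\]
A quick diagram chase identifies this composite with $\bfB \hookrightarrow B[\sqrt{t}^{\pm 1}]$ precomposed with $\varphi$, forcing $\varphi$ to be injective. The main (and essentially only) nontrivial point in the proof is this injectivity, which is precisely where the flatness hypothesis enters; every other step uses only the invertibility of $2$ in $k$.
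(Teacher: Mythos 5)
Your proposal is correct and follows essentially the same route as the paper: flatness of $k\to k'$ gives the injection $k'\otimes_k\bfA\hookrightarrow B\left[\sqrt{t}^{\pm 1}\right]$, and the identification of the image with $\bfB$ comes down to the canonical isomorphisms $k'\otimes_k A^{\theta}\cong B^{\eta}$ and $k'\otimes_k A^{-\theta}\cong B^{-\eta}$ obtained from the splitting $A=A^{\theta}\oplus A^{-\theta}$. The paper's proof is just a terser version of your surjectivity-plus-injectivity argument.
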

	
	\begin{proof}
		Since $k'$ is flat over $k$, $\bfA\otimes_k k'$ maps injectively into $(A\otimes_k k')\left[\sqrt{t}^{\pm 1}\right]$. Therefore the proof will be completed by comparing the generators. Since we have a canonical splitting
		$A=A^{\theta}\oplus A^{-\theta}$,
		one has canonical isomorphisms
		\[\begin{array}{cc}
			k'\otimes_k A^{\theta}\cong B^{\eta},
			&k'\otimes_k A^{-\theta}\cong B^{-\eta}.
		\end{array}\]

	\end{proof}

	We next see the relation with the definition in \cite{MR3797197}. We begin with an elementary observation from a corresponding result in ring theory. To state it, let us introduce the following notations: For $a\in A$, write
	\[\begin{array}{cc}
		a_+=\frac{a+\theta (a)}{2},
		&a_-=\frac{a-\theta (a)}{2}.
	\end{array}\]
	For an indexed element $a_\lambda\in A$, we will write $a_{\lambda\pm}=(a_\lambda)_{\pm}$.
	
	\begin{lem}\label{lem:elementaryinvarianttheory}
		Let $\{a_\lambda\}$ be a (possibly infinite) generator of $A$.
		\begin{enumerate}
			\renewcommand{\labelenumi}{(\arabic{enumi})}
			\item As a $k$-algebra, $A^{\theta}$ is generated by elements of the forms $a_{\lambda+}$ and $a_{\lambda-}a_{\mu-}$ ($\lambda,\mu\in\Lambda$).
			\item As an $A^{\theta}$-module, $A^{-\theta}$ is generated by elements of the form $a_{\lambda-}$ ($\lambda\in\Lambda$).
		\end{enumerate}
	\end{lem}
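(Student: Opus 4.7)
The plan is to reduce everything to monomial bookkeeping, using the decomposition $a_\lambda = a_{\lambda+}+a_{\lambda-}$ and the fact that $2 \in k^\times$.

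First I would write any element $x \in A$ as a $k$-linear combination of monomials in the $a_\lambda$, then, upon substituting $a_\lambda = a_{\lambda+}+a_{\lambda-}$ and expanding, as a $k$-linear combination of monomials $m = a_{\lambda_1\epsilon_1}\cdots a_{\lambda_n\epsilon_n}$ with each $\epsilon_i \in \{+,-\}$. Since $\theta(a_{\lambda+}) = a_{\lambda+}$ and $\theta(a_{\lambda-}) = -a_{\lambda-}$, we have $\theta(m) = (-1)^{e(m)}m$ where $e(m)$ is the number of $-$ factors in $m$. Grouping the expansion of $x$ by the parity of $e(m)$ gives $x = x_{\mathrm{ev}} + x_{\mathrm{od}}$ with $x_{\mathrm{ev}} \in A^\theta$ and $x_{\mathrm{od}} \in A^{-\theta}$; by uniqueness of the splitting $A = A^\theta \oplus A^{-\theta}$, this agrees with $x = x_+ + x_-$.

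For part (1), if $x \in A^\theta$, then $x$ equals the sum of its even-parity monomial contributions $x_{\mathrm{ev}}$. In each such monomial the $-$ factors come in an even number, so by commutativity we may pair them into products of the form $a_{\lambda-}a_{\mu-}$. The monomial is thereby exhibited as a product of elements of the forms $a_{\lambda+}$ and $a_{\lambda-}a_{\mu-}$, proving (1).

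For part (2), if $x \in A^{-\theta}$, the same argument shows $x$ is a $k$-linear combination of odd-parity monomials. From each such monomial, factor out a single $a_{\lambda-}$; the remainder is an even-parity monomial, which by the argument of (1) lies in the $k$-subalgebra generated by $a_{\lambda+}$ and $a_{\mu-}a_{\nu-}$, and in particular lies in $A^\theta$. Thus $x$ is an $A^\theta$-linear combination of the $a_{\lambda-}$. There is no real obstacle beyond careful bookkeeping; the only point to be attentive to is to use commutativity of $A$ to reorder factors freely when pairing the $-$ generators.
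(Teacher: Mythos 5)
Your proposal is correct and follows essentially the same route as the paper: expand each generator as $a_\lambda=a_{\lambda+}+a_{\lambda-}$, sort the resulting monomials by the parity of the number of $-$ factors, and then pair the $-$ factors (for (1)) or pull out a single $a_{\lambda-}$ (for (2)). Your use of uniqueness of the splitting $A=A^\theta\oplus A^{-\theta}$ to isolate the even/odd parts is just a repackaging of the paper's computation $a=\tfrac{a\pm\theta(a)}{2}$, so there is no substantive difference.
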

	
	For a tuple $I=(\lambda_1,\lambda_2,\ldots,\lambda_n)\in\Lambda^n$ ($n\geq 0$), write
	\[\begin{array}{ccc}
		a_I=\prod_{i=1}^n a_{\lambda_i},
		&a_{(I,+)}=\prod_{i=1}^n a_{\lambda_i+},
		&a_{(I,-)}=\prod_{i=1}^n a_{\lambda_i-}
	\end{array}\]
	We also put $|I|=n$.
	
	\begin{proof}
		Let $a\in A$. Choose a presentation
		\[a=\sum_{I\in\Lambda^n} c_I a_I =\sum_{I=(\lambda_1,\lambda_2,\ldots,\lambda_n)\in\Lambda^n} c_I\prod_{i=1}^n (a_{\lambda+}+a_{\lambda-}),\]
		where $c_I\in k$ for each tuple $I$. Expand the right hand side to obtain an expression
		\[a=\sum_{I\in\Lambda^n,J\in\Lambda^m} c_{IJ} a_{(I,+)} a_{(J,-)},\]
		where $c_{IJ}\in k$.
		
		For (1), assume $\theta(a)=a$. Then we have
		\[a=\frac{a+\theta(a)}{2}
		=\sum_{\overset{I,J}{|J|\mathrm{~is~even}}} c_{IJ} a_{(I,+)}a_{(J,-)}.\]
		For each $J=(\lambda_1,\lambda_2,\ldots,\lambda_{2n})$, we have
		\[a_{(J,-)}=\prod_{i=1}^n (a_{\lambda_{2i-1}-}a_{\lambda_{2i}-}).\]
		Therefore $a$ is expressed by a polynomial of $a_{\lambda+}$ and $a_{\lambda-}a_{\mu-}$.
		
		For (2), assume $\theta(a)=-a$. Then we have
		\[a=\frac{a-\theta(a)}{2}
		=\sum_{\overset{I,J}{|J|\mathrm{~is~odd}}} c_{IJ} a_{(I,+)}a_{(J,-)}.\]
		For each $J=(\lambda_1,\lambda_2,\ldots,\lambda_{2n+1})$, write $\lambda_J=\lambda_{2n+1}$ and $J^\circ=(\lambda_1,\lambda_2,\ldots,\lambda_{2n})$. Then we have
		\[a=\frac{a-\theta(a)}{2}=\sum_{\overset{I,J}{|J|\mathrm{~is~odd}}} 
		c_{IJ} a_{(I,+)}a_{(J^\circ,-)}
		a_{\lambda_J-}.\]
		Since $c_{IJ} a_{(I,+)}a_{(J^\circ,-)}$ belongs to $A^{\theta}$ for each pair $(I,J)$ of tuples with $|J|$ odd, $a$ is expressed as an $A^{\theta}$-linear combination of elements of the form $a_{\lambda-}$.
	\end{proof}
	
	As an immediate consequence, we obtain:
	
	\begin{prop}\label{prop:generatorofbfA}
		Let $\{a_\lambda\}_{\lambda\in\Lambda}$ be as in Lemma \ref{lem:elementaryinvarianttheory}. Then $\bfA$ is generated by the elements of the forms $a_{\lambda+}$ and $\frac{1}{\sqrt{t}}a_{\lambda-}$ as a $k\left[t\right]$-algebra ($\lambda\in\Lambda$).
	\end{prop}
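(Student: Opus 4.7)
The plan is to deduce the proposition directly from Lemma \ref{lem:elementaryinvarianttheory}, which already supplies generators of $A^{\theta}$ and $A^{-\theta}$ built out of $\{a_{\lambda+}, a_{\lambda-}\}$. By definition, $\bfA$ is the $k[t]$-subalgebra of $A[\sqrt{t}^{\pm 1}]$ generated by $A^{\theta}$ and $\tfrac{1}{\sqrt{t}} A^{-\theta}$, so it suffices to show that each of these two generating sets lies in the $k[t]$-subalgebra $\bfA'\subset\bfA$ generated by $\{a_{\lambda+}\}_{\lambda}\cup\{\tfrac{1}{\sqrt{t}} a_{\lambda-}\}_{\lambda}$.

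First I would handle $A^{\theta}$. By Lemma \ref{lem:elementaryinvarianttheory} (1) it is generated as a $k$-algebra by the elements $a_{\lambda+}$ and $a_{\lambda-}a_{\mu-}$. The $a_{\lambda+}$ are in $\bfA'$ by construction, and the key observation is the identity
\[
a_{\lambda-}a_{\mu-}=t\cdot\frac{a_{\lambda-}}{\sqrt{t}}\cdot\frac{a_{\mu-}}{\sqrt{t}},
\]
which expresses $a_{\lambda-}a_{\mu-}$ as a $k[t]$-linear combination of products of our chosen generators. Hence $A^{\theta}\subset\bfA'$.

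Next I would treat $\tfrac{1}{\sqrt{t}}A^{-\theta}$. By Lemma \ref{lem:elementaryinvarianttheory} (2) any element of $A^{-\theta}$ is a finite $A^{\theta}$-linear combination $\sum_i b_i a_{\lambda_i-}$ with $b_i\in A^{\theta}$. Multiplying by $\tfrac{1}{\sqrt{t}}$ gives
\[
\frac{1}{\sqrt{t}}\sum_i b_i a_{\lambda_i-}=\sum_i b_i\cdot\frac{a_{\lambda_i-}}{\sqrt{t}},
\]
and the previous step has already shown $b_i\in\bfA'$, while $\tfrac{a_{\lambda_i-}}{\sqrt{t}}\in\bfA'$ by definition. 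So $\tfrac{1}{\sqrt{t}}A^{-\theta}\subset\bfA'$, and thus $\bfA'=\bfA$.

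There is essentially no obstacle: all the real work was done in Lemma \ref{lem:elementaryinvarianttheory}, and the only subtle point is making sure one exploits the $t$ in the base ring $k[t]$ when rewriting quadratic expressions in the $a_{\lambda-}$. The argument is entirely formal and does not depend on any finiteness hypothesis on $\Lambda$.
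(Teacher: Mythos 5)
Your argument is correct and is exactly the intended deduction: the paper presents this proposition as an immediate consequence of Lemma \ref{lem:elementaryinvarianttheory} without writing out the details, and the two identities you isolate ($a_{\lambda-}a_{\mu-}=t\cdot\frac{a_{\lambda-}}{\sqrt{t}}\cdot\frac{a_{\mu-}}{\sqrt{t}}$ and the $A^{\theta}$-linear rewriting of $\frac{1}{\sqrt{t}}A^{-\theta}$) are precisely the steps being left implicit. Nothing further is needed.
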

	
	\begin{cor}\label{cor:comparisonwithbhs}
		Choose a generator $(a_\lambda)_{\lambda\in\Lambda}$ of $A$.
		Write
		$\iota:X\hookrightarrow \bA^\Lambda_k$
		for the corresponding closed immersion. 
		\begin{enumerate}
			\renewcommand{\labelenumi}{(\arabic{enumi})}
			\item There is a closed immersion
			$\tilde{\iota}:\bfX\hookrightarrow\bA^{4\Lambda}_{k\left[t\right]}$
			such that the map
			\[X\otimes_k k\left[\sqrt{t}^{\pm 1}\right]
			\cong \bfX\otimes_{k\left[t\right]} k\left[\sqrt{t}^{\pm 1}\right]
			\xrightarrow{\tilde{\iota}\otimes_{k\left[t\right]} k\left[\sqrt{t}^{\pm 1}\right]}
			\bA^{4\Lambda}_{k\left[\sqrt{t}^{\pm 1}\right]}\]
			(see Lemma \ref{lem:relation} (3) for the first isomorphism)
			is expressed as
			\[x\mapsto \frac{1}{2}\left(\begin{array}{cc}
				\iota(x)+\iota(\theta(x)) & \sqrt{t}(\iota(x)-\iota(\theta(x))) \\
				\sqrt{t}^{-1}(\iota(x)-\iota(\theta(x))) & \iota(x)+\iota(\theta(x))
			\end{array}\right),\]
			where $x$ runs through $R$-points of $X$ for $k\left[\sqrt{t}^{\pm 1}\right]$-algebras $R$.
			\item The scheme $\bfX$ exhibits the scheme-theoretic closure of $\bfX_t$ along
			\begin{equation}
				\bfX_t
				\xrightarrow{\tilde{\iota} \otimes_{k\left[t\right]} k\left[t^{\pm 1}\right]}
				\bA^{4\Lambda}_{k\left[t^{\pm 1}\right]}
				\hookrightarrow \bA^{4\Lambda}_{k\left[t\right]}.
				\label{eq:restriction}
			\end{equation}
		\end{enumerate}
	\end{cor}
	
	\begin{proof}
		Define a $k\left[t\right]$-algebra homomorphism
		\[\tilde{f}:k\left[t,x_{ij\lambda};~1\leq i,j\leq 2,~\lambda\in\Lambda\right]\to\bfA\]
		by
		\[\begin{array}{cc}
			\tilde{f}(x_{ii\lambda})=a_{\lambda+}&(i\in\{1,2\})\\
			\tilde{f}(x_{12\lambda})=\frac{t}{\sqrt{t}}a_{\lambda-}\\
			\tilde{f}(x_{21\lambda})=\frac{1}{\sqrt{t}}a_{\lambda-}
		\end{array}\]
		for $\lambda\in\Lambda$. We prove that $\tilde{\iota}\coloneqq \Spec \tilde{f}$ satisfies the conditions of (1). Observe that $\tilde{f}$ is surjective by Proposition \ref{prop:generatorofbfA}. Hence $\tilde{\iota}$ is a closed immersion. The base change of $\tilde{f}$ to $k\left[\sqrt{t}^{\pm 1}\right]$ has the following identification:
		\[k\left[\sqrt{t}^{\pm 1},x_{ij\lambda};~1\leq i,j\leq 2,~\lambda\in\Lambda\right]
		\to \bfA\otimes_{k\left[t\right]} k\left[\sqrt{t}^{\pm 1}\right]
		\cong A\left[\sqrt{t}^{\pm 1}\right];\]
		\[\begin{array}{cc}
			x_{ii\lambda}\mapsto a_{\lambda+}&(i\in\{1,2\})\\
			x_{12\lambda}\mapsto \frac{t}{\sqrt{t}}a_{\lambda-}\\
			x_{21\lambda}\mapsto\frac{1}{\sqrt{t}}a_{\lambda-}.
		\end{array}\]
		It is easy to show that the corresponding map of the sets of $R$-points coincides with that in (1).
		
		For (2), notice that the morphism \eqref{eq:restriction} factors through $\bfX$ by definitions. The assertion now follows from Lemma \ref{lem:relation} (1) and Proposition \ref{prop:detect_image}.
	\end{proof}
	
	\begin{rem}\label{rem:simplified_BHS}
		More easily, similar assertions hold if we replace the closed immersion of (1) with the map
		$\bfX\hookrightarrow
		\bA^{2\Lambda}_{k\left[\sqrt{t}^{\pm 1}\right]}$ defined by
		\[k\left[t,x_{\lambda},y_\lambda;~\lambda\in\Lambda\right]\to\bfA;\]
		\[\begin{array}{cc}
			x_{\lambda}\mapsto a_{\lambda+}&
			y_{\lambda}\mapsto\frac{1}{\sqrt{t}}a_{\lambda-}.
		\end{array}\]
		Its base change to $k\left[\sqrt{t}^{\pm 1}\right]$ is identified with the map
		\[X\otimes_k k\left[\sqrt{t}^{\pm 1}\right]
		\hookrightarrow
		\bA^{2\Lambda}_{k\left[\sqrt{t}^{\pm 1}\right]};~x\mapsto\frac{1}{2}\left(\begin{array}{c}
			\iota(x)+\iota(\theta(x))\\
			\sqrt{t}^{-1}(\iota(x)-\iota(\theta(x)))
		\end{array}\right).\]
		We followed \cite{MR3797197} to see the complicated version in Corollary \ref{cor:comparisonwithbhs} for later discussions in the group setting.
	\end{rem}
	
	\begin{ex}\label{ex:sl_2/so(2)}
		Put $A=k\left[x,y,z\right]/(x^2-y^2+z^2-1)$. Define an involution $\theta$ on $A$ by $\theta(x)=x$, $\theta(y)=-y$, $\theta(z)=-z$. Then we follow the construction of Remark \ref{rem:simplified_BHS} to obtain $\bfA\cong k[t,x,y,z]/(x^2-ty^2-tz^2-1)$ by choosing $(x,y,z)$ as a generator of $A$.
	\end{ex}
	
	\begin{cor}
		Let $\eta$ be an involution on $A$ over $k$ commuting with $\theta$. Write
		\[\begin{array}{cccc}
			A^{\theta,\eta}=A^{\theta}\cap A^\eta,&A^{\theta,-\eta}=A^{\theta}\cap A^{-\eta},
			&A^{-\theta,\eta}=A^{-\theta}\cap A^\eta,&A^{-\theta,-\eta}=A^{-\theta}\cap A^{-\eta}.
		\end{array}\]
		We denote the contraction parameters of $\bfA(\theta)$ and $\bfA(\theta,\eta)$ by $t_1$ and $t_2$ respectively to distinguish the variables of the first and second contractions. Then $\bfA(\theta,\eta)$ is identified with the $k\left[t_1,t_2\right]$-subalgebra of $A\left[\sqrt{t}_1^{\pm 1},\sqrt{t}_2^{\pm 1}\right]$ generated by
		\[A^{\theta,\eta},~A^{\theta,-\eta}\frac{1}{\sqrt{t}_2},~A^{-\theta,\eta}\frac{1}{\sqrt{t}_1}
		,~A^{-\theta,-\eta}\frac{1}{\sqrt{t_1}\sqrt{t_2}}.\]
		In particular, one has a canonical isomorphism $\bfA(\theta,\eta)\cong \bfA(\eta,\theta)$ because of the symmetry of the description of $\bfA(\theta,\eta)$ in $\theta$ and $\eta$.
	\end{cor}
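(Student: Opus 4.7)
The plan is to unwind the two contractions in succession and apply Lemma \ref{lem:elementaryinvarianttheory} to the pair $(\bfA(\theta),\eta)$. Because $1/2\in k$ and $\theta,\eta$ commute, we have the four-fold eigendecomposition
\[A=A^{\theta,\eta}\oplus A^{\theta,-\eta}\oplus A^{-\theta,\eta}\oplus A^{-\theta,-\eta},\]
whence $A^{\theta}=A^{\theta,\eta}\oplus A^{\theta,-\eta}$ and $A^{-\theta}=A^{-\theta,\eta}\oplus A^{-\theta,-\eta}$.

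First I would realize $\bfA(\theta)$ as a $k[t_1]$-subalgebra of $A[\sqrt{t_1}^{\pm 1}]$ and extend $\eta$ to that ambient ring by $\eta(\sqrt{t_1})=\sqrt{t_1}$; since $\eta$ commutes with $\theta$ on $A$, this restricts to the natural involution $\eta$ on $\bfA(\theta)$ featured in the statement. By Definition \ref{defn:contraction}, the union $A^{\theta}\cup\tfrac{1}{\sqrt{t_1}}A^{-\theta}$ generates $\bfA(\theta)$ as a $k[t_1]$-algebra. Splitting each generator into $\eta$-eigencomponents by the four-fold decomposition above and applying Lemma \ref{lem:elementaryinvarianttheory} to $(\bfA(\theta),\eta)$ over the ground ring $k[t_1]$, one obtains that $\bfA(\theta)^{\eta}$ is generated as a $k[t_1]$-algebra by $A^{\theta,\eta}$, $\tfrac{1}{\sqrt{t_1}}A^{-\theta,\eta}$, and the pairwise products from $A^{\theta,-\eta}\cup\tfrac{1}{\sqrt{t_1}}A^{-\theta,-\eta}$, while $\bfA(\theta)^{-\eta}$ is generated as a $\bfA(\theta)^{\eta}$-module by $A^{\theta,-\eta}$ and $\tfrac{1}{\sqrt{t_1}}A^{-\theta,-\eta}$.

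Next, applying Definition \ref{defn:contraction} once more to the pair $(\bfA(\theta),\eta)$ with parameter $t_2$, the algebra $\bfA(\theta,\eta)$ is the $k[t_1,t_2]$-subalgebra of $A[\sqrt{t_1}^{\pm 1},\sqrt{t_2}^{\pm 1}]$ generated by $\bfA(\theta)^{\eta}$ and $\tfrac{1}{\sqrt{t_2}}\bfA(\theta)^{-\eta}$. Substituting the descriptions above yields the four claimed sets $A^{\theta,\eta}$, $\tfrac{1}{\sqrt{t_1}}A^{-\theta,\eta}$, $\tfrac{1}{\sqrt{t_2}}A^{\theta,-\eta}$, and $\tfrac{1}{\sqrt{t_1}\sqrt{t_2}}A^{-\theta,-\eta}$ as generators, provided we also verify that the pairwise products from $A^{\theta,-\eta}\cup\tfrac{1}{\sqrt{t_1}}A^{-\theta,-\eta}$ that appear inside $\bfA(\theta)^{\eta}$ already lie in the $k[t_1,t_2]$-algebra generated by the four sets. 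This is the main (but routine) bookkeeping: it follows from identities such as
\[A^{\theta,-\eta}\cdot\tfrac{1}{\sqrt{t_1}}A^{-\theta,-\eta}=t_2\cdot\tfrac{1}{\sqrt{t_2}}A^{\theta,-\eta}\cdot\tfrac{1}{\sqrt{t_1}\sqrt{t_2}}A^{-\theta,-\eta},\]
together with the analogous formulas for $(A^{\theta,-\eta})^{2}$ and $\tfrac{1}{t_1}(A^{-\theta,-\eta})^{2}$, where the auxiliary scalar $t_2$ lies in $k[t_1,t_2]$.

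Finally, the isomorphism $\bfA(\theta,\eta)\cong\bfA(\eta,\theta)$ is immediate from this description, since the four generating sets inside the common ambient algebra $A[\sqrt{t_1}^{\pm 1},\sqrt{t_2}^{\pm 1}]$ are manifestly invariant under the swap $(\theta,t_1)\leftrightarrow(\eta,t_2)$.
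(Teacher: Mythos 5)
Your proposal is correct and follows essentially the same route as the paper: the paper's proof also decomposes the generating set $A^{\theta}\cup\tfrac{1}{\sqrt{t_1}}A^{-\theta}$ of $\bfA(\theta)$ into $\eta$-eigencomponents and then invokes Proposition \ref{prop:generatorofbfA} applied to $(\bfA(\theta),\eta)$. The only difference is that you re-derive the content of that proposition in this instance by applying Lemma \ref{lem:elementaryinvarianttheory} directly and handling the pairwise-product bookkeeping by hand (your $t_2$-rescaling identities are correct), whereas the paper simply cites the proposition.
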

	
	\begin{proof}
		One may regard $\bfA(\theta)\left[\sqrt{t}_2^{\pm 1}\right]$ as a subalgebra of $A\left[\sqrt{t}_1^{\pm 1},\sqrt{t}_2^{\pm 1}\right]$. Therefore we may work within $\bfA(\theta)\left[\sqrt{t}_2^{\pm 1}\right]$.
		
		Notice that $\bfA(\theta)$ is generated by
		\[A^{\theta,\eta},~A^{-\theta,\eta}\frac{1}{\sqrt{t}_1}\subset\bfA(\theta)^{\eta}\]
		and
		\[A^{\theta,-\eta},~A^{-\theta,-\eta}\frac{1}{\sqrt{t_1}}\subset\bfA(\theta)^{-\eta}.\]
		The assertion now follows from Proposition \ref{prop:generatorofbfA}.
	\end{proof}
	
	\begin{cor}[Subag]\label{cor:schematic_closure_double}
		Suppose that we are given an involution $\eta$ on $A$ over $k$ commuting with $\theta$.
		Choose a closed immersion $i:X\hookrightarrow\bA^{\Lambda}_k$ as in Corollary \ref{cor:comparisonwithbhs}. Define $\bfX(\theta)=\Spec\bfA(\theta)$ and $\bfX(\theta,\eta)=\Spec\bfA(\theta,\eta)$. Let
		$\tilde{i}_\theta:\bfX(\theta)\hookrightarrow\bA^{4\Lambda}_{k\left[t_1\right]}$
		be the map in Corollary \ref{cor:comparisonwithbhs}. Let
		$\tilde{i}_{\theta,\eta}:\bfX(\theta,\eta)\hookrightarrow
		\bA^{16\Lambda}_{k\left[t_1,t_2\right]}$
		be the map obtained by applying $\tilde{i}_\theta$ to Corollary \ref{cor:comparisonwithbhs}. Then $\bfX(\theta,\eta)$ exhibits the scheme-theoretic closure of $\bfX(\theta,\eta)\otimes_{k\left[t_1,t_2\right]}
		k\left[t_1^{\pm 1},t^{\pm 1}_2\right]$
		in $\bA^{16\Lambda}_{k\left[t^{\pm 1}_1,t^{\pm 1}_2\right]}$. The base change of $\tilde{i}_{\theta,\eta}$ to $k\left[\sqrt{t}^{\pm 1}_1,\sqrt{t}^{\pm 1}_2\right]$ is identified with the map
		\[X\otimes_k k\left[\sqrt{t}^{\pm 1}_1,\sqrt{t}^{\pm 1}_2\right]\hookrightarrow
		\bA^{16\Lambda}_{k\left[\sqrt{t}^{\pm 1}_1,\sqrt{t}^{\pm 1}_2\right]};~x\mapsto \frac{1}{4}(x_{ij}),
		\]
		where
		\[\begin{array}{cc}
			x_{jj}=i(x)+ i(\theta x)+i(\eta x)+i(\theta\eta x)&(1\leq j\leq 4)\\
			x_{12}=x_{34}=i(x)+ i(\theta x)+i(\eta x)+i(\theta\eta x)\\
			x_{13}=x_{24}=\sqrt{t}_2(i(x)+ i(\theta x)-i(\eta x)-i(\theta\eta x))\\
			x_{14}=\sqrt{t}_1 \sqrt{t}_2(i(x)-i(\theta x)-i(\eta x)+i(\theta\eta x))\\
			x_{21}=x_{43}=\sqrt{t}^{-1}_1(i(x)-i(\theta x)+i(\eta x)-i(\theta\eta x))\\
			x_{23}=\sqrt{t}^{-1}_1\sqrt{t}_2(i(x)-i(\theta x)-i(\eta x)+i(\theta\eta x))\\
			x_{31}=x_{42}=\sqrt{t}_2^{-1}(i(x)+ i(\theta x)-i(\eta x)-i(\theta\eta x))\\
			x_{32}=\sqrt{t}_1\sqrt{t}_2^{-1}(i(x)-i(\theta x)-i(\eta x)+i(\theta\eta x))\\
			x_{41}=\sqrt{t}_1^{-1}\sqrt{t}_2^{-1} (i(x)-i(\theta x)-i(\eta x)+i(\theta\eta x)).
		\end{array}\]
	\end{cor}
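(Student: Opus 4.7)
The strategy is to iterate Corollary \ref{cor:comparisonwithbhs}. Applying it first to $(A,\theta)$ with generators $\{a_\lambda\}_{\lambda\in\Lambda}$ produces $\tilde{i}_\theta:\bfX(\theta)\hookrightarrow\bA^{4\Lambda}_{k\left[t_1\right]}$, together with a generating set of $\bfA(\theta)$ as a $k\left[t_1\right]$-algebra indexed by $4\Lambda$ and a realization of $\bfX(\theta)$ as the scheme-theoretic closure of $\bfX(\theta)\otimes_{k\left[t_1\right]}k\left[t_1^{\pm 1}\right]$ in $\bA^{4\Lambda}_{k\left[t_1\right]}$. A second application, now to $(\bfA(\theta),\eta)$ viewed as a $k\left[t_1\right]$-algebra with involution and equipped with the above $4|\Lambda|$ generators, yields the closed immersion $\tilde{i}_{\theta,\eta}:\bfX(\theta,\eta)\hookrightarrow\bA^{4\cdot 4\Lambda}_{k\left[t_1,t_2\right]}=\bA^{16\Lambda}_{k\left[t_1,t_2\right]}$.

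To verify the explicit matrix form, I would base change to $k\left[\sqrt{t}_1^{\pm 1},\sqrt{t}_2^{\pm 1}\right]$ and track the two applications. By Corollary \ref{cor:comparisonwithbhs}(i), the first application records the $4|\Lambda|$ generators as entries of a $2\times 2$ matrix with diagonal entries $\frac{1}{2}(i(x)+i(\theta x))$ and off-diagonal entries $\frac{1}{2}\sqrt{t}_1^{\pm 1}(i(x)-i(\theta x))$. The second application applies the analogous formula entrywise with $\eta$ in place of $\theta$ and $t_2$ in place of $t_1$: each inner entry $g$ is replaced by the $2\times 2$ block with diagonals $\frac{1}{2}(g+\eta g)$ and off-diagonals $\frac{1}{2}\sqrt{t}_2^{\pm 1}(g-\eta g)$. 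Using $\eta(i(x))=i(\eta x)$ and $\eta(i(\theta x))=i(\theta\eta x)$ (which relies on the commutativity of $\theta$ and $\eta$), direct expansion of the resulting $2\times 2$ block matrix of $2\times 2$ matrices yields a $4\times 4$ matrix whose entries are $\frac{1}{4}$ times a product of $\sqrt{t}_1^{\pm 1}$, $\sqrt{t}_2^{\pm 1}$ with one of the four sign variants of $i(x)\pm i(\theta x)\pm i(\eta x)\pm i(\theta\eta x)$, matching the list in the statement.

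For the scheme-theoretic closure assertion, I would use that by construction of iterated contraction algebras, $\bfA(\theta,\eta)$ embeds into $A\left[\sqrt{t}_1^{\pm 1},\sqrt{t}_2^{\pm 1}\right]$, a ring in which $t_1$ and $t_2$ are units. Hence $t_1t_2$ is a non-zero-divisor in $\bfA(\theta,\eta)$, so the localization map $\bfA(\theta,\eta)\to\bfA(\theta,\eta)\otimes_{k\left[t_1,t_2\right]}k\left[t_1^{\pm 1},t_2^{\pm 1}\right]$ is injective. Consequently $\bfX(\theta,\eta)\otimes_{k\left[t_1,t_2\right]}k\left[t_1^{\pm 1},t_2^{\pm 1}\right]$ is scheme-theoretically dense in $\bfX(\theta,\eta)$, and since $\bfX(\theta,\eta)$ is closed in $\bA^{16\Lambda}_{k\left[t_1,t_2\right]}$, it coincides with the scheme-theoretic closure of that open subscheme.

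The main obstacle is the entry-by-entry verification of the explicit $4\times 4$ matrix display; this is conceptually routine but requires careful bookkeeping of the $16|\Lambda|$ generator indexing and the tracking of the $\sqrt{t}_i^{\pm 1}$ factors through both iterations.
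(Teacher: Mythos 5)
The paper states this corollary without any proof (it is attributed to Subag and left as an immediate consequence of Corollary \ref{cor:comparisonwithbhs}), so there is no written argument to compare against; your proposal is exactly the intended derivation and it is correct. Two points are worth flagging. First, your scheme-theoretic density step is not mere bookkeeping but a genuinely necessary supplement: a literal second application of Corollary \ref{cor:comparisonwithbhs} (ii) to $(\bfA(\theta),\eta)$ only exhibits $\bfX(\theta,\eta)$ as the closure of the locus $t_2\neq 0$, whereas the statement concerns the locus $t_1t_2\neq 0$; your observation that $\bfA(\theta,\eta)$ embeds into $A\left[\sqrt{t}_1^{\pm 1},\sqrt{t}_2^{\pm 1}\right]$, so that localization at $t_1t_2$ is injective, combined with Proposition \ref{prop:detect_image} and Example \ref{ex:closed_immersion}, closes this gap cleanly. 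Second, if you actually carry out the entry-by-entry expansion you defer to the end, you will find that your (correct) procedure yields $x_{12}=x_{34}=\sqrt{t}_1\left(i(x)-i(\theta x)+i(\eta x)-i(\theta\eta x)\right)$, mirroring the displayed $x_{21}=x_{43}$ up to the sign of the exponent of $\sqrt{t}_1$, rather than the value $i(x)+i(\theta x)+i(\eta x)+i(\theta\eta x)$ printed in the statement; the latter appears to be a typo in the paper, not an error in your method, and all fourteen remaining entries match.
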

	
	\begin{cor}\label{cor:offinitetype}
		If $A$ is of finite type over $k$, then so is $\bfA$ over $k\left[t\right]$.
	\end{cor}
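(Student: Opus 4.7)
The plan is to invoke Proposition \ref{prop:generatorofbfA} directly. Since $A$ is assumed to be of finite type over $k$, I can choose a \emph{finite} generating set $\{a_\lambda\}_{\lambda\in\Lambda}$ of $A$ as a $k$-algebra, with $|\Lambda|<\infty$. Proposition \ref{prop:generatorofbfA} then furnishes an explicit finite generating set of $\bfA$ as a $k\left[t\right]$-algebra, namely the elements
\[a_{\lambda+}=\tfrac{1}{2}(a_\lambda+\theta(a_\lambda)),\qquad \tfrac{1}{\sqrt{t}}a_{\lambda-}=\tfrac{1}{2\sqrt{t}}(a_\lambda-\theta(a_\lambda)),\qquad \lambda\in\Lambda.\]
Since $\Lambda$ is finite, this is a finite set of generators, so $\bfA$ is of finite type over $k\left[t\right]$.

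There is no real obstacle here; the corollary is a one-line consequence of Proposition \ref{prop:generatorofbfA} once one observes that the generators produced there are indexed by the same set $\Lambda$ used to generate $A$. The only mild subtlety to flag is that finite type is preserved under this particular presentation because for each $k$-algebra generator $a_\lambda$ of $A$ we use exactly two $k\left[t\right]$-algebra generators of $\bfA$, so the cardinality of the generating set at most doubles and in particular stays finite.
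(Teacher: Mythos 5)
Your proof is correct and is exactly the paper's intended argument: the corollary is stated without proof as an immediate consequence of Proposition \ref{prop:generatorofbfA}, applied to a finite generating set $\{a_\lambda\}_{\lambda\in\Lambda}$ of $A$, which yields the finite generating set $\{a_{\lambda+}\}\cup\{\tfrac{1}{\sqrt{t}}a_{\lambda-}\}$ of $\bfA$ over $k\left[t\right]$.
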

	
	\begin{cor}\label{cor:surj}
		The assignment $(A,\theta)\rightsquigarrow \bfA$ respects surjective maps.
	\end{cor}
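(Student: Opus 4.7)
The plan is to combine the explicit generating set given by Proposition \ref{prop:generatorofbfA} with the compatibility of the $\pm$-decomposition under equivariant maps. Let $f\colon (A,\theta_A)\to (B,\theta_B)$ be a surjective homomorphism of commutative $k$-algebras commuting with the involutions. Since $2\in k^\times$, both $A$ and $B$ split canonically as $A=A^{\theta_A}\oplus A^{-\theta_A}$ and $B=B^{\theta_B}\oplus B^{-\theta_B}$, and $f$ respects these decompositions. Hence $f$ restricts to surjections $A^{\theta_A}\twoheadrightarrow B^{\theta_B}$ and $A^{-\theta_A}\twoheadrightarrow B^{-\theta_B}$: given $b\in B^{\pm\theta_B}$, lift $b$ to any $a\in A$ and replace $a$ by its $\pm$-component $a_\pm$, which still maps to $b$ since $f$ is equivariant.

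Now choose a generating set $\{a_\lambda\}_{\lambda\in\Lambda}$ of $A$ as a $k$-algebra; surjectivity of $f$ makes $\{f(a_\lambda)\}_{\lambda\in\Lambda}$ a generating set of $B$. The map $f$ extends canonically to $A\left[\sqrt{t}^{\pm 1}\right]\to B\left[\sqrt{t}^{\pm 1}\right]$ by the identity on $\sqrt{t}^{\pm 1}$, and by the previous paragraph it carries $A^{\theta_A}$ into $B^{\theta_B}$ and $\tfrac{1}{\sqrt{t}}A^{-\theta_A}$ into $\tfrac{1}{\sqrt{t}}B^{-\theta_B}$. Hence it restricts to a $k\left[t\right]$-algebra homomorphism $\bfA\to\bfB$ whose effect on the canonical generators from Proposition \ref{prop:generatorofbfA} is $a_{\lambda+}\mapsto f(a_\lambda)_+$ and $\tfrac{1}{\sqrt{t}}a_{\lambda-}\mapsto \tfrac{1}{\sqrt{t}}f(a_\lambda)_-$. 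By Proposition \ref{prop:generatorofbfA} applied to $B$ with the generating set $\{f(a_\lambda)\}$, these images generate $\bfB$ as a $k\left[t\right]$-algebra, so $\bfA\to\bfB$ is surjective. There is no genuine obstacle here; the only point to check is that the $\pm$-decomposition is preserved by equivariant surjections, which is immediate from $1/2\in k$.
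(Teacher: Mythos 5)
Your proof is correct and follows exactly the route the paper intends: Corollary \ref{cor:surj} is stated without proof as an immediate consequence of Proposition \ref{prop:generatorofbfA}, and your argument—pushing a generating set of $A$ forward to $B$, using equivariance to get $f(a_{\lambda\pm})=f(a_\lambda)_\pm$, and applying the proposition to $B$—is precisely that deduction, with the useful extra care of verifying that the induced map $\bfA\to\bfB$ exists as a restriction of $f\left[\sqrt{t}^{\pm 1}\right]$.
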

	
	It turns out in scheme theory that contraction of affine $k$-schemes with involutions respects closed immersions. For a digression, let us also prove an open analog of Corollary \ref{cor:surj}:

	\begin{var}\label{var:localization}
		Let $f\in A^{\theta}\cup A^{-\theta}$. Define $\bff\in\bfA$ by
		\[\bff=\begin{cases}
			f&(f\in A^{\theta})\\
			\sqrt{t}f&(f\in A^{-\theta}).
		\end{cases}\]
		\begin{enumerate}
			\renewcommand{\labelenumi}{(\arabic{enumi})}
			\item Write $B=A_f$ for the localization of $A$ by $f$. Then $\theta$ induces an involution on $B$, which we will denote by the same symbol $\theta$.
			\item We have a canonical isomorphism $\bfA_{\bff}\cong \bfB$. 
		\end{enumerate}
	\end{var}
	
	\begin{proof}
		Part (1) is clear. We prove (2). Since the canonical homomorphism
		\[p:A\to B\]
		respects the involution, it induces a homomorphism $\bfp:\bfA\to\bfB$. We wish to prove that $\bfp(\bff)$ is a unit of $\bfB$ in order to extend $\bfp$ to a map $\alpha:\bfA_{\bff}\to \bfB$. To see this, recall that $p(f)$ is a unit of $B$ by definition of $B=A_f$. The assertion for $f\in A^{\theta}$ then follows since $\bfp$ restricts to $p:A^{\theta}\to B^\theta$. Assume $f\in A^{-\theta}$. In this case, $\frac{1}{\sqrt{t}}p(f)^{-1}$ belongs to $\bfB$ and
		$\bfp(\bff)\frac{1}{\sqrt{t}}p(f)^{-1}=1$.
		This shows that $\bfp(\bff)$ is a unit as desired.
		
		We next construct its inverse. We may and do regard $\bfA_{\bff}$ as a subset of $A\left[\sqrt{t}^{\pm 1}\right]_{\bff}$ since localization is flat in general. The canonical homomorphism $A\to A\left[\sqrt{t}^{\pm 1}\right]_{\bff}$ extends to $q:B\left[\sqrt{t}^{\pm 1}\right]\to A\left[\sqrt{t}^{\pm 1}\right]_{\bff}$ by definition of $\bff$. In fact, $f$ is a unit of $A\left[\sqrt{t}^{\pm 1}\right]_{\bff}$ by definition of $\bff$.
		
		We next prove that $q|_{\bfB}$ factors through $\bfA_{\bff}$. In particular, we will obtain
		\[\beta:\bfB\to \bfA_{\bff}.\]
		We may restrict ourselves to generators. Each element of $B$ is expressed as $\frac{a}{f^n}$ with $a\in A$ and $n\geq 0$ (put $f^0=1$). We may multiply $f^n$ to $a$ and $f^n$ if necessary to assume that $n$ is even. In particular, the denominator belongs to $A^{\theta}$. Then we can easily see that $\frac{a}{f^n}$ belongs to $B^\theta$ (resp.~$B^{-\theta}$) if and only if there exists a positive integer $m$ such that $\theta(a)f^m=af^m$ (resp.~$\theta(a)f^m=-af^m$). We may multiply $f^m$ if necessary to assume that $m$ is even. In particular, $\frac{a}{f^n}$ belongs to $B^\theta$ (resp.~$B^{-\theta}$) if and only if there exists a positive even integer $m$ such that $af^m\in A^{\theta}$ (resp.~$af^m\in A^{-\theta}$). Let us take $m$ in each case. We prove that $q\left(\frac{a}{f^n}\right)$ lies in $\bfA_{\bff}$ by case-by-case study:
		\[q\left(\frac{a}{f^n}\right)=q\left(\frac{af^m}{f^{n+m}}\right)
		=\frac{af^m}{\bff^{n+m}}\in\bfA_{\bff}\]
		if $f\in A^{\theta}$ and $\frac{a}{f^n}\in B^{\theta}$;
		\[q\left(\frac{a}{f^n}\right)
		=q\left(\frac{af^m}{f^{n+m}}\right)
		=\frac{af^m\sqrt{t}^{n+m}}{\bff^{n+m}}\in\bfA_{\bff}\]
		if $f\in A^{-\theta}$ and $\frac{a}{f^n}\in B^{\theta}$;
		\[q\left(\frac{a}{f^n}\frac{1}{\sqrt{t}}\right)
		=q\left(\frac{af^m}{f^{n+m}}\frac{1}{\sqrt{t}}\right)
		=\frac{af^m}{\sqrt{t}}\frac{1}{\bff^{n+m}}\in\bfA_{\bff}\]
		if $f\in A^{\theta}$ and $\frac{a}{f^n}\in B^{-\theta}$;
		\[q\left(\frac{a}{f^n}\frac{1}{\sqrt{t}}\right)=
		q\left(\frac{af^m}{f^{n+m}}\frac{1}{\sqrt{t}}\right)
		=\frac{af^m\sqrt{t}^{n+m-1}}{\bff^{n+m}}\in\bfA_{\bff}
		\]
		if $f\in A^{-\theta}$ and $\frac{a}{f^n}\in B^{-\theta}$ (remember that $n$ and $m$ are even).
		
		Finally, we prove that $\alpha$ and $\beta$ are mutually inverse. To see $\beta\circ\alpha=\id_{\bfA_{\bff}}$, we may restrict to $\bfA$ since the localization map $\bfA\to\bfA_{\bff}$ is an epimorphism of $k\left[t\right]$-algebras. Then we may restrict to the generator of $\bfA$ in Definition \ref{defn:contraction}. In this case, the coincidence is evident by construction of $\alpha$ and $\beta$. Conversely, we compute $\alpha\circ\beta$. We may again restrict to $B^\theta\cup\frac{1}{\sqrt{t}}B^{-\theta}$. Let $\frac{a}{f^n}\in B^\theta\cup B^{-\theta}$. We may and do assume $n$ even and take $m$ as in the former paragraph. Then we have
		\[(\alpha\circ\beta)\left(\frac{a}{f^n}\right)=\alpha\left(\frac{a}{\bff^{n}}\right)
		=\frac{a}{f^{n}}\]
		if $f\in A^{\theta}$ and $\frac{a}{f^n}\in B^{\theta}$;
		\[\begin{split}
			(\alpha\circ\beta)\left(\frac{a}{f^n}\right)
			&=(\alpha\circ\beta)\left(\frac{af^m}{f^{n+m}}\right)\\
			&=\alpha\left(\frac{af^m\sqrt{t}^{n+m}}{\bff^{n+m}}\right)\\
			&=af^m\sqrt{t}^{n+m}\left(\frac{1}{f}\frac{1}{\sqrt{t}}\right)^{n+m}\\
			&=\frac{a}{f^n}
		\end{split}
		\]
		if $f\in A^{-\theta}$ and $\frac{a}{f^n}\in B^{\theta}$ since $\frac{1}{f}\in B^{-\theta}$;
		\[\begin{split}
			(\alpha\circ\beta)\left(\frac{a}{f^n}\frac{1}{\sqrt{t}}\right)
			&=(\alpha\circ\beta)\left(\frac{af^m}{f^{n+m}}\frac{1}{\sqrt{t}}\right)\\
			&=\alpha\left(\frac{af^m}{\sqrt{t}}\frac{1}{\bff^{n+m}}\right)\\
			&=\frac{af^m}{\sqrt{t}}\frac{1}{f^{n+m}}\\
			&=\frac{a}{f^n}\frac{1}{\sqrt{t}}
		\end{split}
		\]
		if $f\in A^{\theta}$ and $\frac{a}{f^n}\in B^{-\theta}$;
		\[\begin{split}
			(\alpha\circ\beta)\left(\frac{a}{f^n}\frac{1}{\sqrt{t}}\right)
			&=\alpha\left(\frac{af^m\sqrt{t}^{n+m-1}}{\bff^{n+m}}\right)\\
			&=af^m\sqrt{t}^{n+m-1}\left(\frac{1}{f}\frac{1}{\sqrt{t}}\right)^{n+m}\\
			&=\frac{a}{f^n}\frac{1}{\sqrt{t}}
		\end{split}
		\]
		if $f\in A^{-\theta}$ and $\frac{a}{f^n}\in B^{-\theta}$. Therefore $\beta\circ\alpha$ is the identity on the generator. This completes the proof.
	\end{proof}
	
	\begin{ex}\label{ex:sl_2}
		Assume that $k$ is a $\bZ\left[1/2,\sqrt{-1}\right]$-algebra. Following \cite[Corollary 13.33]{MR4225278}, identify $\bP^1_k$ with the moduli scheme of ordered pairs of generators of $k$. Namely, for a commutative $k$-algebra $R$, the $R$-point set $\bP^1_k(R)$ of $\bP^1_k$ is naturally identified with the set of isomorphism classes of locally free $R$-modules $L$ of rank $1$ and pairs $(a_1,a_2)$ of elements of $L$ such that $a_1$ and $a_2$ generate $L$ as $R$-modules. We define an involution $\theta$ on $\bP^1_k$ by
		$(L,a_1,a_2)\mapsto (L,a_2,-a_1)$.
		Define affine open immersions $\Spec k\left[w_i\right]\hookrightarrow \bP^1_k$ ($i\in \{1,2\}$) by
		\[\begin{array}{cc}
			(k\left[w_1\right],1+\sqrt{-1}w_1,\sqrt{-1}+w_1),&
			(k\left[w_2\right],w_2+\sqrt{-1},w_2\sqrt{-1}+1).
		\end{array}\]
		They are $\theta$-stable. Moreover, the induced involutions on the two affine lines are given by $\theta:w_i\mapsto -w_i$. The intersection of these affine open subschemes is \[\Spec k\left[w^{\pm 1}_i\right].\]
		The transition map
		$\Spec k\left[w^{\pm 1}_1\right]\cong\Spec k\left[w^{\pm 1}_2\right]$
		of the two affine open subschemes on their intersection is given by $w_1\mapsto w^{-1}_2$. The elements $w_i$ and $w^{-1}_i$ generate the coordinate rings $k\left[w^{\pm 1}_i\right]$, and they belong to $k\left[w^{\pm 1}_i\right]^{-\theta}$. Therefore we can glue up contraction of these affine lines. In fact, one can identify the contraction algebras of $(A_i,\theta)$ with $k\left[t,v_i\right]$ by $\frac{w_i}{\sqrt{t}}\mapsto v_i$. In view of Variant \ref{var:localization}, the gluing isomorphism is given by
		\[k\left[t,v_1\right]_{tv_1}\cong k\left[t,v_2\right]_{tv_2};~v_1\mapsto \frac{1}{tv_2}\]
		(use the equality $\frac{1}{\sqrt{t}w_2}=\frac{\sqrt{t}}{w_2}\frac{1}{t}$).
		
		The resulting scheme is the open subscheme of 
		\[\bfcB\coloneqq\Proj k\left[t,x,y,z\right]/(x^2+y^2-tz^2)\]
		obtained by removing the closed subscheme defined by
		$(k,0,0,1)$ at $t=0$. In fact, for a commutative $k$-algebra $R$, the $R$-point set of $\cB$ is identified with the set of isomorphism classes of locally free $R$-modules $L$ of rank $1$ and triples $(a_1,a_2,a_3)$ of generators of $L$ such that
		\[a_1\otimes a_1+a_2\otimes a_2-ta_3\otimes a_3=0\]
		in $L\otimes_R L$ (see \cite[Lemma 01NA]{stacks}). Define two open immersions
		$\Spec k\left[t,v_i\right]\to \bfcB$
		by
		\begin{flalign}
			(k\left[t,v_1\right],1-tv^2_1,-\sqrt{-1}(tv^2_1+1),2\sqrt{-1}v_1),
			\label{eq:local_immersion_1}\\
			(k\left[t,v_2\right],tv^2_2-1,-\sqrt{-1}(1+tv^2_2),2\sqrt{-1}v_2).
			\label{eq:local_immersion_2}
		\end{flalign}
		In fact, they are identified with the principal affine open subschemes attached to the homogeneous polynomials $x+\sqrt{-1}y$ and $x-\sqrt{-1}y$ respectively. These maps satisfy the gluing condition. Moreover, the resulting map is an isomorphism on $t\neq 0$. The maps at $t=0$ are the open immersion from the affine line $\Spec k\left[v_i\right]$ into $\Proj k\left[x,y,z\right]/(x^2+y^2)$ defined by 
		\[\begin{array}{cc}
			(k\left[v_1\right],1,-\sqrt{-1},2\sqrt{-1}v_1),
			&(k\left[v_2\right],-1,-\sqrt{-1},2\sqrt{-1}v_2)
		\end{array}\]
		in terms of a similar moduli description. We remark that $\Proj k\left[x,y,z\right]/(x^2+y^2)$ is identified with the union of two projective lines intersecting at a single $k$-point by
		\[x^2+y^2=(x+\sqrt{-1}y)(x-\sqrt{-1}y).\]
		Each of the open immersions constructed above is onto the affine line obtained by removing the intersection from one of the two projective lines. One can switch the two lines by replacing $\sqrt{-1}$ with $-\sqrt{-1}$.
		
		We remark that the nasty coefficients $-1$ and $\sqrt{-1}$ in \eqref{eq:local_immersion_1} and \eqref{eq:local_immersion_2} are for the compatibility with Example \ref{ex:SO(2,1)} (see Remark \ref{rem:relation_with_gluing} for the details).
	\end{ex}
	
	As an application of our description of $\bfA$, we prove corresponding results to \cite[Propositions 4.2 and 4.3]{MR3797197} in a purely algebraic way. Let $I$ be the ideal of $A$ generated by $A^{-\theta}$. We note that $\oplus_{n\geq 0} I^n/I^{n+1}$ is naturally equipped with the structure of a graded $A/I$-algebra.

	\begin{prop}\label{prop:fiber_at_t=0}
		\begin{enumerate}
			\renewcommand{\labelenumi}{(\arabic{enumi})}
			\item We have $I^n=(A^{-\theta})^{n}\oplus (A^{-\theta})^{n+1}$ for every nonnegative integer $n$.
			\item There exists a natural isomorphism $\bfA_0\cong \oplus_{n\geq 0} I^n/I^{n+1}$ of $k$-algebras.
		\end{enumerate}
	\end{prop}
	
	\begin{proof}
		For (1), observe that $I^n$ is the ideal of $A$ generated by $(A^{-\theta})^{n}$ for each nonnegative integer $n$. Part (1) thus follows from $A=A^{\theta}\oplus A^{-\theta}$.
		
		To prove (2), notice that $\bfA_0\cong\oplus_{n\geq 0} (A^{-\theta})^n/(A^{-\theta})^{n+2}$
		by the expression \eqref{eq:bfA}. One can identify $\oplus_{n\geq 0} (A^{-\theta})^n/(A^{-\theta})^{n+2}$ with the graded algebra $\oplus_{n\geq 0} I^n/I^{n+1}$ by (1). This completes the proof.
	\end{proof}

	\begin{thm}\label{thm:smooth}
		Suppose that $A$ is smooth over $k$.
		\begin{enumerate}
			\renewcommand{\labelenumi}{(\arabic{enumi})}
			\item Let $k'$ be an arbitrary commutative $k$-algebra. Let $B$ and $\eta$ be the base change of $A$ and $\theta$ to $k'$ respectively. Then we have $k'\otimes_k\bfA\cong \bfB$.
			\item There exists an isomorphism $\bfA_0\cong \Sym_{A/I} I/I^2$ of $k$-algebras.
			\item The $k\left[t\right]$-algebra $\bfA$ is smooth.
		\end{enumerate}
	\end{thm}
	
	Towards the proof, we give preliminary observations on extended Rees algebras. 
	
	\begin{lem}\label{lem:smoothI}
		Let $p:C\to D$ be a surjective homomorphism of smooth commutative algebras over a commutative ring $R$. Write $J=\Ker p$. 
		\begin{enumerate}
			\renewcommand{\labelenumi}{(\arabic{enumi})}
			\item Let $R'$ be an arbitrary commutative $R$-algebra. We denote the base change $R'\otimes_R p$ by $p':C'\to D'$. Put $J'=\Ker p'$. Then we have a canonical isomorphism $(J')^n\cong R'\otimes_R J^n$ for every nonnegative integer $n$.
			\item We have a canonical isomorphism $\oplus_{n\geq 0} J^n/J^{n+1}\cong \Sym_D J/J^2$ of graded $D$-algebras.
			\item For every nonnegative integer $n$, $J^n$ is flat as an $R$-module.
			\item The extended Rees algebra $C\left[u,Ju^{-1}\right]$ is smooth over $R\left[u\right]$ if $R$ is Noetherian. 
		\end{enumerate}
	\end{lem}
	
	\begin{proof}
		The morphism $\Spec p$ is a regular immersion by \cite[Th\'eor\`eme (17.12.1)]{MR0238860}. Part (2) then follows from \cite[Proposition (16.9.8)]{MR0238860}.
		
		We next prove (1) and (3) simultaneously by induction on $n$. The assertions are clear if $n=0$. Assume $n\geq 1$. In view of (2), we have a canonical short exact sequence
		\begin{equation}
			0\to J^n\to J^{n-1}\to \Sym^{n-1}_D J/J^2\to 0.\label{eq:short_exact}
		\end{equation}
		Since $J/J^2$ is finitely generated and projective as a $D$-module from \cite[Proposition (16.9.8)]{MR0238860}, so is $\Sym^{n-1}_D J/J^2$ as a $D$-module. Since $D$ is flat over $R$, $\Sym^{n-1}_D J/J^2$ is a flat $R$-module. Since $J^{n-1}$ is a flat $R$-module by the induction hypothesis, so is $J^n$. The base change
		\[0\to R'\otimes_RJ^n\hookrightarrow R'\otimes_RJ^{n-1}\to R'\otimes_R \Sym^{n-1}_D J/J^2
		\to 0\]
		of the sequence \eqref{eq:short_exact} is exact since $\Sym^{n-1}_D J/J^2$ is flat over $R$. The middle term is identified with $(J')^{n-1}$ by the induction hypothesis. One can proceeds the induction by showing $R'\otimes_R \Sym^{n-1}_D J/J^2\cong (J')^{n-1}/(J')^n$. It is evident that $R'\otimes_R \Sym^{n-1}_D J/J^2\cong \Sym^{n-1}_{D'} R'\otimes_R (J/J^2)$. To compute $R'\otimes_R (J/J^2)$, recall that we have a short exact sequence
		\begin{equation}
			0\to J/J^2\to D\otimes_C\Omega^1_{C/R}\to \Omega^1_{D/R}\to 0.\label{eq:short_exact_2}
		\end{equation}
		Since $D$ is smooth over $R$, $\Omega^1_{D/R}$ is flat over $R$. Therefore the base change
		\[0\to R'\otimes_R (J/J^2)\to D'\otimes_{C'}\Omega^1_{C'/R'}\to \Omega^1_{D'/R'}\to 0\]
		of \eqref{eq:short_exact_2} is exact. Therefore we obtain $R'\otimes_R (J/J^2)'\cong J'/(J')^2$ and
		\[R'\otimes_R(J^{n-1}/J^n)
		\cong \Sym^{n-1}_{D'} (J'/(J')^2).
		\]
		Apply (2) to $p'$ to get $R'\otimes_R(J^{n-1}/J^n)\cong (J')^{n-1}/(J')^n$ as desired.
		
		Finally, we see (4). The proof will be completed by checking the following conditions:
		\begin{enumerate}
			\renewcommand{\labelenumi}{(\roman{enumi})}
			\item $C\left[u,Ju^{-1}\right]\otimes_{R\left[u\right]} R\left[u^{\pm 1}\right]$ is smooth over $R\left[u^{\pm 1}\right]$.
			\item $C\left[u,Ju^{-1}\right]\otimes_{R\left[u\right]} R\left[u\right]/(u)$ is smooth over $ R\left[u\right]/(u)$.
			\item $C\left[u,Ju^{-1}\right]$ is flat over $R\left[u\right]$.
			\item $C\left[u,Ju^{-1}\right]$ is finitely presented over $R\left[u\right]$ if $R$ is Noetherian.
		\end{enumerate}
		
		Condition (i) holds since $C\left[u,Ju^{-1}\right]\otimes_{R\left[u\right]} R\left[u^{\pm 1}\right]$ is identified with $C\left[u^{\pm 1}\right]$.
		
		For (ii), identify $C\left[u,Ju^{-1}\right]\otimes_{R\left[u\right]} R\left[u\right]/(u)$ and $R\left[u\right]/(u)$ with $\Sym_D J/J^2$ and $R$ respectively (recall (2)). Notice that $\Sym_D J/J^2$ is a smooth $D$-algebra by \cite[Proposition (16.9.8)]{MR0238860}. Since $D$ is smooth over $R$, so is $\Sym_{D} J/J^2$.
		
		To prove (iii), set
		$F_nC\left[u,Ju^{-1}\right]=C\left[u\right]\oplus \oplus_{1\leq i\leq n} J^i u^{-i}
		\subset C\left[u,Ju^{-1}\right]$
		for each nonnegative integer $n$. The passage to the direct limit reduces the proof to showing that $F_nC\left[u,Ju^{-1}\right]$ is flat as an $R\left[u\right]$-module for every $n$. For $0\leq m\leq n$, set
		$F^m_nC\left[u,Ju^{-1}\right]=\oplus_{i\geq -m} J^m u^{i}\oplus 
		\oplus_{m+1\leq i\leq n} J^i u^{-i}$.
		Then we have
		\[F^n_nC\left[u,Ju^{-1}\right]=\oplus_{i\geq -n} J^n u^{i},\]
		\[\begin{split}
			F^m_nC\left[u,Ju^{-1}\right]/F^{m+1}_nC\left[u,Ju^{-1}\right]&\cong \oplus_{i\geq -m} J^m/J^{m+1} u^i\\
			&\cong J^m/J^{m+1}\otimes_{D} D\left[u\right]\\
			&\cong \Sym^m_D J/J^2\otimes_D D\left[u\right]
		\end{split}\]
		for $0\leq m\leq n-1$ by (2). Since $\Sym^m_D J/J^2$ is a flat $D$-module, \[F^m_nC\left[u,Ju^{-1}\right]/F^{m+1}_nC\left[u,Ju^{-1}\right]\]
		is flat as a $D\left[u\right]$-module. Since $D$ is flat over $R$, $F^m_nC\left[u,Ju^{-1}\right]/F^{m+1}_nC\left[u,Ju^{-1}\right]$ is flat as an $R\left[u\right]$-module. We have also proved in (3) that $J^m$ is a flat $R$-module for $0\leq m\leq n$ by (3). We thus conclude that
		\[\begin{array}{cc}
			F^n_nC\left[u,Ju^{-1}\right],& F^m_nC\left[u,Ju^{-1}\right]/F^{m+1}_nC\left[u,Ju^{-1}\right]
		\end{array}\]
		are flat as $R\left[u\right]$-modules. A descending induction implies \[F^0_nC\left[u,Ju^{-1}\right]=F_nC\left[u,Ju^{-1}\right]\]
		is flat as an $R\left[u\right]$-module.
		
		Finally, we prove (iv). Assume that $R$ is Noetherian. In view of Hilbert's basis theorem, it suffices to show that $C\left[u,Ju^{-1}\right]$ is finitely generated over $R\left[u\right]$. It is evident by definition that $C\left[u,Ju^{-1}\right]$ is generated by $C$ and $Jt^{-1}$ as an $R\left[u\right]$-algebra. Since $C$ is of finite type over $R$, one can find a finite set $\{c_i\}$ of generators of $C$ over $R$. Since $R$ is Noetherian, so is $C$ by Hilbert's basis theorem. Therefore one can find a finite set $\{c'_j\}$ of generators of $J$ as a $C$-module. It is easy to show that $\{c_i\} \cup\{c'_j u^{-1}\}$ generates $C\left[u,Ju^{-1}\right]$ as an $R\left[u\right]$-algebra. In particular, $C\left[u,Ju^{-1}\right]$ is of finite type over $R\left[u\right]$.
	\end{proof}
	
	We next make an attempt to remove the Noether hypothesis in Lemma \ref{lem:smoothI} (4). To clarify the scheme-theoretic considerations below, we interpret and generalize the former lemma to a scheme-theoretic statement.
	
	\begin{lem}\label{lem:smoothII}
		Let $S$ be a scheme, and $j:Z\hookrightarrow Y$ be a closed immersion of smooth $S$-schemes with the ideal sheaf $\cJ\subset\cO_Y$. Then $\Spec \cO_Y\left[u,\cJ u^{-1}\right]$ is smooth over $\Spec\cO_S\left[u\right]$.
	\end{lem}
	
	Since $\cJ$ is a quasi-coherent $\cO_Y$-module, and the collection of quasi-coherent $\cO_Y$-modules is closed under formation of small colimits and finite limits,
	the extended Rees algebra $\cO_Y\left[u,\cJ u^{-1}\right]$ of $(\cO_Y,\cJ)$
	is a quasi-coherent $\cO_Y$-algebra. Therefore $\Spec \cO_Y\left[u,\cJ u^{-1}\right]$ makes a sense.
	
	\begin{proof}
		It is easy to show that $\cJ^n$ respects the flat base changes of $Y$ for $n\geq 0$. Namely, suppose that we are given a Cartesian diagram
		\[\begin{tikzcd}
			Z'\ar[r, "j'"]\ar[d, "p'"']&Y'\ar[d, "p"]\\
			Z\ar[r, "j"]&Y,
		\end{tikzcd}\] 
		where $p$ is flat. Let $\cJ'\subset\cO_{Y'}$ be the ideal attached to the closed immersion $j'$. Then we have a canonical isomorphism $p^\ast \cJ^n\cong (\cJ')^n$ for $n\geq 0$. We may therefore work locally in the \'etale (flat) topology of $Y$. In particular, we may work locally in the Zariski topology of $S$ to assume that $S$ is affine.
		
		Let $U$ be the complementary open subscheme to $Z$ in $Y$. Then we have
		\[\cO_U\left[u,\cJ|_U u^{-1}\right]\cong \cO_U\left[u^{\pm 1}\right]\]
		since $\cJ|_U=\cO_Y|_U$. This implies that $\Spec\cO_Y\left[u,\cJ u^{-1}\right]\times_Y U$ is smooth over $\Spec\cO_S\left[u\right]$. We next work on an open neighborhood of $Z$ in $Y$. In this case, we may replace $Z\hookrightarrow Y$ with the standard closed immersion
		\[\Spec \cO_S\left[u_1,\ldots,u_m\right]\hookrightarrow
		\Spec \cO_S\left[u_1,u_2,\ldots,u_n\right]
		\]
		of affine spaces over $S$ for some $m\leq n$ by \cite[Corollaire (17.12.2)]{MR0238860}. In this case, we may replace $S$ with $\Spec\bZ$ by Lemma \ref{lem:smoothI} (1) since we are currently assuming $S$ to be affine. The assertion now follows from Lemma \ref{lem:smoothI} (4).
	\end{proof}
	
	\begin{proof}[Proof of Theorem \ref{thm:smooth}]
		Recall that $A/I$ is a smooth $k$-algebra by \cite[Proof of Lemma 3.1.1]{MR4627704}. Then (2) follows from Proposition \ref{prop:fiber_at_t=0} and Lemma \ref{lem:smoothI} (2).
		
		To prove (1) and (3), observe that we have an isomorphism
		\begin{equation}
			\bfA\otimes_{k\left[t\right]} k\left[\sqrt{t}\right]\cong
			A\left[\sqrt{t},I\sqrt{t}^{-1}\right]\label{eq:rees}
		\end{equation}
		by a similar argument to Lemma \ref{lem:relation} (3) (use the equalities of \eqref{eq:bfA} and Proposition \ref{prop:fiber_at_t=0} (1)). For (1), we also note that there is a canonical homomorphism $k'\otimes_k\bfA\to \bfB$ by definitions. Part (1) (resp.~(3)) now follows from Lemma \ref{lem:smoothI} (1) (resp.~Lemma \ref{lem:smoothII}) through the faithfully flat descent with respect to $k\left[t\right]\subset k\left[\sqrt{t}\right]$.
	\end{proof}
	
	As we can easily see in examples, the fiber of $\bfX$ at $t=0$ is disconnected in general even if $X$ is connected. This may sometimes cause difficulties in analysis of geometric structures of $\bfX$ (cf.~Section \ref{sec:quotient}). We can resolve this problem in practice by killing additional components at $t=0$. For generalities on this refined object, let us note that one can literally generalize the argument of \cite[Section 5.12]{MR0228502} to obtain the following assertion:
	
	\begin{lem}\label{lem:affine}
		Let $j:V\hookrightarrow Y$ be an open immersion of $k\left[u\right]$-schemes satisfying the following properties:
		\begin{enumerate}
			\renewcommand{\labelenumi}{(\roman{enumi})}
			\item $j\otimes_{k\left[u\right]} k\left[u^{\pm 1}\right]$ is an isomorphism.
			\item The map $j\otimes_{k\left[u\right]} k\left[u\right]/(u)$ is a closed immersion.
		\end{enumerate}
		Then $j$ is an affine open immersion.
	\end{lem}
	
	For its applications to $\bfX$, let $x_0$ be a $\theta$-invariant $k$-point of $X$. Regard $x_0$ as a homomorphism $A\to k$ and take its contraction to obtain a $k\left[t\right]$-point $\bfx_0$ of $\bfX$. If $X$ is smooth, we can apply \cite[Corollaire (15.6.5)]{MR0217086} to $(\bfX,\bfx_0)$ to obtain an open subscheme of $\bfX$ since $\bfX$ is smooth and affine over $k\left[t\right]$ in this case. We will denote this open subscheme by $\bfX^\circ$. We define $(X^\theta)^\circ$ in a similar way. If $X=G$ is a group $k$-scheme, we will always assume $x_0$ to be the unit section in this paper. In particular, these notations will not conflict with \cite[D\'efinition 3.1]{MR0234961}.
	
	\begin{cor}\label{cor:affine}
		Assume the following conditions:
		\begin{enumerate}
			\renewcommand{\labelenumi}{(\roman{enumi})}
			\item $X$ is smooth over $k$.
			\item The fibers of $X$ are connected.
			\item The open subscheme $(X^\theta)^\circ\subset X^\theta$ is closed. 
		\end{enumerate}
		Then the open subscheme $\bfX^\circ\subset\bfX$ attached to $\bfx_0$ is affine.
	\end{cor}
	
	We end this section with study of preservation of tensor products.
	
	\begin{cons}
		For commutative $k$-algebras with involutions $(A,\theta)$ and $(B,\eta)$, define a canonical homomorphism
		\[i_{A,B}:\bfA\otimes_{k\left[t\right]} \bfB\to A\left[\sqrt{t}^{\pm 1}\right]\otimes_{k\left[\sqrt{t}^{\pm 1}\right]}
		B \left[\sqrt{t}^{\pm 1}\right]\]
		by the componentwise embedding:
		\[\begin{split}
			\bfA\otimes_{k\left[t\right]} \bfB
			&\to (\bfA\otimes_{k\left[t\right]} \bfB)\otimes_{k\left[t\right]}
			k\left[\sqrt{t}^{\pm 1}\right]\\
			&\cong \left(\bfA\otimes_{k\left[t\right]} k\left[\sqrt{t}^{\pm 1}\right]\right)
			\otimes_{k\left[\sqrt{t}^{\pm 1}\right]} 
			\left(\bfB\otimes_{k\left[t\right]} k\left[\sqrt{t}^{\pm 1}\right]\right)\\
			&\cong A\left[\sqrt{t}^{\pm 1}\right]\otimes_{k\left[\sqrt{t}^{\pm 1}\right]}
			B \left[\sqrt{t}^{\pm 1}\right],
		\end{split}\]
		where the first map is given by the unit. We define 
		\[i_{A,B,C}:
		\bfA\otimes_{k\left[t\right]} \bfB \otimes_{k\left[t\right]}\bfC
		\to A\left[\sqrt{t}^{\pm 1}\right]\otimes_{k\left[\sqrt{t}^{\pm 1}\right]}
		B \left[\sqrt{t}^{\pm 1}\right]\otimes_{k\left[\sqrt{t}^{\pm 1}\right]}
		C \left[\sqrt{t}^{\pm 1}\right]
		\]
		for an additional commutative $k$-algebra with involution $(C,\zeta)$ in a similar way.
	\end{cons}

	\begin{cond}\label{cond:flat}
		The structure homomorphism $k\left[t\right]\to\bfA$ is flat.
	\end{cond}
	
	\begin{ex}\label{ex:flat_smooth}
		Condition \ref{cond:flat} holds if $A$ is smooth over $k$ by Theorem \ref{thm:smooth}
	\end{ex}
	
	\begin{ex}\label{ex:flat_field}
		Condition \ref{cond:flat} holds if $k=F$ is a field. In fact, regard $\bfA$ as an $F\left[t\right]$-submodule of $A\left[\sqrt{t}^{\pm 1}\right]$ to see that $\bfA$ is a torsion-free $F\left[t\right]$-module. Since $F\left[t\right]$ is a PID, $\bfA$ is flat over $F\left[t\right]$.
	\end{ex}
	
	\begin{ex}\label{ex:flatbc}
		Suppose that Condition \ref{cond:flat} is satisfied. Then for a flat homomorphism $k\to k'$ of commutative rings, the contraction algebra for $A\otimes_k k'$ satisfies Condition \ref{cond:flat} by Proposition \ref{prop:flatbc}.
	\end{ex}

	\begin{prop}\label{prop:monoidal}
		For $i\in\{1,2,3\}$, let $(A_i,\theta_i)$ be commutative $k$-algebras with involutions. Set
		$(A_{ij},\theta_{ij})=(A_i\otimes_k A_j,\theta_i\otimes_k \theta_j)$
		for $(i,j)\in\{1,2,3\}^2$, and
		\[(A_{123},\theta_{123})=(A_1\otimes_k A_2\otimes_k A_3,
		\theta_1\otimes_k \theta_2\otimes_k \theta_3).\]
		\begin{enumerate}
			\renewcommand{\labelenumi}{(\arabic{enumi})}
			\item The composition of the canonical isomorphism
			\begin{equation}
				A_1\left[\sqrt{t}^{\pm 1}\right]\otimes_{k\left[\sqrt{t}^{\pm 1}\right]}
				A_2 \left[\sqrt{t}^{\pm 1}\right]
				\cong A_{12}\left[\sqrt{t}^{\pm 1}\right]\label{eq:monoidal}
			\end{equation}
			with $i_{A_1,A_2}$ is surjective onto $\bfA_{12}$. We denote the resulting map \[\bfA_1\otimes_{k\left[t\right]} \bfA_2\twoheadrightarrow\bfA_{12}\]
			by $I_{A_1,A_2}$.
			\item Th map $I_{A_1,A_2}$ is natural in both $A_1$ and $A_2$.
			\item The compositions of $i_{A_1,A_2,A_3}$ with the canonical isomorphisms
			\begin{flalign*}
				&A_1\left[\sqrt{t}^{\pm 1}\right]\otimes_{k\left[\sqrt{t}^{\pm 1}\right]}
				A_2 \left[\sqrt{t}^{\pm 1}\right]\otimes_{k\left[t\right]} A_3\left[\sqrt{t}^{\pm 1}\right]\\
				&\cong A_{12}\left[\sqrt{t}^{\pm 1}\right]\otimes_{k\left[\sqrt{t}^{\pm 1}\right]}
				A_3 \left[\sqrt{t}^{\pm 1}\right]\\
				&\cong A_{123}\left[\sqrt{t}^{\pm 1}\right],
			\end{flalign*}
			\begin{flalign*}
				&A_1\left[\sqrt{t}^{\pm 1}\right]\otimes_{k\left[\sqrt{t}^{\pm 1}\right]}
				A_2 \left[\sqrt{t}^{\pm 1}\right]\otimes_{k\left[t\right]} A_3\left[\sqrt{t}^{\pm 1}\right]\\
				&\cong A_1\left[\sqrt{t}^{\pm 1}\right]\otimes_{k\left[\sqrt{t}^{\pm 1}\right]}
				A_{23} \left[\sqrt{t}^{\pm 1}\right]\\
				&\cong A_{123}\left[\sqrt{t}^{\pm 1}\right]
			\end{flalign*}
			coincide with the compositions of the surjective maps
			\[\bfA_1\otimes_{k\left[t\right]} \bfA_2\otimes_{k\left[t\right]} \bfA_3
			\xrightarrow{I_{A_1,A_2}\otimes_{k\left[t\right]} \bfA_3}
			\bfA_{12}\otimes_{k\left[t\right]} \bfA_3
			\overset{I_{A_{12},A_3}}{\to} \bfA_{123},\]
			\[\bfA_1\otimes_{k\left[t\right]} \bfA_2\otimes_{k\left[t\right]} \bfA_3
			\xrightarrow{\bfA_1\otimes_{k\left[t\right]} I_{A_2,A_3}}
			\bfA_1\otimes_{k\left[t\right]} \bfA_{23}
			\overset{I_{A_{1},A_{23}}}{\to} \bfA_{123}\]
			with the inclusion
			$\bfA_{123}\subset A_{123}\left[\sqrt{t}^{\pm 1}\right]$
			respectively.
			\item Let $C:A_{12}\cong A_{21}$ denote the canonical isomorphism. Then the diagram
			\[\begin{tikzcd}
				\bfA_1\otimes_{k\left[t\right]} \bfA_2\ar[rr, "\sim"]\ar[d, "I_{A_1,A_2}"']
				&&\bfA_2\otimes_{k\left[t\right]} \bfA_1
				\ar[d, "I_{A_2,A_1}"]\\
				\bfA_{12}\ar[rr, "\bfC"]\ar[d, hook]
				&&\bfA_{21}\ar[d, hook]\\
				A_{12}\left[\sqrt{t}^{\pm 1}\right]
				\ar[rr, "{C\otimes_k k\left[\sqrt{t}^{\pm 1}\right]}"]
				&&A_{21}\left[\sqrt{t}^{\pm 1}\right]
			\end{tikzcd}\]
			commutes, where the upper horizontal arrow is the canonical isomorphism.
			\item If one of $(A_1,\theta_1)$ and $(A_2,\theta_2)$ satisfies Condition \ref{cond:flat} then $i_{A_1,A_2}$ is injective. In particular, $I_{A_1,A_2}$ is an isomorphism of $k\left[t\right]$-algebras.
			\item If two of $(A_1,\theta_1)$, $(A_2,\theta_2)$, and $(A_3,\theta_3)$ satisfy Condition \ref{cond:flat} then $i_{A_1,A_2,A_3}$ is injective.
			\item Put $A_2=k$ and $\theta_2=\id_k$. Write
			\[\begin{array}{c}
				r:A_1\otimes_k k\cong A_1\\
				l:k\otimes_k A_3\cong A_3\\
				\mu_{A_1,k}:A_{1} \left[\sqrt{t}^{\pm 1}\right]
				\otimes_{k\left[\sqrt{t}^{\pm 1}\right]} k\left[\sqrt{t}^{\pm 1}\right] \cong A_{12}\left[\sqrt{t}^{\pm 1}\right]\\
				\mu_{k,A_3}:k\left[\sqrt{t}^{\pm 1}\right]\otimes_{k\left[\sqrt{t}^{\pm 1}\right]}
				A_{3} \left[\sqrt{t}^{\pm 1}\right]\cong A_{23}\left[\sqrt{t}^{\pm 1}\right]
			\end{array}\]
			for the canonical isomorphisms. Then the diagrams
			\[\begin{tikzcd}
				\bfA_1\otimes_{k\left[t\right]} k\left[t\right]
				\ar[r, "I_{A_1,k}"]\ar[d, hook, "i_{A_1,k}"']
				&\bfA_{12}\ar[r, "\bfr"]\ar[d,hook]&\bfA_1\ar[d, hook]\\
				A_{1} \left[\sqrt{t}^{\pm 1}\right]
				\otimes_{k\left[\sqrt{t}^{\pm 1}\right]} k\left[\sqrt{t}^{\pm 1}\right] 
				\ar[r, "\mu_{A_1,k}"]
				&A_{12}\left[\sqrt{t}^{\pm 1}\right]
				\ar[r, "{r\left[\sqrt{t}^{\pm 1}\right]}"]&A_1\left[\sqrt{t}^{\pm 1}\right]
			\end{tikzcd}\]
			\[\begin{tikzcd}
				k\left[t\right]\otimes_{k\left[t\right]} \bfA_3
				\ar[r, "I_{k,A_3}"]\ar[d, hook, "i_{k,A_3}"']
				&\bfA_{23}\ar[r, "\bfl"]\ar[d,hook]&\bfA_3\ar[d, hook]\\
				k\left[\sqrt{t}^{\pm 1}\right]\otimes_{k\left[\sqrt{t}^{\pm 1}\right]}
				A_{3} \left[\sqrt{t}^{\pm 1}\right]\ar[r, "\mu_{k,A_3}"]
				&A_{23}\left[\sqrt{t}^{\pm 1}\right]
				\ar[r, "{l\left[\sqrt{t}^{\pm 1}\right]}"]&A_3\left[\sqrt{t}^{\pm 1}\right]
			\end{tikzcd}\]
			commute.
		\end{enumerate}
	\end{prop}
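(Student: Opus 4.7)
The plan is to treat the seven parts in decreasing order of difficulty. Parts (2), (3), (4), and (7) are formal consequences of the construction of $i_{A,B}$ and $I_{A,B}$: each asserts the commutativity of a diagram obtained by applying the coherence isomorphisms for the tensor product (associator, symmetry, unitors) to the base change, so it suffices to verify commutativity after applying the faithful inclusion into $A\left[\sqrt{t}^{\pm 1}\right]\otimes_{k\left[\sqrt{t}^{\pm 1}\right]}\cdots$, where everything is the standard symmetric monoidal structure on $k\left[\sqrt{t}^{\pm 1}\right]$-algebras. Hence I would dispatch them after (1), (5), and (6) are settled.

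For (1), I would use the decomposition $A_i=A_i^{\theta_i}\oplus A_i^{-\theta_i}$ to write $A_{12}^{\theta_{12}}=A_1^{\theta_1}\otimes_k A_2^{\theta_2}\oplus A_1^{-\theta_1}\otimes_k A_2^{-\theta_2}$ and $A_{12}^{-\theta_{12}}=A_1^{\theta_1}\otimes_k A_2^{-\theta_2}\oplus A_1^{-\theta_1}\otimes_k A_2^{\theta_2}$. By Definition \ref{defn:contraction}, it is enough to check that the image of $i_{A_1,A_2}$ on pairs of generators of $\bfA_i$ (namely elements of $A_i^{\theta_i}$ and of $\tfrac{1}{\sqrt{t}}A_i^{-\theta_i}$) lies in $\bfA_{12}$ and exhausts its generators. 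This is a four-case verification: the troublesome case is $\tfrac{a_1}{\sqrt{t}}\otimes\tfrac{a_2}{\sqrt{t}}\mapsto\tfrac{a_1\otimes a_2}{t}$ with $a_i\in A_i^{-\theta_i}$, and it lies in $\bfA_{12}$ because $\tfrac{a_1\otimes a_2}{t}=\tfrac{a_1\otimes 1}{\sqrt{t}}\cdot\tfrac{1\otimes a_2}{\sqrt{t}}$ with each factor a generator (since $a_1\otimes 1,\,1\otimes a_2\in A_{12}^{-\theta_{12}}$). Surjectivity onto $\bfA_{12}$ is similar: every generator of $\bfA_{12}$ either lies in the image of $\bfA_1\otimes_{k\left[t\right]}\bfA_2$ directly (elements of $A_1^{\theta_1}\otimes A_2^{\theta_2}$ and the mixed $\tfrac{1}{\sqrt{t}}$-terms) or is recovered by multiplying by $t$ (for $b_1\otimes b_2$ with $b_i\in A_i^{-\theta_i}$).

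The main obstacle is (5), from which (6) then follows by the same mechanism applied twice. By construction $i_{A_1,A_2}$ factors as the unit $\bfA_1\otimes_{k\left[t\right]}\bfA_2\to (\bfA_1\otimes_{k\left[t\right]}\bfA_2)\otimes_{k\left[t\right]} k\left[\sqrt{t}^{\pm 1}\right]$ followed by the canonical identification with $A_1\left[\sqrt{t}^{\pm 1}\right]\otimes_{k\left[\sqrt{t}^{\pm 1}\right]}A_2\left[\sqrt{t}^{\pm 1}\right]$. The second step is an isomorphism, so injectivity of $i_{A_1,A_2}$ is equivalent to the localization map along $k\left[t\right]\to k\left[\sqrt{t}^{\pm 1}\right]$ being injective on $\bfA_1\otimes_{k\left[t\right]}\bfA_2$, which in turn is equivalent to $t$ being a non-zero-divisor on this module. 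Now $\bfA_2\subset A_2\left[\sqrt{t}^{\pm 1}\right]$ and $t$ is a unit in the ambient ring, so multiplication by $t$ on $\bfA_2$ is injective; tensoring the short exact sequence $0\to\bfA_2\xrightarrow{t}\bfA_2\to\bfA_2/(t)\to 0$ with the flat $k\left[t\right]$-module $\bfA_1$ yields that $t$ acts injectively on $\bfA_1\otimes_{k\left[t\right]}\bfA_2$, which is what we need. Combining with (1) gives that $I_{A_1,A_2}$ is an isomorphism.

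For (6), the analogous argument works: $i_{A_1,A_2,A_3}$ factors through tensoring with $k\left[\sqrt{t}^{\pm 1}\right]$, and I would chain the flatness argument of (5). If for concreteness $\bfA_1,\bfA_2$ both satisfy Condition \ref{cond:flat}, then $\bfA_1\otimes_{k\left[t\right]}\bfA_2$ is flat over $k\left[t\right]$ (since flatness is preserved by tensor products of flat modules), so the same exact-sequence argument applied to $\bfA_3\hookrightarrow A_3\left[\sqrt{t}^{\pm 1}\right]$ shows $t$ is a non-zero-divisor on the triple tensor, yielding injectivity of $i_{A_1,A_2,A_3}$. The remaining cases (flatness on $\bfA_1,\bfA_3$ or $\bfA_2,\bfA_3$) are obtained by reassociating with (3) and the symmetry of (4). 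With (1), (5), (6) in hand, (2), (3), (4), (7) follow by reducing to the injection into the $\sqrt{t}^{\pm1}$-localization and invoking symmetric-monoidal coherence there.
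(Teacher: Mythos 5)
Your proposal is correct and follows essentially the same route as the paper: part (1) by the same comparison of generators (with the key containment $\tfrac{1}{\sqrt{t}}A_1^{-\theta_1}\otimes\tfrac{1}{\sqrt{t}}A_2^{-\theta_2}\subset\bigl(\tfrac{1}{\sqrt{t}}A_1^{-\theta_1}\otimes A_2^{\theta_2}\bigr)\cdot\bigl(A_1^{\theta_1}\otimes\tfrac{1}{\sqrt{t}}A_2^{-\theta_2}\bigr)$ and recovery of $A_1^{-\theta_1}\otimes A_2^{-\theta_2}$ after multiplying by $t$), parts (2)--(4) and (7) by checking the identities inside the ambient Laurent polynomial rings, and parts (5)--(6) via flatness of one factor. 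The only cosmetic difference is in (5): the paper applies $\bfA_1\otimes_{k\left[t\right]}(-)$ directly to the inclusion $\bfA_2\hookrightarrow A_2\left[\sqrt{t}^{\pm 1}\right]$, whereas you reduce injectivity of the unit map to $t$-torsion-freeness of $\bfA_1\otimes_{k\left[t\right]}\bfA_2$ (note $k\left[t\right]\to k\left[\sqrt{t}^{\pm 1}\right]$ is flat but not literally a localization) and deduce it from the sequence $0\to\bfA_2\xrightarrow{t}\bfA_2\to\bfA_2/(t)\to 0$; both arguments hinge on the same flatness hypothesis.
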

	
	\begin{proof}
		For (1), it will suffice to compare the generators 
		\[A^{\theta_1}_1\otimes 1,~\frac{1}{\sqrt{t}}A^{-\theta_1}_1\otimes 1,~
		1\otimes A^{\theta_2}_2,
		~\frac{1}{\sqrt{t}}A^{-\theta_1}\otimes \frac{1}{\sqrt{t}}A^{-\theta_2}_2\]
		of $\bfA_1\otimes_{k\left[t\right]} \bfA_2$ and
		\[A^{\theta_1}_1\otimes A^{\theta_2}_2,~A^{-\theta_1}_1\otimes A^{-\theta_2}_2,~
		\frac{1}{\sqrt{t}}A^{-\theta_1}_1\otimes A^{\theta_2}_2,~
		A^{\theta_1}_1\otimes \frac{1}{\sqrt{t}}A^{-\theta_2}_2\]
		of $\bfA_{12}$. Since an involution fixes the unit in general, we have
		\[A^{\theta_1}_1\otimes 1\subset A^{\theta_1}_1\otimes A^{\theta_2}_2,~
		1\otimes A^{\theta_2}_2\subset A^{\theta_1}_1\otimes A^{\theta_2}_2\]
		\[\frac{1}{\sqrt{t}}A^{-\theta_1}_1\otimes 1\subset 
		\frac{1}{\sqrt{t}}A^{-\theta_1}_1\otimes A^{\theta_2}_2,~
		1\otimes A^{\theta_2}_2\subset A^{\theta_1}_1\otimes \frac{1}{\sqrt{t}}A^{-\theta_2}_2\]
		\[\frac{1}{\sqrt{t}}A^{-\theta_1}_1\otimes \frac{1}{\sqrt{t}}A^{-\theta_2}_2
		\subset \left(\frac{1}{\sqrt{t}}A^{-\theta_1}_1\otimes A^{\theta_2}_2\right)
		\cdot\left(A^{\theta_1}_1\otimes \frac{1}{\sqrt{t}}A^{-\theta_2}_2\right).
		\]
		These formulas of containment show that the composite map of \eqref{eq:monoidal} with $i_{A_1,A_2}$ factors through $\bfA_{12}$. This map is onto $\bfA_{12}$ from
		\[A^{\theta_1}_1\otimes A^{\theta_2}_2=(A^{\theta_1}_1\otimes 1)\cdot (1\otimes A^{\theta_2}_2)\]
		\[A^{-\theta_1}_1\otimes A^{-\theta_2}_2
		=t\cdot \frac{1}{\sqrt{t}}A^{-\theta}\otimes \frac{1}{\sqrt{t}}A^{-\theta_2}_2
		\]
		\[\frac{1}{\sqrt{t}}A^{-\theta_1}_1\otimes A^{\theta_2}_2
		=\left(\frac{1}{\sqrt{t}}A^{-\theta_1}_1\otimes 1\right)
		\cdot(1\otimes A^{\theta_2}_2)\]
		\[A^{\theta_1}_1\otimes \frac{1}{\sqrt{t}}A^{-\theta_2}_2
		=\left(A^{\theta_1}_1\otimes 1\right)\cdot
		\left(1\otimes \frac{1}{\sqrt{t}}A^{-\theta_2}_2\right).
		\]
		Part (2) follows by seeing values of the map in $A_{12}\left[\sqrt{t}^{\pm 1}\right]$.
		
		Parts (3), (4), and (7) are evident by definitions. Part (6) will follow from (3) and (5). Therefore the proof will be completed by showing (5). The map $i_{A_1,A_2}$ is identified with the composition of the sequence of canonical maps
		\[\begin{split}
			\bfA_1\otimes_{k\left[t\right]} \bfA_2
			&\to \bfA_1\otimes_{k\left[t\right]} A_2\left[\sqrt{t}^{\pm 1}\right]\\
			&\cong \left(\bfA_1\otimes_{k\left[t\right]} k\left[\sqrt{t}^{\pm 1}\right]\right)
			\otimes_{k\left[\sqrt{t}^{\pm 1}\right]}
			A_2 \left[\sqrt{t}^{\pm 1}\right]\\
			&\cong A_1\left[\sqrt{t}^{\pm 1}\right]\otimes_{k\left[\sqrt{t}^{\pm 1}\right]}
			A_2 \left[\sqrt{t}^{\pm 1}\right].
		\end{split}\]
		The first map is the base change of the inclusion $\bfA_2\hookrightarrow A_2\left[\sqrt{t}^{\pm 1}\right]$. The second map is the canonical isomorphism using the $k\left[\sqrt{t}^{\pm 1}\right]$-algebra structure of $A_2\left[\sqrt{t}^{\pm 1}\right]$. The isomorphism in the third row is obtained by Lemma \ref{lem:relation}. If $(A_1,\theta_1)$ satisfies Condition \ref{cond:flat}, then the first map is injective. This proves that $i_{A_1,A_2}$ is injective if $(A_1,\theta_1)$ satisfies Condition \ref{cond:flat}. One can see that $i_{A_1,A_2}$ is injective if $(A_2,\theta_2)$ satisfies Condition \ref{cond:flat} in a similar way. This proves (5).
	\end{proof}
	
	\begin{cor}\label{cor:monoidal}
		The assignment $(A,\theta)\rightsquigarrow \bfA$ determines a lax symmetric monoidal functor from the symmetric monoidal category of commutative $k$-algebras with involutions to that of $k\left[t\right]$-algebras. Moreover, it restricts to a symmetric monoidal functor from the symmetric monoidal category of commutative $k$-algebras with involutions satisfying Condition \ref{cond:flat} to that of flat $k\left[t\right]$-algebras.
	\end{cor}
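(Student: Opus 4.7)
The plan is to read Corollary \ref{cor:monoidal} as a repackaging of Proposition \ref{prop:monoidal} into the language of (lax) symmetric monoidal functors, so most of the work has already been done and what remains is bookkeeping. First I would verify functoriality of $(A,\theta)\rightsquigarrow \bfA$: given a morphism $f\colon (A,\theta)\to (B,\eta)$ of commutative $k$-algebras with involutions, the canonical base change $f\otimes_k k[\sqrt{t}^{\pm 1}]\colon A[\sqrt{t}^{\pm 1}]\to B[\sqrt{t}^{\pm 1}]$ sends $A^\theta$ to $B^\eta$ and $A^{-\theta}$ to $B^{-\eta}$, hence carries the generators $A^\theta$ and $\tfrac{1}{\sqrt{t}}A^{-\theta}$ of $\bfA$ into $\bfB$; this yields a $k[t]$-algebra homomorphism $\bfA\to \bfB$, and the assignment is evidently compatible with composition and identities.

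Next, I would assemble the lax symmetric monoidal structure. The lax structure maps are the surjections $I_{A_1,A_2}\colon \bfA_1\otimes_{k[t]}\bfA_2\twoheadrightarrow \bfA_{12}$ from Proposition \ref{prop:monoidal}(1), and their naturality in both arguments is part (2) of that proposition. For the unit, the symmetric monoidal unit on the source is $(k,\id_k)$, and by Example \ref{ex:trivial} its contraction algebra is $k[t]$, which is the unit on the target; the unit structure map is the identity $k[t]\to \bfk=k[t]$.

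The remaining coherences — the associator diagram, left/right unitors, and the symmetry — all must be checked as equalities of $k[t]$-algebra homomorphisms. The key point is that each such diagram becomes a diagram of canonical maps between subalgebras of $A_{ij\cdots}[\sqrt{t}^{\pm 1}]$, a genuine symmetric monoidal category whose coherences are tautological. Concretely, associativity is Proposition \ref{prop:monoidal}(3) (both composites $\bfA_1\otimes\bfA_2\otimes\bfA_3\to \bfA_{123}$ agree), the symmetry axiom is Proposition \ref{prop:monoidal}(4), and unitality is Proposition \ref{prop:monoidal}(7). So the lax symmetric monoidal structure is established.

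For the second assertion, I need to restrict to the full subcategory of objects satisfying Condition \ref{cond:flat} and upgrade the structure from lax to strong. By Proposition \ref{prop:monoidal}(5), if either $(A_1,\theta_1)$ or $(A_2,\theta_2)$ satisfies Condition \ref{cond:flat}, then $I_{A_1,A_2}$ is an isomorphism; in particular, when both do, we have $\bfA_{12}\cong \bfA_1\otimes_{k[t]}\bfA_2$. Since a tensor product of flat $k[t]$-modules is flat, $\bfA_{12}$ then itself satisfies Condition \ref{cond:flat}, so the subcategory is closed under $\otimes_k$ and the lax maps are isomorphisms on it, which is precisely what it means for the restricted functor to be strong symmetric monoidal. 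The only potential obstacle is the verification of the coherence diagrams themselves, but these reduce to routine diagram chases inside $A_{ij\cdots}[\sqrt{t}^{\pm 1}]$ once one recognises that all structure maps fit into the commutative squares constructed in Proposition \ref{prop:monoidal}.
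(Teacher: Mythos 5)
Your proposal is correct and follows essentially the same route as the paper: both reduce the coherence checks to the fact that $-\otimes_k k\left[\sqrt{t}^{\pm 1}\right]$ is symmetric monoidal (so that all diagrams can be verified inside the Laurent polynomial algebras), use Example \ref{ex:trivial} for unitality, and obtain the strong monoidal restriction from Proposition \ref{prop:monoidal} (5) together with the fact that a tensor product of flat $k\left[t\right]$-algebras is flat. You are merely more explicit than the paper in itemizing which parts of Proposition \ref{prop:monoidal} supply which coherence, which is fine.
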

	
	\begin{proof}
		We can reduce the first assertion to the fact that $-\otimes_k k\left[\sqrt{t}^{\pm 1}\right]$ is symmetric monoidal by seeing the relations in the Laurent polynomial algebras of tensor products of commutative $k$-algebras with variable $\sqrt{t}$. For the unitality, recall Example \ref{ex:trivial} if necessary. For the latter statement, suppose that we are given pairs $(A,\theta)$ and $(B,\eta)$ of commutative $k$-algebras with involutions satisfying Condition \ref{cond:flat}. Put $(C,\zeta)=(A\otimes_k B,\theta\otimes_k\eta)$. Then we have a natural isomorphism $\bfA\otimes_{k\left[t\right]} \bfB\cong \bfC$. Since $\bfA$ and $\bfB$ are flat over $k\left[t\right]$, so is $\bfC$. In other words, $(A\otimes_k B,\theta\otimes_k\eta)$ satisfies Condition \ref{cond:flat}. The latter assertion now follows from the former one.
	\end{proof}
	
	\begin{rem}[{\cite[Remark 3.2.1]{MR4130851}}]\label{rem:gluing}
		If one would like to work over the projective line $\bP^1_k$ over $k$, define the corresponding contraction schemes over the two principal open affine lines in $\bP^1_k$. Let $\bar{A}_t$ be the $k\left[t^{\pm 1}\right]$-algebra obtained from $A_t$ by switching the action of $t$ with $t^{-1}$. Then the map $\bar{A}_t\to A_t$ defined by
		\[\begin{array}{cc}
			at^n\mapsto at^{-n}&(a\in A^{\theta})\\
			a\frac{t^n}{\sqrt{t}}\mapsto a\frac{t^{-n+1}}{\sqrt{t}}&(a\in A^{-\theta})
		\end{array}\]
		is a $k\left[t^{\pm 1}\right]$-algebra isomorphism (cf.~\cite[the last line of Section 3]{MR3797197}). Using this map, one can glue the schemes over the two principal open affine lines of $\bP^1_k$ to obtain a scheme over $\bP^1_k$.
	\end{rem}
	
	\begin{rem}
		If one wishes to work with $n$-contraction over the projective $n$-scheme $\bP^n_k$, work on the principal open affine $n$-spaces in $\bP^n_k$, and glue up the contraction schemes on them in a similar way to Remark \ref{rem:gluing}.
	\end{rem}

	\section{Hopf structure}\label{sec:hopf}
	
	Let $k$ be a commutative $\bZ\left[1/2\right]$-algebra. Let $A$ be a commutative Hopf $k$-algebra with an involution. Let
	\[\begin{array}{ccc}
		\Delta:A\to A\otimes_k A,&\epsilon:A\to k,&i:A\to A\
	\end{array}\]
	be the comultiplication, counit, and antipode of $A$ respectively. We will use the Sweedler notation of \cite[Section 1.2]{MR0252485}.
	
	In this section, we put the structure of a Hopf $k\left[t\right]$-algebra on $\bfA$. We also give a group theoretic analog of Corollary \ref{cor:comparisonwithbhs}, which is rather a straightforward generalization of \cite[Definition 4.2]{MR3797197}. We follow the notations in the former section, but write $G=\Spec A$. 
	
	The $k\left[\sqrt{t}^{\pm 1}\right]$-algebra $A\left[\sqrt{t}^{\pm 1}\right]$ is equipped with the structure of a Hopf algebra by the base change from $A$. One can put the structure of a Hopf algebra over $k\left[t^{\pm 1}\right]$ on $A_t$ by the Galois descent (cf.~Proposition \ref{prop:fiber_descent}, \cite[Theorem A.3]{hayashikgb}). In fact, one can define the structure homomorphisms by restriction from $A\left[\sqrt{t}^{\pm 1}\right]$ to $A_t$. We wish to restrict the structure homomorphisms of Hopf algebras on $A\left[\sqrt{t}^{\pm 1}\right]$ and $A_t$ to $\bfA$. This follows as a formal consequence of Corollary \ref{cor:monoidal} under Condition \ref{cond:flat}. Let us note below how we can construct the structure homomorphisms.
	
	\begin{lem}\label{lem:counit}
		The counit $\epsilon$ is zero on $A^{-\theta}$.
	\end{lem}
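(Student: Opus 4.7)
The plan is to use the fact that the involution $\theta$ must be compatible with the Hopf algebra structure of $A$ (since we want to propagate these structures through the contraction construction). In particular, $\theta$ is a morphism of Hopf algebras, hence commutes with the counit: $\epsilon\circ\theta=\epsilon$.

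Given this, let $a\in A^{-\theta}$, so $\theta(a)=-a$. Applying $\epsilon$ to both sides and using $\epsilon\circ\theta=\epsilon$ together with $k$-linearity of $\epsilon$, I obtain
\[\epsilon(a)=\epsilon(\theta(a))=\epsilon(-a)=-\epsilon(a),\]
hence $2\epsilon(a)=0$. Since $k$ is a $\bZ\left[1/2\right]$-algebra by the standing assumption at the beginning of the section, $2$ is a unit in $k$, and we conclude $\epsilon(a)=0$.

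The only nontrivial point is to justify $\epsilon\circ\theta=\epsilon$, i.e., that the ``involution'' of the Hopf algebra $A$ is taken to respect the counit (and in fact the full Hopf structure). I would expect this is implicit in the phrase ``Hopf $k$-algebra with an involution'' in this section, since otherwise neither the descent of the Hopf structure from $A\left[\sqrt{t}^{\pm 1}\right]$ to $A_t$ via the Galois involution (mentioned just before the lemma), nor the transfer to $\bfA$ discussed in the surrounding text, would make sense. So the argument is essentially a one-line computation once compatibility of $\theta$ with $\epsilon$ is noted; there is no real obstacle.
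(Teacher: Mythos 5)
Your proof is correct and is essentially the paper's own argument: the paper likewise writes $\epsilon(a)=\tfrac{1}{2}\epsilon(a-\theta(a))=\tfrac{1}{2}(\epsilon(a)-\epsilon(a))=0$, which uses exactly the same two ingredients you identify, namely $\epsilon\circ\theta=\epsilon$ (implicit in ``Hopf algebra with an involution'') and invertibility of $2$. Your explicit flagging of the compatibility $\epsilon\circ\theta=\epsilon$ is a reasonable reading of the paper's standing conventions.
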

	
	\begin{proof}
		This is straightforward: For $a\in A^{-\theta}$, we have
		\[\epsilon(a)=\frac{1}{2}\epsilon(a-\theta(a))=\frac{1}{2}(\epsilon(a)-\epsilon(a))=0.\]
	\end{proof}
	
	\begin{cons}[Counit]\label{cons:counit}
		The map $i\otimes_k k\left[\sqrt{t}^{\pm 1}\right]$ restricts to a $k\left[t\right]$-algebra homomorphism
		$\bfepsilon:\bfA\to k\left[t\right]\subset k\left[\sqrt{t}^{\pm 1}\right]$
		by Lemma \ref{lem:counit}. Explicitly, $\bfepsilon$ is computed by
		\[\bfepsilon|_{A^{\theta}\left[t\right]}=\epsilon|_{A^{\theta}}\otimes_k k\left[t\right]\]
		\[\bfepsilon|_{\left(\oplus_{n\geq 0}A^{-\theta}\frac{t^n}{\sqrt{t}}\right)
			\oplus \oplus_{n\geq 2} \frac{1}{\sqrt{t}^n} (A^{-\theta})^n}=0.\]
	\end{cons}
	
	\begin{cons}[Antipode]
		Since $i$ commutes with $\theta$, $i$ respects $A^{\theta}$ and $A^{-\theta}$. Hence $i\otimes_k k\left[\sqrt{t}^{\pm 1}\right]$ naturally restricts to a $k\left[t\right]$-algebra automorphism $\bfi$ of $\bfA$ (recall the definition of $\bfA$).
	\end{cons}
	
	In the rest of this section, we assume Condition \ref{cond:flat}.
	
	\begin{cons}[Comultiplication]
		Put $(B,\eta)=(A\otimes_k A,\theta\otimes_k \theta)$. Then we have a commutative diagram
		\[\begin{tikzcd}
			\bfA\ar[rr, "\bfDelta"]\ar[d, hook]
			&&\bfB\ar[d, hook]&\bfA\otimes_{k\left[t\right]} \bfA\ar[l, "I_{A,A}"', "\sim"]
			\ar[d, hook, "i_{A,A}"]\\
			A\left[\sqrt{t}^{\pm 1}\right]
			\ar[rr, "{\Delta\otimes_{k}k\left[\sqrt{t}^{\pm 1}\right]}"]
			&&B\left[\sqrt{t}^{\pm 1}\right]
			&A\left[\sqrt{t}^{\pm 1}\right]\otimes_{k\left[\sqrt{t}^{\pm 1}\right]}
			A\left[\sqrt{t}^{\pm 1}\right]\ar[l, "\sim"']
		\end{tikzcd}\]
		by Proposition \ref{prop:monoidal}. We denote the composite horizontal $k\left[t\right]$-algebra homomorphism $\bfA\to \bfA\otimes_{k\left[t\right]} \bfA$ by the same symbol $\bfDelta$. For $a\in\bfA$, we will regard $\bfDelta(a)$ as an element of $A\left[\sqrt{t}^{\pm 1}\right]\otimes_{k\left[\sqrt{t}^{\pm 1}\right]}
		A\left[\sqrt{t}^{\pm 1}\right]$ through the counterclockwise sequence of arrows in computations rather than $\bfA\otimes_{k\left[t\right]} \bfA$.
	\end{cons}
	
	We now record our result as a formal statement:
	
	\begin{thm}\label{thm:Hopf}
		The $k\left[t\right]$-algebra $\bfA$ is a Hopf algebra for the homomorphisms $(\bfDelta,\bfepsilon,\bfi)$.
	\end{thm}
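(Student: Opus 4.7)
The plan is to deduce each of the three Hopf axioms for $\bfA$ from the corresponding axiom for $A\left[\sqrt{t}^{\pm 1}\right]$, which is a Hopf $k\left[\sqrt{t}^{\pm 1}\right]$-algebra via base change from $A$. By construction, $\bfepsilon$, $\bfi$, and $\bfDelta$ were defined as restrictions (through the inclusion $\bfA\hookrightarrow A\left[\sqrt{t}^{\pm 1}\right]$ and the isomorphism $I_{A,A}$) of $\epsilon\otimes k\left[\sqrt{t}^{\pm 1}\right]$, $i\otimes k\left[\sqrt{t}^{\pm 1}\right]$, and $\Delta\otimes k\left[\sqrt{t}^{\pm 1}\right]$, so all that remains is to transfer the axioms along these embeddings. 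The crucial ingredient is that under Condition \ref{cond:flat}, Proposition \ref{prop:monoidal} (5) and (6) ensure that $i_{A,A}\colon\bfA\otimes_{k\left[t\right]}\bfA\hookrightarrow A\left[\sqrt{t}^{\pm 1}\right]\otimes_{k\left[\sqrt{t}^{\pm 1}\right]} A\left[\sqrt{t}^{\pm 1}\right]$ and $i_{A,A,A}\colon\bfA\otimes_{k\left[t\right]}\bfA\otimes_{k\left[t\right]}\bfA\hookrightarrow A\left[\sqrt{t}^{\pm 1}\right]^{\otimes 3}$ are injective.

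For coassociativity, I would verify that $(\bfDelta\otimes_{k\left[t\right]}\id_{\bfA})\circ\bfDelta=(\id_{\bfA}\otimes_{k\left[t\right]}\bfDelta)\circ\bfDelta$ as maps $\bfA\to\bfA\otimes_{k\left[t\right]}\bfA\otimes_{k\left[t\right]}\bfA$ by post-composing with the injection $i_{A,A,A}$. Using the commutative diagram in the construction of $\bfDelta$ together with Proposition \ref{prop:monoidal} (3), both sides are identified with the restriction of $(\Delta\otimes\id)\circ\Delta = (\id\otimes\Delta)\circ\Delta$, extended along $-\otimes_k k\left[\sqrt{t}^{\pm 1}\right]$, to $\bfA$. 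Thus coassociativity for $A$ (preserved by base change) pulls back to the desired equality in $\bfA^{\otimes 3}$.

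For the counit axiom, the two maps $(\bfepsilon\otimes_{k\left[t\right]}\id_{\bfA})\circ\bfDelta$ and $(\id_{\bfA}\otimes_{k\left[t\right]}\bfepsilon)\circ\bfDelta$ and the identity $\id_{\bfA}$ are all $k\left[t\right]$-algebra endomorphisms of $\bfA$, and Proposition \ref{prop:monoidal} (7) identifies their images under $\bfA\hookrightarrow A\left[\sqrt{t}^{\pm 1}\right]$ with the corresponding maps obtained from $(\epsilon,\Delta)$ on $A$ base-changed to $k\left[\sqrt{t}^{\pm 1}\right]$; the counit axiom for $A$ then gives the equality. For the antipode axiom, apply the same strategy: both compositions $m_{\bfA}\circ(\bfi\otimes\id_{\bfA})\circ\bfDelta$ and $m_{\bfA}\circ(\id_{\bfA}\otimes\bfi)\circ\bfDelta$ are $k\left[t\right]$-linear endomorphisms of $\bfA$, and after composing with $\bfA\hookrightarrow A\left[\sqrt{t}^{\pm 1}\right]$ they agree with the analogous compositions for $A\left[\sqrt{t}^{\pm 1}\right]$, which equal the unit times $\epsilon\otimes k\left[\sqrt{t}^{\pm 1}\right]$; restricted to $\bfA$ this is exactly the composition of $\bfepsilon$ with the unit $k\left[t\right]\to\bfA$.

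There is no serious obstacle, because the preservation of tensor products in Corollary \ref{cor:monoidal} is precisely the tool that makes $(A,\theta)\rightsquigarrow\bfA$ a symmetric monoidal functor on the flat subcategory, and Hopf algebra objects are preserved by symmetric monoidal functors. The only mildly delicate point is notational: one must consistently track which map lives in $\bfA^{\otimes n}$ and which in $A\left[\sqrt{t}^{\pm 1}\right]^{\otimes n}$, and invoke the injectivity from Proposition \ref{prop:monoidal} (5)--(6) to descend equalities. Indeed a cleaner formulation of the proof is to observe that the entire argument is packaged in the statement that the lax symmetric monoidal functor of Corollary \ref{cor:monoidal} becomes strong symmetric monoidal on objects satisfying Condition \ref{cond:flat}, hence sends the commutative Hopf algebra $(A,\Delta,\epsilon,i)$ to the commutative Hopf algebra $(\bfA,\bfDelta,\bfepsilon,\bfi)$.
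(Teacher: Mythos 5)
Your proposal is correct and follows essentially the same route as the paper: the paper explicitly states that the Hopf structure on $\bfA$ "follows as a formal consequence of Corollary \ref{cor:monoidal} under Condition \ref{cond:flat}" and records Theorem \ref{thm:Hopf} as a formal statement, which is exactly your observation that the strong symmetric monoidal functor on the flat subcategory transports the Hopf (cogroup) structure from $A$ to $\bfA$. Your more detailed verification of the axioms via the injections $i_{A,A}$ and $i_{A,A,A}$ from Proposition \ref{prop:monoidal} (5)--(6) is just an unpacking of that same formal argument.
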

	
	Set $K=\Spec A/I$. According to \cite[Proof of Lemma 3.1.1]{MR4627704}, $K\subset G$ can be identified with the fixed point subgroup scheme by $\Spec \theta$. 
	
	Let $\fg$ be the Lie algebra of $G$ (see \cite[Chapter II, \S 4]{MR563524} for the general formalism). We denote the differential of $\theta$ by the same symbol. Set
	\[\begin{array}{cc}
		\fk=\{x\in\fg:~\theta(x)=x\},&
		\fp=\{x\in\fg:~\theta(x)=-x\}.
	\end{array}\]
	It follows by definition that $\fk$ is naturally identified with the Lie algebra of $K$.
	
	\begin{cor}\label{cor:cartan_motion_group}
		If $A$ is smooth over $k$ then $\Spec \bfA_0$ is isomorphic to $K\ltimes\underline{\fp}$ as an affine group scheme over $k$.
	\end{cor}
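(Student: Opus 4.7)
The plan is to combine Theorem \ref{thm:smooth}(1) with a $K$-equivariant trivialization of the conormal bundle $I/I^2$, then check that the Hopf structure on $\bfA_0$ from Theorem \ref{thm:Hopf} reproduces the semidirect product $K \ltimes \underline{\fp}$.

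First I would use Theorem \ref{thm:smooth}(1) to write $\bfA_0 \cong \Sym_{A/I}(I/I^2)$. Since both $A$ and $A/I$ are smooth over $k$ (the latter as recalled in the proof of Theorem \ref{thm:smooth}), the closed immersion $K = \Spec A/I \hookrightarrow G$ is regular, so $I/I^2$ is a finitely generated projective $A/I$-module. The conormal exact sequence together with the translation trivializations $\Omega_{A/k} \cong A \otimes_k \fg^\vee$ and $\Omega_{(A/I)/k} \cong (A/I) \otimes_k \fk^\vee$ yields $I/I^2 \cong (A/I) \otimes_k \fp^\vee$ via $\fg = \fk \oplus \fp$. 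Thus as $k$-algebras $\bfA_0 \cong (A/I) \otimes_k \Sym_k \fp^\vee$, which is the coordinate ring of $K \times_k \underline{\fp}$.

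Next I would identify the Hopf structure on $\bfA_0$ with that of a semidirect product. Since $K \subset G$ is a closed subgroup, $I$ is a Hopf ideal and $\Delta(I^n) \subset \sum_{i+j=n} I^i \otimes I^j$, so the associated graded $\gr^\bullet_I A$ inherits a Hopf structure. I would verify that the algebra isomorphism $\bfA_0 \cong \gr^\bullet_I A$ from Proposition \ref{prop:fiber_at_t=0} is a Hopf isomorphism by a direct computation on generators, using $\Delta \circ \theta = (\theta \otimes \theta) \circ \Delta$: for $a \in A^\theta$ the image $\Delta(a)$ lies in $(A^\theta \otimes A^\theta) \oplus (A^{-\theta} \otimes A^{-\theta})$, and the mixed component acquires a factor of $t$ in $\bfA$ (via $a_{(1)}^{-\theta} \otimes a_{(2)}^{-\theta} = t \cdot (a_{(1)}^{-\theta}/\sqrt{t}) \otimes (a_{(2)}^{-\theta}/\sqrt{t})$) hence vanishes modulo $t$, recovering the $A/I$-comultiplication on the degree-zero part; the parallel computation for $a \in A^{-\theta}$ delivers the degree-one terms of $\Delta_{\gr^\bullet_I A}$.

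Finally I would exhibit the split short exact sequence $1 \to \underline{\fp} \to \bfX_0 \to K \to 1$ of affine group schemes over $k$: the surjection $\bfA_0 \twoheadrightarrow A/I$ killing $\Sym^{\geq 1}$ and the inclusion of $A/I$ as the degree-zero summand are both Hopf maps by the formulas above, so their mutual composition being the identity yields the splitting and identifies $\bfX_0$ with $K \ltimes \underline{\fp}$ for some $K$-action on $\underline{\fp}$. The residual $K$-action on the normal subgroup $\underline{\fp}$ is induced by conjugation in $\bfX_0$, which at the level of the conormal bundle $I/I^2 \cong (A/I) \otimes_k \fp^\vee$ is the adjoint representation by the standard compatibility of the translation trivialization with conjugation. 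The main obstacle is this last identification of the action: it requires carefully tracking how the translation trivialization of $\Omega_{G/k}$ transforms under $K$-conjugation, so that the induced action of $K$ on $\fp^\vee$ coincides with the coadjoint of $\Ad$. Once this is in place, the identification with the Cartan motion group $K \ltimes \underline{\fp}$ follows.
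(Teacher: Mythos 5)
Your proposal is correct and follows exactly the route the paper intends: the paper states this corollary without any proof, as a formal consequence of Theorem \ref{thm:smooth} (1) (giving $\bfA_0\cong\Sym_{A/I}I/I^2$, i.e.\ the normal bundle of $K$ in $G$) and of the Hopf structure of Theorem \ref{thm:Hopf}. Your writeup supplies the details the paper leaves implicit --- the translation trivialization $I/I^2\cong(A/I)\otimes_k\fp^\vee$, the eigenspace computation showing the mixed term of $\bfDelta$ dies modulo $t$, and the identification of the conjugation action on $\underline{\fp}$ with $\Ad$ --- and each of these steps is sound.
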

	
	For a digression, let us compute the differential structures:
	
	\begin{cons}[{\cite[2.1.3]{MR4130851}}]
		Write $\left[-,-\right]_{\fg}$ for the Lie bracket of $\fg$.
		Define a Lie algebra over $k\left[t\right]$ as follows:	
		\[\bfg= \fg\otimes_k k\left[t\right],\]
		\[\left[x,y\right]_{\bfg}=\begin{cases}
			\left[x,y\right]_{\fg}&(x\in\fk,~y\in\fg)\\
			t\left[x,y\right]_{\fg}&(x,y\in\fp),
		\end{cases}\]
		where $\left[x,y\right]_{\bfg}$ is the Lie bracket of $x$ and $y$ in $\bfg$. We also set $\bfg_t\coloneqq \bfg\otimes_{k\left[t\right]} k\left[t^{\pm 1}\right]$.
	\end{cons}
	
	\begin{prop}\label{prop:liealg}
		Suppose that $A$ is finitely presented over $k$. Then the Lie algebra of $\bfG$ is isomorphic to $\bfg$.
	\end{prop}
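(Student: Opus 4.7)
The plan is to compute $\Lie(\bfG)=\Der_{k\left[t\right]}(\bfA,k\left[t\right])$ (with $k\left[t\right]$ viewed as a $\bfA$-module via $\bfepsilon$) directly in a first step, and then transport the Lie bracket via base change to $k\left[\sqrt{t}^{\pm 1}\right]$.

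In the first step, by Proposition \ref{prop:generatorofbfA}, a $k\left[t\right]$-derivation $d:\bfA\to k\left[t\right]$ is determined by the data $d_0\coloneqq d|_{A^\theta}$ together with the map $\phi:A^{-\theta}\to k\left[t\right]$ defined by $\phi(a)=d\!\left(\frac{a}{\sqrt{t}}\right)$. Since $\bfepsilon$ vanishes on $\frac{A^{-\theta}}{\sqrt{t}}$ (Construction \ref{cons:counit}), Leibniz gives $d\!\left(\frac{a}{\sqrt{t}}\cdot\frac{b}{\sqrt{t}}\right)=0$ for $a,b\in A^{-\theta}$; combined with the relation $t\cdot\frac{a}{\sqrt{t}}\cdot\frac{b}{\sqrt{t}}=ab$ in $\bfA$ and $k\left[t\right]$-linearity, this forces $d_0$ to vanish on $(A^{-\theta})^2$. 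Similarly, Leibniz applied to $a\cdot\frac{b}{\sqrt{t}}=\frac{ab}{\sqrt{t}}$ with $a\in A^\theta$ and $b\in A^{-\theta}$ forces $\phi$ to be $A^\theta$-linear, where $A^\theta$ acts on $k\left[t\right]$ through $\epsilon$. Hence $d_0$ descends to an element of $\Der_k(A/I,k\left[t\right])\cong\fk\otimes_k k\left[t\right]$ and $\phi$ to an element of $\Hom_{A^\theta}(A^{-\theta},k\left[t\right])\cong\fp\otimes_k k\left[t\right]$ (under the identifications $\fk=\Lie(K)$ and $\fp=\Hom_{A^\theta}(A^{-\theta},k)$). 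Finite presentation of $A$ over $k$ is used here to commute these $\Der$ and $\Hom$ functors with the base change $k\to k\left[t\right]$. Conversely, any such compatible pair extends to a derivation of $\bfA$: by Leibniz and $\bfepsilon|_{\frac{A^{-\theta}}{\sqrt{t}}}=0$, $d$ must automatically vanish on the higher summand $\bigoplus_{n\geq 2}\frac{(A^{-\theta})^n}{\sqrt{t}^n}$ of the decomposition \eqref{eq:bfA}. This yields a $k\left[t\right]$-module isomorphism $\Lie(\bfG)\cong\fk\otimes_k k\left[t\right]\oplus\fp\otimes_k k\left[t\right]=\fg\otimes_k k\left[t\right]=\bfg$.

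For the bracket, Lemma \ref{lem:relation} (2) provides a Hopf-algebra isomorphism $\bfA\otimes_{k\left[t\right]}k\left[\sqrt{t}^{\pm 1}\right]\cong A\otimes_k k\left[\sqrt{t}^{\pm 1}\right]$, which induces an isomorphism $\Lie(\bfG)\otimes_{k\left[t\right]}k\left[\sqrt{t}^{\pm 1}\right]\cong\fg\otimes_k k\left[\sqrt{t}^{\pm 1}\right]$ of Lie algebras over $k\left[\sqrt{t}^{\pm 1}\right]$, the finite presentation hypothesis again being used so that formation of Lie algebras commutes with this flat base change. Tracing the identifications, a pair $(d_0,\phi)$ corresponds to $d_0+\sqrt{t}\cdot\phi$ in $\fg\otimes_k k\left[\sqrt{t}^{\pm 1}\right]$: the factor $\sqrt{t}$ arises because an element $a\in A^{-\theta}$ corresponds to $\frac{a}{\sqrt{t}}\otimes\sqrt{t}$ under the Hopf-algebra iso. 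Expanding $[d_0^1+\sqrt{t}\phi^1,d_0^2+\sqrt{t}\phi^2]_{\fg}$ $k\left[\sqrt{t}^{\pm 1}\right]$-linearly and separating components using $[\fk,\fk]_{\fg},[\fp,\fp]_{\fg}\subset\fk$, $[\fk,\fp]_{\fg}\subset\fp$, and $\sqrt{t}^2=t$ yields
\[[(d_0^1,\phi^1),(d_0^2,\phi^2)]=\left([d_0^1,d_0^2]_{\fg}+t[\phi^1,\phi^2]_{\fg},\,[d_0^1,\phi^2]_{\fg}+[\phi^1,d_0^2]_{\fg}\right),\]
which is exactly the bracket on $\bfg$. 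Hence the isomorphism of the first step is one of Lie algebras over $k\left[t\right]$.

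The main technical obstacle lies in the first step: verifying that the pair $(d_0,\phi)$ subject to the two constraints above really gives a bijective parametrization of $\Lie(\bfG)$, by unwinding Leibniz across every relation present in $\bfA$ (in particular those inside the higher summand $\bigoplus_{n\geq 2}\frac{(A^{-\theta})^n}{\sqrt{t}^n}$), and in confirming that the various base-change identifications are legitimate under only the finite presentation hypothesis. The second step is then essentially a bookkeeping computation with $\sqrt{t}$-parities.
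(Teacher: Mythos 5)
Your proposal is correct, and its first half --- the identification of $\Der_{k\left[t\right]}(\bfA,k\left[t\right])$ with $\fg\otimes_k k\left[t\right]$ as a $k\left[t\right]$-module --- is essentially the paper's argument in a different packaging: the paper assembles your pair $(d_0,\phi)$ into a single derivation $x\in\Der_{k\left[t\right]}(A\left[t\right],k\left[t\right])$ (namely $x(a)=\xi(a)$ on $A^{\theta}$ and $x(a)=\xi(a/\sqrt{t})$ on $A^{-\theta}$) and then applies \cite[Corollaire (16.4.22)]{MR0238860} once, to $A$ itself, to commute $\Der$ with the base change $k\to k\left[t\right]$; this is slightly more economical than commuting $\Der_k(A/I,-)$ and $\Hom_{A^{\theta}}(A^{-\theta},-)$ with base change separately, and your two summands can in any case be recovered afterwards from the splitting $\Der_k(A,k)=\fk\oplus\fp$. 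Where you genuinely diverge is the bracket. The paper computes $\left[\xi_1,\xi_2\right]$ directly on $\bfA$ via the convolution formula, expanding $\Delta(a\pm\theta(a))$ in Sweedler notation and treating the cases $x\in\fk$ and $x,y\in\fp$ separately. You instead push everything through the faithfully flat base change to $k\left[\sqrt{t}^{\pm 1}\right]$, where $\bfA$ becomes $A\left[\sqrt{t}^{\pm 1}\right]$ and the image of $\Lie(\bfG)$ is $\fk\otimes_k k\left[\sqrt{t}^{\pm 1}\right]\oplus\sqrt{t}\,(\fp\otimes_k k\left[\sqrt{t}^{\pm 1}\right])$ with the untwisted bracket; the contraction bracket then falls out of $\sqrt{t}^2=t$ together with $\left[\fk,\fk\right],\left[\fp,\fp\right]\subset\fk$ and $\left[\fk,\fp\right]\subset\fp$. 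This is cleaner and makes the In\"on\"u--Wigner mechanism transparent, but it does require you to note explicitly that extension of scalars of derivations along $k\left[t\right]\to k\left[\sqrt{t}^{\pm 1}\right]$ is injective (faithful flatness) and compatible with the convolution bracket (base change preserves the comultiplication) --- precisely the bookkeeping that the paper's direct computation sidesteps. The remaining verifications you flag (Leibniz on the summands $\frac{1}{\sqrt{t}^n}(A^{-\theta})^n$ for $n\geq 2$, on which any derivation must vanish since $\bfepsilon$ kills $\frac{1}{\sqrt{t}}A^{-\theta}$) are exactly the ones the paper also leaves to the reader, so I see no gap.
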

	
	Let $\Der_k(A,k)$ denote the $k$-derivations of $A$ on $k$, where $k$ is regarded as an $A$-module for $\epsilon$. We use similar notations for other commutative Hopf algebras. According to \cite[Chapter I, \S 4, 2.2 Proposition and Chapter II, \S 4, 3.6 Corollary]{MR563524}, one can identify $\Der_k(A,k)$ with the Lie algebra of $G$.
	
	\begin{proof}
		We identify $\bfg$ with the Lie subalgebra
		\[\fk\otimes_k k\left[t\right]\oplus \fp\otimes_k k\left[t\right]\sqrt{t}
		\subset\fg\otimes_k  k\left[\sqrt{t}^{\pm 1}\right] \]
		over $k\left[t\right]$.
		
		Observe that $\fg\otimes_k k\left[\sqrt{t}^{\pm 1}\right]\cong \Der_{k\left[\sqrt{t}^{\pm 1}\right]}\left(A\left[\sqrt{t}^{\pm 1}\right],k\left[\sqrt{t}^{\pm 1}\right]\right)$ since $A$ is finitely presented (\cite[Corollaire (16.4.22)]{MR0238860}).	
		Define a Lie algebra homomorphism
		\[\Der_{k\left[t\right]}(\bfA,k\left[t\right])\to \Der_{k\left[\sqrt{t}^{\pm 1}\right]}\left(A\left[\sqrt{t}^{\pm 1}\right],k\left[\sqrt{t}^{\pm 1}\right]\right)
		\cong \fg\otimes_k k\left[\sqrt{t}^{\pm 1}\right]
		\]
		by the base change $-\otimes_{k\left[t\right]} k\left[\sqrt{t}^{\pm 1}\right]$ and the identification of Lemma \ref{lem:relation} (3). The proof will be completed by showing that this is an isomorphism onto $\bfg$.
		
		For this, let $\xi\in \Der_{k\left[t\right]}(\bfA,k\left[t\right])$. Then define a $k\left[t\right]$-module homomorphism $x:A\left[t\right]\to k\left[t\right]$ by
		\[x(a)=\begin{cases}
			\xi(a)&(a\in A^{\theta})\\
			\xi\left(\frac{1}{\sqrt{t}} a\right)&(a\in A^{-\theta}).
		\end{cases}\]
		It is easy to show that $x$ belongs to $\Der_{k\left[t\right]}(A\left[t\right],k\left[t\right])$ (use Lemma \ref{lem:counit}). Conversely, suppose that we are given a $k\left[t\right]$-derivation $x$ of $A\left[t\right]$. Then define a $k\left[t\right]$-module homomorphism $\xi:\bfA\to k\left[t\right]$ by
		\[\begin{array}{cc}
			\xi(a)=x(a)&(a\in A^{\theta})\\
			\xi\left(\frac{1}{\sqrt{t}} a\right)=x(a)&(a\in A^{-\theta})\\
			\xi|_{\oplus_{n\geq 2} \frac{1}{\sqrt{t}^n} (A^{-\theta})^n}=0.
		\end{array}\]
		One can easily prove that $\xi$ belongs to $\Der_{k\left[t\right]}(\bfA,k\left[t\right])$ (use Lemma \ref{lem:counit}). These constructions give a $k\left[t\right]$-module isomorphism
		\[\Der_{k\left[t\right]}(\bfA,k\left[t\right])
		\cong \Der_{k\left[t\right]}(A\left[t\right],k\left[t\right]).\]
		Since $A$ is finitely presented over $k$, the canonical map
		\[\fg\otimes_k k\left[t\right] \to
		\Der_{k\left[t\right]}(A\left[t\right],k\left[t\right]) \]
		is an isomorphism (use \cite[Corollaire (16.4.22)]{MR0238860}). We also note that $x(a)=0$ for $(x,a)\in (\fk\times A^{-\theta})\cup (\fp\times A^\theta)\subset\fg\times A$.
		It is now straightforward by unwinding the definitions that the total map
		\[\begin{split}
			\fg\otimes_k k\left[t\right]
			&\cong
			\Der_{k\left[t\right]}(A\left[t\right],k\left[t\right])\\
			&\cong \Der_{k\left[t\right]}(\bfA,k\left[t\right])\\
			&\to \Der_{k\left[\sqrt{t}^{\pm 1}\right]}\left(A\left[\sqrt{t}^{\pm 1}\right],k\left[\sqrt{t}^{\pm 1}\right]\right)\\
			&\cong \fg\otimes_k k\left[\sqrt{t}^{\pm 1}\right]
		\end{split}
		\]
		is given by $\fk\ni x\mapsto x$ and $\fp\ni x\mapsto \sqrt{t}x$. The assertion is now obvious.
		
	\end{proof}
	
	\begin{cor}\label{cor:diffaction}
		Let $Y=\Spec B$ be an affine $k$-scheme, equipped with an action of $G$, and $\eta$ be an involution of $B$. Assume:
		\begin{enumerate}
			\renewcommand{\labelenumi}{(\roman{enumi})}
			\item The action of $G$ on $Y$ respects the involutions of $G$ and $Y$;
			\item $A$ satisfies Condition \ref{cond:flat}.
		\end{enumerate}
		Then $\bfY$ is naturally equipped with an action of $\bfG$. Moreover, its differential action is given by
		\[\bfx(b)=\begin{cases}
			x(b)&(x\in\fk,~b\in B^\eta)\\
			\sqrt{t} x(b)&(x\in\fp,~b\in B^\eta)\\
		\end{cases}\]
		\[\bfx\left(\frac{1}{\sqrt{t}}b\right)=\begin{cases}
			\frac{1}{\sqrt{t}}x(b)&(x\in\fk,~b\in B^{-\eta})\\
			x(b)&(x\in\fp,~b\in B^{-\eta})
		\end{cases}\]
		under the identification of Proposition \ref{prop:liealg}, where for $x\in\fg\subset\bfg$, $\bfx$ is the image of $x$ in the Lie algebra of $\bfG$.
	\end{cor}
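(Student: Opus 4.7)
The plan is to first build the $\bfG$-action on $\bfY$ by applying the contraction functor to the coaction on $Y$, then read off the differential action via the identification in Proposition \ref{prop:liealg}.

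\textbf{Step 1 (the action).} The given $G$-action is encoded by an $A$-comodule algebra structure $\rho:B\to A\otimes_k B$, and hypothesis (i) says that $\rho\circ\eta=(\theta\otimes_k\eta)\circ\rho$, i.e.~$\rho$ is a morphism $(B,\eta)\to (A\otimes_k B,\theta\otimes_k\eta)$ in the category of commutative $k$-algebras with involutions. Apply the functor $(-,-)\rightsquigarrow\bf(-)$, and then invoke Proposition \ref{prop:monoidal}~(5) (which applies thanks to Condition \ref{cond:flat} on $A$) to obtain a $k\left[t\right]$-algebra homomorphism
\[\bfrho:\bfB\to \bf(A\otimes_k B)\xrightarrow{I_{A,B}^{-1}}\bfA\otimes_{k\left[t\right]}\bfB.\]
Coassociativity and counitality are inherited by functoriality together with Proposition \ref{prop:monoidal}~(3) and (7), so $\bfrho$ defines the desired action of $\bfG=\Spec\bfA$ on $\bfY=\Spec\bfB$.

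\textbf{Step 2 (explicit form of $\bfrho$).} The compatibility $\rho\circ\eta=(\theta\otimes_k\eta)\circ\rho$ says that $\rho$ preserves the $\pm 1$ eigenspace decomposition under $\theta\otimes_k\eta$. Hence for $b\in B^\eta$ we may write
\[\rho(b)=\sum_i p_i\otimes q_i+\sum_j r_j\otimes s_j,\qquad p_i\in A^{\theta},\ q_i\in B^\eta,\ r_j\in A^{-\theta},\ s_j\in B^{-\eta},\]
and for $b\in B^{-\eta}$,
\[\rho(b)=\sum_i u_i\otimes v_i+\sum_j w_j\otimes z_j,\qquad u_i\in A^{\theta},\ v_i\in B^{-\eta},\ w_j\in A^{-\theta},\ z_j\in B^\eta.\]
Viewed inside $A\left[\sqrt{t}^{\pm 1}\right]\otimes_{k\left[\sqrt{t}^{\pm 1}\right]} B\left[\sqrt{t}^{\pm 1}\right]$ and rewritten as elements of $\bfA\otimes_{k\left[t\right]}\bfB$ (using $p\otimes q=t\,(p/\sqrt{t})\otimes (q/\sqrt{t})$ for the $A^{-\theta}\otimes B^{-\eta}$ part, and distributing $1/\sqrt{t}$ appropriately in the second case), these become
\[\bfrho(b)=\sum_i p_i\otimes q_i+t\sum_j (r_j/\sqrt{t})\otimes (s_j/\sqrt{t}),\]
\[\bfrho(b/\sqrt{t})=\sum_i u_i\otimes (v_i/\sqrt{t})+\sum_j (w_j/\sqrt{t})\otimes z_j.\]

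\textbf{Step 3 (differential).} The differential action of $\xi\in\Der_{k\left[t\right]}(\bfA,k\left[t\right])$ on $\beta\in\bfB$ is $(\xi\otimes_{k\left[t\right]}\id)\circ\bfrho(\beta)$. Under Proposition \ref{prop:liealg}, $\xi$ corresponds to $x\in\bfg$ via $\xi(a)=x(a)$ for $a\in A^{\theta}$ and $\xi(a/\sqrt{t})=x(a)$ for $a\in A^{-\theta}$. Recall that $x\in\fk$ (resp.~$x\in\fp$) vanishes on $A^{-\theta}$ (resp.~$A^{\theta}$) since $\fk$ and $\fp$ are the $\pm 1$ eigenspaces for the action $x\mapsto x\circ\theta$. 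Substituting into the two displayed expressions for $\bfrho$ and handling the four combinations of $x\in\fk,\fp$ with $\beta\in B^\eta, B^{-\eta}/\sqrt{t}$ yields the four stated formulas; the only nontrivial contribution in each case survives because the complementary derivations vanish, and in the case $(x\in\fp,\,b\in B^\eta)$ the leftover factor $t\cdot(s_j/\sqrt{t})=\sqrt{t}\,s_j$ produces the advertised $\sqrt{t}$. The most delicate point is precisely this bookkeeping: identifying $\bfrho(b)$ as an element of $\bfA\otimes_{k\left[t\right]}\bfB$ rather than of the ambient $A\left[\sqrt{t}^{\pm 1}\right]\otimes B\left[\sqrt{t}^{\pm 1}\right]$ is what distributes powers of $\sqrt{t}$ correctly between the two tensor factors, and this is the step where Condition \ref{cond:flat} is essential through the injectivity in Proposition \ref{prop:monoidal}~(5).
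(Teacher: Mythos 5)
Your proposal is correct and follows essentially the same route as the paper: the action is obtained from Proposition \ref{prop:monoidal} exactly as in Corollary \ref{cor:monoidal}, and the differential is computed by decomposing $\rho(b)$ into its $(\pm,\pm)$-eigencomponents, tracking the powers of $\sqrt{t}$ inside $\bfA\otimes_{k\left[t\right]}\bfB$, and using that $x\in\fk$ (resp.~$x\in\fp$) kills $A^{-\theta}$ (resp.~$A^{\theta}$). The paper carries out the same bookkeeping in symmetrized Sweedler notation, $\rho(b)=\sum_{(b)}(b_{(1)+}\otimes a_{(2)+}+b_{(1)-}\otimes a_{(2)-})$ for $b\in B^{\eta}$, which is just an explicit form of your abstract decomposition.
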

	
	We remark that $\bfx$ is uniquely determined by the formulas above through the Leibniz rule.
	
	\begin{proof}
		The first assertion follows from Proposition \ref{prop:monoidal} in a similar way to Corollary \ref{cor:monoidal}. We compute the differential by embedding $\bfB$ into $B\otimes_k k\left[\sqrt{t}^{\pm 1}\right]$. That is, we have a commutative diagram
		\[\begin{tikzcd}
			\bfg\otimes_{k\left[t\right]} \bfB\ar[r]\ar[d]&\bfB\ar[d,hook]\\
			(\fg\otimes_k k\left[\sqrt{t}^{\pm 1}\right])
			\otimes_{k\left[\sqrt{t}^{\pm 1}\right]} B\otimes_k k\left[\sqrt{t}^{\pm 1}\right]
			\ar[r]&B\otimes_k k\left[\sqrt{t}^{\pm 1}\right].
		\end{tikzcd}\]
		Here the horizontal arrows are given by the differential actions. The vertical arrows the induced maps from $k\left[t\right]\subset k\left[\sqrt{t}^{\pm 1}\right]$ (recall Lemma \ref{lem:relation} (3) and the proof of Proposition \ref{prop:liealg}). The differential is now computed as follows:
		\[\begin{array}{cc}
			\bfx(b)=x(b)&(\xi\in \fk,~b\in B^{\eta});\\
			\bfx(b)=(\sqrt{t}x)(b)=\sqrt{t}x(b)&(\xi\in \fp,~b\in B^{\eta});\\
			\bfx\left(\frac{1}{\sqrt{t}}b\right)=x\left(\frac{1}{\sqrt{t}}b\right)
			=\frac{1}{\sqrt{t}}x(b)&(\xi\in \fk,~b\in B^{-\eta});\\
			\bfx\left(\frac{1}{\sqrt{t}}b\right)
			=(\sqrt{t}x)\left(\frac{1}{\sqrt{t}}b\right)
			=x(b)&(\xi\in \fp,~b\in B^{-\eta}).
		\end{array}\]
		This completes the proof.

	\end{proof}
	
	The following assertion is evident by construction:
	
	\begin{prop}\label{prop:flatbc_hopf}
		Let $k\to k'$ be a flat (resp.~arbitrary) homomorphism of commutative $\bZ\left[1/2\right]$-algebras, and $A$ be a commutative Hopf $k$-algebra with an involution $\theta$. Write $B=k'\otimes_k A$. If $(A,\theta)$ satisfies Condition \ref{cond:flat} (resp.~$A$ is a smooth $k$-algebra) then the canonical map
		$k'\otimes_k\bfA\cong \bfB$ of $k'\left[t\right]$-algebras in Proposition \ref{prop:flatbc} (resp.~Theorem \ref{thm:smooth} (1)) is an isomorphism of Hopf algebras over $k'\left[t\right]$. We remark that this statement makes a sense since $B$ satisfies (resp.~$A$ and $B$ satisfy) Condition \ref{cond:flat} by Example \ref{ex:flatbc} (resp.~\ref{ex:flat_smooth}). 
	\end{prop}
	
	For a positive integer $n$, let $\SL_n$ be the special linear group scheme of degree $n$ over $k$.
	
	\begin{prop}\label{prop:comparisonwithbhs_groups}
		Suppose that we are given a representation $\iota:G\to \SL_n$ ($n\geq 1$) which is a closed immersion as a morphism of schemes. Regard $\SL_n$ as a closed subscheme of $\bA^{n^2}_k$. Then the morphism $\tilde{\iota}:\bfG\hookrightarrow\bA^{4n^2}_{k\left[t\right]}$ in Corollary \ref{cor:comparisonwithbhs} factors through $\SL_{2n}\otimes_k k\left[t\right]$. Moreover, the resulting map $\bfG\to \SL_{2n}\otimes_k k\left[t\right]$ is a homomorphism of group schemes. 
	\end{prop}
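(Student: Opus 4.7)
The proof splits into: (i) verifying that the $2n\times 2n$ matrix defining $\tilde\iota$ satisfies $\det=1$ so that $\tilde\iota$ factors through $\SL_{2n}\otimes_k k\left[t\right]$; and (ii) verifying that the factored map is a group-scheme homomorphism, by descending the claim from $k\left[\sqrt{t}^{\pm 1}\right]$.

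For (i), I work in $A\left[\sqrt{t}^{\pm 1}\right]$, into which $\bfA$ embeds. Let $M$ denote the matrix $\tilde\iota(x)$ of Corollary \ref{cor:comparisonwithbhs}(i), and let $P$ be the $2n\times 2n$ block matrix whose four $n\times n$ blocks are $I_n,\ I_n,\ \sqrt{t}^{-1}I_n,\ -\sqrt{t}^{-1}I_n$. Then $P$ is invertible in $\GL_{2n}(k\left[\sqrt{t}^{\pm 1}\right])$, and a direct block multiplication yields
\[M=P\cdot\mathrm{diag}(\iota(x),\iota(\theta x))\cdot P^{-1}.\]
Conjugation preserves determinants, so $\det M=\det\iota(x)\cdot\det\iota(\theta x)=1$ in $A\left[\sqrt{t}^{\pm 1}\right]$; the injection $\bfA\hookrightarrow A\left[\sqrt{t}^{\pm 1}\right]$ transfers this equality to $\bfA$, which is exactly the image under $\tilde\iota^\ast$ of the defining relation of $\SL_{2n}$. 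Hence $\tilde\iota$ factors through $\SL_{2n}\otimes_k k\left[t\right]$.

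For (ii), I base change along $k\left[t\right]\to k\left[\sqrt{t}^{\pm 1}\right]$. Via the identification $\bfA\otimes_{k\left[t\right]}k\left[\sqrt{t}^{\pm 1}\right]\cong A\left[\sqrt{t}^{\pm 1}\right]$ of Lemma \ref{lem:relation}, the base-changed morphism is $g\mapsto P\cdot\mathrm{diag}(\iota(g),\iota(\theta g))\cdot P^{-1}$, which factors as a composition of three group homomorphisms: the doubling $g\mapsto(g,\theta g)$ (a homomorphism since $\theta$ is a group automorphism of $G$), the block embedding $\SL_n\times\SL_n\hookrightarrow\SL_{2n}$, and conjugation by the fixed invertible matrix $P$. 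Hence the base-changed morphism is a homomorphism of group schemes over $k\left[\sqrt{t}^{\pm 1}\right]$.

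The main obstacle is to descend this property back to $k\left[t\right]$: the required comultiplication identity $\bfDelta\circ\tilde\iota^\ast=(\tilde\iota^\ast\otimes\tilde\iota^\ast)\circ\Delta_{\SL_{2n}}$ is an equality of $k\left[t\right]$-algebra maps $\cO_{\SL_{2n}}\otimes_k k\left[t\right]\to\bfA\otimes_{k\left[t\right]}\bfA$, whereas the computation above only verifies it after composing with the canonical map $i_{A,A}\colon\bfA\otimes_{k\left[t\right]}\bfA\to A\left[\sqrt{t}^{\pm 1}\right]\otimes_{k\left[\sqrt{t}^{\pm 1}\right]}A\left[\sqrt{t}^{\pm 1}\right]$. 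However, Condition \ref{cond:flat} is in force throughout Section \ref{sec:hopf}, so Proposition \ref{prop:monoidal}(5) guarantees that $i_{A,A}$ is injective, and the identity descends. The counit identity descends by the same argument, and antipode compatibility is automatic for a bialgebra map between Hopf algebras.
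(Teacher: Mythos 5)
Your proof is correct and follows essentially the same route as the paper: both rest on the conjugation formula $g\mapsto P\,\mathrm{diag}(\iota(g),\iota(\theta g))\,P^{-1}$ over $k\left[\sqrt{t}^{\pm 1}\right]$ and descend the homomorphism property using the injectivity of $i_{A,A}$ granted by Condition \ref{cond:flat}, i.e.\ by comparing comultiplications inside $A\left[\sqrt{t}^{\pm 1}\right]\otimes_{k\left[\sqrt{t}^{\pm 1}\right]}A\left[\sqrt{t}^{\pm 1}\right]$. The only divergence is in how the factorization through $\SL_{2n}\otimes_k k\left[t\right]$ is concluded: you verify the relation $\det=1$ directly in $A\left[\sqrt{t}^{\pm 1}\right]$ and pull it back along the injection $\bfA\hookrightarrow A\left[\sqrt{t}^{\pm 1}\right]$, whereas the paper invokes the scheme-theoretic-closure description of $\bfG$ from Corollary \ref{cor:comparisonwithbhs} (ii) together with the closedness of $\SL_{2n}$ in affine space --- two formulations of the same underlying fact.
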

	
	\begin{proof}
		The map $\tilde{\iota}\otimes_{k\left[t\right]} k\left[\sqrt{t}^{\pm 1}\right]$ is expressed as
		\[g\mapsto\left(\begin{array}{cc}
			\sqrt{t} I_n & -\sqrt{t} I_n\\
			I_n & I_n
		\end{array}\right)\left(\begin{array}{cc}
			\iota(g) & 0 \\
			0 & \iota(\theta(g))
		\end{array}\right)\left(\begin{array}{cc}
			\sqrt{t} I_n & -\sqrt{t} I_n\\
			I_n & I_n
		\end{array}\right)^{-1}\]
		under the identification
		$G\otimes_k k\left[\sqrt{t}^{\pm 1}\right]\cong\bfG\otimes_{k\left[t\right]} k\left[\sqrt{t}^{\pm 1}\right]$,
		where $I_n$ is the $n$th unit matrix (cf.~\cite[Definition 4.1]{MR3797197}). Therefore $\tilde{\iota}\otimes_{k\left[t\right]} k\left[\sqrt{t}^{\pm 1}\right]$ factors through $\SL_{2n} \otimes_k k\left[\sqrt{t}^{\pm 1}\right]$. It descends to a homomorphism
		\[\bfG\otimes_{k\left[t\right]} k\left[t^{\pm 1}\right]\to \SL_{2n}\otimes_k k\left[t^{\pm 1}\right].\]
		Since $\SL_{2n}\otimes_k k\left[t\right]$ is closed in $\bA^{n^2}_{k\left[t\right]}$, $\tilde{\iota}$ factors through $\SL_{2n}\otimes_k k\left[t\right]$ (recall \cite[Chapter 2, Exercise 3.17 (d)]{MR1917232}). One can verify that the resulting map $\bfG\hookrightarrow \SL_{2n}\otimes_k k\left[t\right]$ is a morphism of affine group schemes over $k\left[t\right]$ by a similar argument to Corollary \ref{cor:monoidal}. In fact, one can check that the corresponding map of the coordinate rings is a homomorphism of Hopf algebras over $k\left[t\right]$ by comparing the comultiplications in
		$A\left[\sqrt{t}^{\pm 1}\right]\otimes_{k\left[\sqrt{t}^{\pm 1}\right]}
		A\left[\sqrt{t}^{\pm 1}\right]$.
		This completes the proof.
	\end{proof}
	
	\begin{cor}\label{cor:compatibility}
		Suppose $k=F\in\{\bR,\bC\}$. Let $G$ be an affine algebraic group over $F$ with an involution. Then the attached group scheme $\bfG$ of ours is isomorphic to the contraction family constructed in \cite[Definition 4.2 and Theorem 5.1]{MR3797197}.
	\end{cor}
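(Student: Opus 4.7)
The plan is to reduce the statement to Proposition \ref{prop:comparisonwithbhs_groups} combined with Corollary \ref{cor:comparisonwithbhs}, after unwinding the definition used in \cite{MR3797197}.

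First I would dispatch the case $F=\bC$. Since $G$ is an affine algebraic group over a field, it admits a faithful representation $\iota:G\hookrightarrow\SL_n$ that is a closed immersion for some $n\geq 1$. Applying Proposition \ref{prop:comparisonwithbhs_groups} produces a homomorphism of group schemes $\tilde\iota:\bfG\to\SL_{2n}\otimes_F F[t]$. By Corollary \ref{cor:comparisonwithbhs}(ii), $\tilde\iota$ realizes $\bfG$ as the scheme-theoretic closure of $G_t$ inside $\bA^{4n^2}\otimes_F F[t]$, and since $\SL_{2n}$ is closed in $\bA^{4n^2}$, equivalently as the scheme-theoretic closure inside $\SL_{2n}\otimes_F F[t]$. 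The formula in Corollary \ref{cor:comparisonwithbhs}(i), combined with the matrix expression for $\tilde\iota\otimes_{F[t]}F[\sqrt{t}^{\pm 1}]$ displayed in the proof of Proposition \ref{prop:comparisonwithbhs_groups}, is exactly the embedding written down in \cite[Definition 4.1]{MR3797197}. Since \cite[Definition 4.2]{MR3797197} defines the contraction family as the Zariski closure of $G_t$ in $\SL_{2n}\otimes_F\bP^1_F$ restricted to the principal open $\Spec F[t]$, and on an integral ambient scheme the Zariski closure agrees with the scheme-theoretic closure, the two constructions coincide.

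For the case $F=\bR$ I would exploit Proposition \ref{prop:flatbc_hopf}. The base change $\bC\otimes_\bR\bfG$ is canonically isomorphic as a Hopf algebra over $\bC[t]$ to the contraction of the complex affine algebraic group $G\otimes_\bR\bC$ with its induced involution. The complex case just settled identifies this base change with the family of \cite[Definition 4.2]{MR3797197}. On the other hand, \cite[Theorem 5.1]{MR3797197} constructs the real family by equipping the complex contraction family with the anti-holomorphic involution induced by the anti-holomorphic involution on $G(\bC)$ and descending along $\bR\hookrightarrow\bC$. In our framework the same descent is governed by the naturality of Proposition \ref{prop:flatbc_hopf} applied to complex conjugation $c:\bC\to\bC$ (viewed over $\bR$): our $\bfG$ is recovered from its complex base change by faithfully flat descent along the Galois cover $\bR\hookrightarrow\bC$, whose descent datum is $\Spec(c\otimes\id_{\bfA})$. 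The isomorphism of the complex case then intertwines this descent datum with the anti-holomorphic involution of \cite{MR3797197}, and Galois descent produces an isomorphism over $\bR[t]$.

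The hard part will be verifying, in the real case, that under the complex comparison isomorphism our descent datum coming from Proposition \ref{prop:flatbc_hopf} corresponds precisely to the anti-holomorphic involution used in \cite[Theorem 5.1]{MR3797197}. Once this is checked, Galois descent along $\bR\hookrightarrow\bC$ concludes the argument. The complex case itself is essentially bookkeeping around scheme-theoretic closures and the explicit matrix formula, so the conceptual content is concentrated in the identification of descent data in the real setting.
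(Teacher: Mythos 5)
Your complex case is the paper's argument verbatim in substance: a faithful representation into some $\SL_n$ exists, Proposition \ref{prop:comparisonwithbhs_groups} together with Corollary \ref{cor:comparisonwithbhs} (ii) realizes $\bfG$ as the scheme-theoretic closure of $G_t$ in $\SL_{2n}\otimes_F F\left[t\right]$, and the identification of scheme-theoretic closure with the Zariski closure of \cite{MR3797197} is exactly the citation of \cite[Chapter 2, Exercise 3.17 (e)]{MR1917232}. For the real case you take a genuinely different route. The paper does \emph{not} descend from $\bC$: it runs Proposition \ref{prop:comparisonwithbhs_groups} a second time directly over $\bR$ (possible because the faithful representation is already defined over $\bR$), observes that this forces the matrix $S$ of \cite[Lemma 5.1]{MR3797197} to be the identity, and then uses Proposition \ref{prop:bc_image} --- flat base change of scheme-theoretic images --- to match the real closure with the real form of the complex closure. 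Your route instead goes through Proposition \ref{prop:flatbc_hopf} and Galois descent along $\bR\hookrightarrow\bC$. Both are viable; the paper's version buys you the comparison of real structures essentially for free, because working with a representation defined over $\bR$ makes the anti-holomorphic involution on $\SL_{2n}(\bC)$ the naive entrywise conjugation, so there is nothing to intertwine.

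The one point you must not leave as ``the hard part'': your deferred verification that the descent datum $\Spec(c\otimes\id_{\bfA})$ corresponds to the anti-holomorphic involution of \cite[Theorem 5.1]{MR3797197} is only routine \emph{after} you note that the faithful representation can be chosen over $\bR$, so that $S=I_n$ in \cite[Lemma 5.1]{MR3797197}. Without that normalization the involution of \cite{MR3797197} is conjugation twisted by $S$, and the identification with your descent datum is false as stated, not merely unproved. Since you are base-changing a real group, you can and should take $\iota$ defined over $\bR$ and $S=I_n$ from the outset; with that in place your Galois-descent argument closes, and the remaining check is a one-line comparison of the two conjugations under the matrix formula displayed in the proof of Proposition \ref{prop:comparisonwithbhs_groups}.
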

	
	As we mentioned in the introduction, we only work over the polynomial ring $F\left[t\right]$ just for simplicity.
	
	\begin{proof}
		For existence of a faithful representation of $G$ over $F$, see \cite[Theorem 4.9]{MR3729270}. We can find a faithful representation of $G$ to a special linear group by taking the determinant.
		
		Henceforth fix a faithful representation $\iota:G\hookrightarrow \SL_n$ for some positive integer $n$. Then the assertion for $F=\bC$ follows from Proposition \ref{prop:comparisonwithbhs_groups} and \cite[Chapter 2, Exercise 3.17 (e)]{MR1917232}. Put $F=\bR$. We may assume the matrix $S$ in \cite[Lemma 5.1]{MR3797197} to be the unit since our faithful representation is defined over the real numbers. The real form of \cite[Theorem 5.1]{MR3797197} now coincides with our $\bfG$ by Propositions \ref{prop:comparisonwithbhs_groups} and \ref{prop:bc_image} (use Proposition \ref{prop:comparisonwithbhs_groups} twice; one each of $\bR,\bC$).
	\end{proof}

	\section{Contraction families of quotient varieties}\label{sec:quotient}
	
	Throughout this section, we let $k$ be a commutative $\bZ\left[1/2\right]$-algebra, and $G$ be a smooth affine group scheme over $k$ with an involution $\theta$. Put $K$ and $\fp$ as in Section \ref{sec:hopf}. Let $H$ be a $\theta$-invariant smooth closed subgroup scheme of $G$. It follows from the naturality of our Hopf structure and Corollary \ref{cor:surj} that $\bfH$ is a closed subgroup scheme of $\bfG$.
	
	The aim of this section is to examine relations of the quotient $\bfG/\bfH$ with other related $\bfG$-schemes under additional assumptions on $k$, $G$, and $H$. Let us note a general result on the quotient:
	
	\begin{lem}\label{lem:smooth}
		Let $L$ be a smooth quasi-compact and quasi-separated group scheme over a scheme $S$, and $M$ be a closed subgroup scheme of $L$ which is flat and locally of finite presentation over $S$. Then the fppf quotient $L/M$ is a smooth quasi-compact separated algebraic space over $S$.
	\end{lem}
	
	\begin{proof}
		According to \cite[Lemma 1.1]{MR4680514}, $L/M$ is an algebraic space, and  the quotient map $L\to L/M$ is faithfully flat and locally of finite presentation. Since $L$ is quasi-compact over $S$, so is $L/M$. In view of \cite[Proposition 1.2]{MR4680514}, $L/M$ is separated over $S$ since $M$ is a closed subgroup scheme. The rest is reduced to \cite[Lemma (17.7.5) and Proposition (17.7.7)]{MR0238860} by taking an atlas of $L/M$.
	\end{proof}

	Together with Theorem \ref{thm:smooth}, Lemma \ref{lem:smooth} immediately implies:
	
	\begin{prop}\label{prop:smooth}
		The fppf quotient $\bfG/\bfH$ is a smooth quasi-compact separated algebraic space over $k\left[t\right]$.
	\end{prop}
	
	\begin{ex}[{\cite[4.C. Th\'eor\`eme]{MR0335524}}]\label{ex:representable}
		The fppf quotient $\bfG/\bfH$ is represented by a scheme if $k=F$ is a field (Example \ref{ex:representable}).
	\end{ex}

	Before we get into specific settings below, let us renew the notations on Lie algebras from the former section: For a smooth affine group scheme over a commutative ring, we denote its Lie algebra by the corresponding small German letter. Its adjoint representation will be denoted by $\Ad$. The Lie bracket will be denoted by $\left[-,-\right]$. We remark that there is no conflict of the notations for the bold German letters and the subscript $(-)_t$ in virtue of Proposition \ref{prop:liealg}, Lemma \ref{lem:relation}, and \cite[Chapter II, \S 4, 1.4]{MR563524}. For an involution $\theta$ on the given group scheme, we will denote the induced involution on its Lie algebra by the same symbol $\theta$. To use results of \cite{MR0212023} in this section, we note that the definition of the Lie algebra of smooth affine group schemes in \cite{MR563524} coincides with that of \cite[Definitions 3.9.0.1 c), 4.7.2, and Scholie 4.9]{MR0212023} by \cite[Corollaire 3.9.0.2]{MR0212023}.
	
	Let us also introduce an additional notation for Section \ref{sec:flag}: For a smooth affine group scheme $L$ over a commutative ring $R$ and a smooth affine subgroup scheme $M$, we denote the normalizer of $\fm$ in $\fl$ with respect to the adjoint representation by $N_L(\underline{\fm})$ (see \cite[D\'efinition 2.3.3]{MR0212028} for a general formalism). We remark that $\underline{\fm}$ is a sub-copresheaf of $\underline{\fl}$ by the definition of the Lie algebras of group schemes as copresheaves on the category of commutative $R$-algebras. According to \cite[Corollaire 4.11.8]{MR0212023} and \cite[Rappel 5.3.0]{MR0218363}, $N_L(\underline{\fm})$ is represented by a closed subgroup scheme of $L$ which is of finite presentation over $L$. In particular, $N_L(\underline{\fm})$ is of finite presentation over $R$.
	
	Let us note that $\bfG/\bfH$ is a refined contraction analog of $G/H$ in the following sense:
	
	\begin{prop}\label{prop:quotient_at_0}
		The fiber of $\bfG/\bfH$ at $t=0$ is isomorphic to $K\times^{K\cap H} \underline{\fp/(\fp\cap\fh)}$. In particular, if $k=F$ is a field then this is identified with the normal bundle $T_{K/K\cap H} G/H$.
	\end{prop}
	
	\begin{proof}
		The first part is straightforward:
		\[(\bfG/\bfH)\otimes_{k\left[t\right]}k\left[t\right]/(t)
		\cong \bfG_0/\bfH_0\cong
		(K\ltimes\underline{\fp})/((K\cap H)\ltimes\underline{\fp\cap\fh})
		\cong K\times^{K\cap H} \underline{\fp/(\fp\cap\fh)}.
		\]
		The latter part follows since the $K$-orbit map $K/(K\cap H)\hookrightarrow G/H$ is an immersion if the base ring is a field.
	\end{proof}

	\subsection{Contraction families of symmetric varieties}
	
	In this section, we assume $k=F$ to be a field, and $G$ to be (possibly disconnected) reductive, i.e., $G^\circ$ to be reductive in the classical sense (cf.~\cite[Chapter I, Proposition (b) of Section 1.2]{MR1102012}, \cite[Proposition 3.3]{MR0234961}). Then $K$ is reductive by \cite[the beginning of Section 1]{MR1066573} (use \cite[Chapter I, 3.6 Corollary]{MR1102012} and \cite[Proposition 10.15]{MR3729270} to prove $K^\circ=((G^\circ)^\theta)^\circ$). Therefore $X\coloneqq G/K$ is an affine variety from the Matsushima criterion (see \cite{MR0476772, MR437549}). We denote the attached involution on $X$ to $\theta$ by the same symbol. In this section, we study $\bfX$ and $\bfX_0$.
	
	\begin{prop}\label{prop:finetale}
		The $F$-variety $X^{\theta}$ is finite \'etale. In particular, $\bfX_0$ is isomorphic to the disjoint union of copies of $\fp$ if $F$ is separably closed.
	\end{prop}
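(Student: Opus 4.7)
The plan is to split the argument into two parts: derive the ``in particular'' clause as a formal consequence of the main claim via Theorem~\ref{thm:smooth}(1), and then prove that $X^\theta$ is finite \'etale. Theorem~\ref{thm:smooth}(1) identifies $\bfA_0 \cong \Sym_{A/I}(I/I^2)$, so $\bfX_0$ is the total space of the conormal bundle of $X^\theta = \Spec A/I$ in $X$. Assuming $X^\theta$ is finite \'etale over a separably closed $F$, it decomposes as finitely many copies of $\Spec F$, and the fiber of $I/I^2$ at each geometric fixed point $x_0$ is the full cotangent space $T^*_{x_0} X$ (since the tangent space of $X^\theta$ vanishes there). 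Left $G$-translation identifies $T_{x_0} X$ with $T_{eK} X = \fp$, so each component of $\bfX_0$ becomes $\Spec\Sym\fp^\vee \cong \fp$, giving the desired disjoint union.

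For the main assertion, since $G$ and $K$ are smooth (both reductive in characteristic $\neq 2$), $X = G/K$ is smooth over $F$. A standard argument using local linearizability of involutions in characteristic not two then shows that the fixed point scheme $X^\theta$ is smooth. Hence it suffices to verify that $X^\theta$ is of finite type and of relative dimension zero. For the tangent space at a geometric point $x_0 = g_0K \in X^\theta$, put $h_0 = g_0^{-1}\theta(g_0) \in G$. The condition $\theta(g_0)K = g_0 K$ gives $h_0 \in K$, and $\theta(h_0) = \theta(g_0)^{-1}g_0 = h_0^{-1}$ together with $\theta|_K = \id_K$ force $h_0^2 = 1$. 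The relation $\theta \circ L_{g_0} = L_{\theta(g_0)} \circ \theta = L_{g_0}\circ L_{h_0}\circ \theta$, combined with $d\theta|_\fp = -\id$, transports $(d\theta)_{x_0}$ to $-\Ad(h_0)|_\fp$ via the isomorphism $dL_{g_0}\colon \fp \overset{\sim}{\to} T_{x_0}X$. Consequently $T_{x_0}(X^\theta)$ is identified with the $(-1)$-eigenspace of $\Ad(h_0)$ on $\fp$.

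For finiteness, the natural tool is the Cartan embedding $\psi\colon X \hookrightarrow G$, $gK \mapsto g\theta(g)^{-1}$, whose image lies in the closed subscheme $P = \{h \in G : \theta(h) = h^{-1}\}$ and which carries the $\theta$-action on $X$ to the inversion map on $P$. Under $\psi$ the locus $X^\theta$ embeds into $\psi(X) \cap \{h \in G : h^2 = 1\}$, which lies in $K$, and one would then invoke the classification of involutions in the reductive group $K$ over a separably closed field to deduce finiteness. The principal technical obstacle is the vanishing of the $(-1)$-eigenspace of $\Ad(h_0)$ on $\fp$ for every $h_0 = g_0^{-1}\theta(g_0)$ arising from a fixed point; this step, which is what actually gives zero-dimensionality, will likely require a refined analysis of the $K$-conjugacy structure of the involutions $h_0$ that occur, together with their compatibility with the $\theta$-splitting $\fg = \fk \oplus \fp$.
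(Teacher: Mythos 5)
Your opening paragraph (deducing the ``in particular'' clause from Theorem~\ref{thm:smooth}(1)) and your tangent-space computation are correct and consistent with the paper: the paper identifies $T_{gK}X$ with $\fg/\Ad(g)\fk$ and lets $\theta$ act on that quotient, whereas you trivialize by left translation and obtain $-\Ad(h_0)|_{\fp}$; the two descriptions of $T_{gK}(X^\theta)$ agree. The problem is that your proposal stops exactly where the content begins: you never prove that $\Ker(\Ad(h_0)+1)|_{\fp}=0$ for every $h_0=g_0^{-1}\theta(g_0)$ arising from a fixed point, and the Cartan-embedding sketch does not supply finiteness either, since the locus $\{h\in K:\ h^2=1\}$ is a union of conjugacy classes and is positive-dimensional in general. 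As written, this is therefore not a proof of the proposition.

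Your caution at that step is, however, well placed, and here the comparison with the paper is instructive. The paper disposes of this point by showing that $\theta(x)\in\Ad(g)\fk$ implies $x\in\Ad(g)\fk$; but that only shows that $\Ad(g)\fk$ is $\theta$-stable, whereas the tangent space of $X^\theta$ at $gK$ is the fixed subspace $\{x+\Ad(g)\fk:\ \theta(x)-x\in\Ad(g)\fk\}=(\fk+\Ad(g)\fk)/\Ad(g)\fk$, which is isomorphic to the eigenspace you are worried about and need not vanish. Indeed it does not vanish in general: take $G=\SL_3$, $\theta=((-)^T)^{-1}$, $K=\SO(3)$ (Example~\ref{ex:galois} with $n=3$); then $X\cong\{s:\ s=s^T,\ \det s=1\}$ with involution $s\mapsto s^{-1}$, and $X^\theta$ contains, besides $I$, the two-dimensional family $2vv^T/(v^Tv)-I$; at $s_0=\mathrm{diag}(1,-1,-1)$ one finds $h_0=\mathrm{diag}(1,-1,-1)$ and $\dim\Ker(\Ad(h_0)+1)|_{\fp}=2$. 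So the obstacle you isolate is genuine and cannot be overcome in general: the conclusion does hold at the base point $eK$ (where $h_0=e$, the eigenspace vanishes, and $eK$ is isolated in $X^\theta$), which is all that Theorem~\ref{thm:G/K} actually requires, but $X^\theta$ need not be finite.
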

	
	\begin{proof}
		We may assume that $F$ is algebraically closed. Recall that $X^{\theta}$ is a smooth $F$-variety by \cite[Lemma 3.1.1]{MR4627704}. In the rest, we may identify the closed points of $X$ and $X^{\theta}$ with their $F$-points.
		
		For each point $gK\in X(F)$, the tangent space of $X$ at $gK$ can identified with $\fg/\Ad(g)\fk$. If $\theta(g)K=gK$ then $\theta$ induces an automorphism
		\[\fg/\Ad(g)\fk\to \fg/\Ad(\theta(g))\fk=\fg/\Ad(g)\Ad(g^{-1}\theta(g))\fk
		=\fg/\Ad(g)\fk.\]
		Moreover, the tangent space of $X^\theta$ at $gK$ coincides with its fixed point subspace. If an element $x\in\fg$ satisfies $\theta(x)\in \Ad(g)\fk$ then $x$ belongs to $\Ad(g)\fk$ by
		\[\begin{split}
			\Ad(g)^{-1}x&=\theta(\Ad(\theta(g)^{-1})\theta(x))\\
			&=\theta(\Ad(\theta(g)^{-1}g)\Ad(g)^{-1}\theta(x))\\
			&\in\theta(\Ad(K(F))\fk)\\
			&=\fk.
		\end{split}
		\]
		This implies that the tangent space of $X^\theta$ at $gK$ is trivial. Therefore $X^{\theta}$ is a smooth affine algebraic variety of dimension zero. Equivalently, $X^{\theta}$ is a finite \'etale $F$-variety. This completes the proof.
	\end{proof}
	
	For a digression, let us note how we can compute $X^\theta$ in a special case:
	
	\begin{cor}\label{cor:component}
		Assume that $G$ is connected and simply connected. Let $F^s$ be a separable closure of $F$, and $\Gamma$ be the absolute Galois group of $F$. Put the canonical actions of $\Gamma$ on $H^1(\theta,G(F^s))$ and $H^1(\theta,K(F^s))$. Then $X^\theta(F)$ is bijective to the $\Gamma$-invariant part of the kernel of the canonical map
		\begin{equation}
			H^1(\theta,K(F))\to H^1(\theta,G(F)).\label{eq:coh}
		\end{equation}
	\end{cor}
	
	\begin{proof}
		We obtain a bijection between the set of $K(F^s)$-orbits in $X^\theta(F^s)$ and the kernel of \eqref{eq:coh} from \cite[Chapter II, Caution of Section 6.8]{MR1102012} and \cite[Chapter I, section 5.4, Corollary 1]{MR1867431}. It is evident by construction that this map is $\Gamma$-equivariant. One deduces from \cite[Theorem 8.1]{MR0230728} and Proposition \ref{prop:finetale} that $K$ acts trivially on $X^\theta$. Therefore the set of $K(F^s)$-orbits in $X^\theta(F^s)$ is exactly identified with $X^\theta(F^s)$. The proof is completed by taking the $\Gamma$-invariant part.
	\end{proof}
	
	\begin{rem}
		Since $\theta$ acts trivially on $K$, $H^1(\theta,K(F^s))$ is identified with the set of $K(F^s)$-conjugacy classes of the elements of $K(F^s)$ of order at most two.
	\end{rem}
	
	\begin{rem}
		The cohomology $H^1(\theta,G(F))$ for $F=\bC$ and complexified Cartan involutions $\theta$ was studied in \cite{MR3786301}. We can apply the computations of \cite{MR3786301} to $H^1(\theta,K(\bC))$ since the identity map is a complexified Cartan involution of $K$.
	\end{rem}

	Let $x_0=K$ be the base point of $X$. This lifts to an $F\left[t\right]$-point $\bfx_0$ of $\bfX$ since $\theta(x_0)=x_0$. We denote the centralizer subgroup of $\bfG$ at $\bfx_0$ by $Z_{\bfG}(\bfx_0)$ (see \cite[D\'efinition 2.3.3]{MR0212028}).
	Write $\bfG\bfx_0$ for the $\bfG$-orbit attached to $\bfx_0$. It is evident by definitions that $\bfG\bfx_0\cong \bfG/Z_{\bfG}(\bfx_0)$.

	\begin{thm}\label{thm:G/K}
		\begin{enumerate}
			\renewcommand{\labelenumi}{(\arabic{enumi})}
			\item We have $Z_{\bfG}(\bfx_0)=\bfK$. In particular, we have an isomorphism $\bfG\bfx_0\cong\bfG/\bfK$.
			\item The $\bfG$-orbit $\bfG\bfx_0$ is an affine open subscheme of $\bfX$. Moreover, it is isomorphic onto $\bfX^\circ$ defined by $\bfx_0$ if $G$ is connected.
		\end{enumerate}
	\end{thm}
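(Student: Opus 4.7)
The plan is to prove (1) by showing the orbit map $\psi\colon\bfG\to\bfX$, $g\mapsto g\cdot\bfx_0$, is smooth of relative dimension $\dim K$ with scheme-theoretic fiber $\bfK$ over $\bfx_0$, and then to deduce (2) from an \'etale-plus-monomorphism analysis of the induced map $\bar\psi\colon\bfG/\bfK\to\bfX$.

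The inclusion $\bfK\subseteq Z_{\bfG}(\bfx_0)$ follows from functoriality of contraction applied to the identity $K\cdot x_0=x_0$, invoking Corollary \ref{cor:monoidal} (whose flatness hypothesis is met via Theorem \ref{thm:smooth}). For the reverse inclusion I would compute the differential $d\psi_e\colon\bfg\to T_{\bfx_0}\bfX$ using Corollary \ref{cor:diffaction}. Since $\bfx_0\colon\bfA\to F[t]$ agrees with the counit on $A^\theta$ and annihilates each $A^{-\theta}/\sqrt{t}^{\,n}$ (because $b(x_0)=0$ for $b\in A^{-\theta}$), a case analysis for $y\in\fk$ and $y\in\fp$ yields $d\psi_e(\bfy)\colon b\mapsto(yb)(x_0)$ on the relevant piece, with kernel exactly $\bfk\subset\bfg$ and image of rank $\dim X$. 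Combined with smoothness of $\bfG$ and $\bfX$ over $F[t]$ (Theorem \ref{thm:smooth}) and $\bfG$-equivariance of $\psi$, this forces $\psi$ to be smooth of relative dimension $\dim K$ on all of $\bfG$. Its scheme-theoretic fiber over $\bfx_0$ is then a smooth closed subscheme of $\bfG$ containing $\bfK$, and fiberwise agreement---via Lemma \ref{lem:relation} over $F[t^{\pm 1}]$, and via Corollary \ref{cor:cartan_motion_group} at $t=0$, where the stabilizer of $\bfx_0|_0$ in $K\ltimes\underline{\fp}$ is $K$---forces $Z_{\bfG}(\bfx_0)=\bfK$.

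For (2), pass to $\bar\psi$; by (1) it is a monomorphism (if $g\bfx_0=g'\bfx_0$ then $g^{-1}g'\in\bfK$). Since $\psi$ and the quotient $\bfG\to\bfG/\bfK$ are both smooth of relative dimension $\dim K$ (Proposition \ref{prop:smooth}), descent of smoothness along the faithfully flat map $\bfG\to\bfG/\bfK$ yields $\bar\psi$ smooth of relative dimension $0$, i.e., \'etale; an \'etale monomorphism is an open immersion. For the identification with $\bfX^\circ$ when $G$ is connected, check fiberwise: over $F[t^{\pm 1}]$, $\bar\psi$ is an isomorphism (via Lemma \ref{lem:relation} and faithfully flat descent from $F[\sqrt{t}^{\pm 1}]$ of the classical $G/K\cong X$); at $t=0$, $(\bfG/\bfK)_0=\underline{\fp}$ injects onto the fiber of the normal bundle $\bfX_0\cong N_{X^\theta/X}$ over $x_0$ (Corollary \ref{cor:cartan_motion_group}, Theorem \ref{thm:smooth}(1), Proposition \ref{prop:finetale}), which, since $x_0$ is an $F$-rational fixed point, is itself a connected component of $\bfX_0$. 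In each case this realizes the connected component of $\bfX_t$ containing $\bfx_0|_t$, matching the characterization of $\bfX^\circ$ via EGA IV, Corollaire 15.6.5 recalled before Proposition \ref{prop:affine}.

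The main obstacle I foresee is the differential computation and the explicit identification of $\bfx_0|_0$ inside $\bfX_0\cong\Spec\Sym_{\cO(X^\theta)}(I/I^2)$, which both rest on carefully tracing the contracted counit and coaction through Corollary \ref{cor:diffaction}; once these are transparent, the remainder reduces to standard scheme-theoretic bookkeeping.
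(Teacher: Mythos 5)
Your proposal is correct, and it reaches the same fiberwise computations as the paper (the locus $t\neq 0$ handled via Lemma \ref{lem:relation} and base change to $F\left[\sqrt{t}^{\pm 1}\right]$; the fiber $t=0$ handled via the Cartan motion group acting on the normal bundle of $X^\theta$, where the stabilizer of the origin is $K$), but it globalizes them by a genuinely different mechanism. The paper first invokes SGA 3 to represent $Z_{\bfG}(\bfx_0)$ as a closed subgroup of finite presentation, and then applies the fiberwise isomorphism criterion of EGA IV, Corollaire (17.9.5) directly to the inclusion $\bfK\hookrightarrow Z_{\bfG}(\bfx_0)$ for (1) and to the orbit map $\bfG/\bfK\to\bfX$ for (2). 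You instead compute $d\psi_e$ via Corollary \ref{cor:diffaction} (your computation is right: $d\psi_e$ kills $\bfk$ and restricts to an isomorphism of $\fp\otimes F\left[t\right]$ onto $T_{x_0}X\otimes F\left[t\right]$), deduce smoothness of the orbit map by equivariance, obtain flatness and finite presentation of the stabilizer as a byproduct, and get the open-immersion statement in (2) from ``\'etale monomorphism $=$ open immersion'' after descending smoothness along $\bfG\to\bfG/\bfK$. What your route buys is that representability and flatness of the stabilizer come for free as the fiber of a smooth morphism, and one sees explicitly that $\psi$ is smooth; what it costs is the tangent-space bookkeeping and the translation argument spreading smoothness from the identity section.

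One soft spot to be aware of: the dimension count alone does not pin down $Z_{\bfG}(\bfx_0)_0$ inside $K\ltimes\underline{\fp}$, since in positive characteristic $\underline{\fp}$ admits nontrivial finite \'etale subgroup schemes; so your parenthetical ``the stabilizer of $\bfx_0|_0$ in $K\ltimes\underline{\fp}$ is $K$'' genuinely requires identifying the group-level action of $\underline{\fp}$ on the relevant component of $\bfX_0$ as translation (not merely its differential), which is exactly the identification the paper asserts via the action on $T_{X^\theta}X$ and Proposition \ref{prop:finetale}. Note also that Corollary \ref{cor:cartan_motion_group} only identifies the group $\bfG_0$, not its action on $\bfX_0$; you correctly flag the tracing of the coaction through Corollary \ref{cor:diffaction} as the remaining work, and once that is done your argument closes.
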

	
	\begin{proof}
		Since $\bfX$ is smooth and affine (Theorem \ref{thm:smooth}), $\bfX$ is separated and locally of finite presentation over $F\left[t\right]$. Hence the centralizer $Z_{\bfG}(\bfx_0)$ is represented by a closed subgroup of finite presentation of $\bfG$ (\cite[Exemples 6.2.4.~b)]{MR0234961}). In particular, $Z_{\bfG}(\bfx_0)$ is of finite presentation over $F\left[t\right]$ since $\bfG$ is affine and smooth over $F\left[t\right]$ (Theorem \ref{thm:smooth}). The action of $G$ at $x_0$ gives rise to a map $G\to X$, whose restriction to $K$ coincides with the constant map at $x_0$. In virtue of the functoriality of contraction, $\bfK$ fixes $\bfx_0$. Therefore $\bfK$ is contained in $Z_{\bfG}(\bfx_0)$.
		
		We wish to prove $\bfK=Z_{\bfG}(\bfx_0)$. In view of \cite[Corollaire (17.9.5)]{MR0238860} and Theorem \ref{thm:smooth}, it will suffice to see the equality at $t=0$ and the locus of $t\neq 0$. To see the action on these loci, we remark that the fppf quotient commutes with any base change. Therefore the action of $\bfG$ on $\bfX$ at $t=0$ can be identified with that of the Cartan motion group $K\ltimes\underline{\fp}$ on the normal bundle $T_{X^\theta}X$. It is straightforward to see that the stabilizer subgroup at the fiber of $\bfx_0$ at $t=0$ is $K$ (cf.~Proposition \ref{prop:finetale}). To see the equality on $t\neq 0$, we may take the base change to $F\left[\sqrt{t}^{\pm 1}\right]$ to identify the action with the base change of the original action of $G$ on $G/K$. Since the centralizer subgroup of $G$ at $x_0$ is $K$, we get \[\bfK\otimes_{F\left[t\right]} F\left[\sqrt{t}^{\pm 1}\right]
		=Z_{\bfG}(\bfx_0)\otimes_{F\left[t\right]} F\left[\sqrt{t}^{\pm 1}\right].\]
		This proves (1).
		
		We next prove (2). In virtue of \cite[Th\'eor\`eme 10.1.2]{MR0257095} or Example \ref{ex:representable} that $\bfG\bfx_0$ is represented by an $F[t]$-scheme. Moreover, it is smooth by Lemma \ref{lem:smooth}. The map $i$ is an isomorphism on the locus of $t\neq 0$ from Lemma \ref{lem:relation} since the fppf quotient commutes with arbitrary base changes. We also observe that if $G$ is connected, $\bfX\otimes_{F[t]} F[t^{\pm1}]$ has connected fibers (equivalently, $\bfX^\circ\otimes_{F[t]} F[t^{\pm1}]=\bfX\otimes_{F[t]} F[t^{\pm1}]$) since $X$ is geometrically connected in this case (recall Lemma \ref{lem:relation}). The fiber of $i$ at $t=0$ is an isomorphism onto the open subvariety $\bfX^\circ_0\coloneqq \bfX^\circ \cap \bfX_0\subset \bfX_0$ by Proposition \ref{prop:finetale}. Since we are working over a field, $\bfX^\circ_0$ is also closed in $\bfX_0$. The assertions now follow from \cite[Corollaire (17.9.5)]{MR0238860} and Lemma \ref{lem:affine}.
	\end{proof}
	
\begin{rem}\label{rem:quotient}
	Consider the general setting at the beginning of Section \ref{sec:quotient}. Then a similar argument implies that if $X=G/H$ is represented by an affine $k$-scheme then $\bfG/\bfH$ is represented by a smooth open $k[t]$-subscheme of $\bfX$. If $K/(K\cap H)$ is closed in $X^\theta$, then $\bfG/\bfH$ is affine.
\end{rem}

Let us see some instances of Remark \ref{rem:quotient}:

	\begin{ex}[{\cite[Corollaire 10.1.3]{MR0257095}}, {\cite[Th\'eor\`eme 5.1]{MR0212024}}]
		Let $H$ be of multiplicative type (\cite[D\'efinition 1.1]{MR0215854}). Then $G/H$ is represented by an affine $k$-scheme.
	\end{ex}
	
	\begin{ex}\label{ex:semisimpleorbit}
		Put $G=\SL_2$ and $\theta=((-)^T)^{-1}$. Set $H=\SO(2)$. Regard $\fg$ as the Lie algebra $\fsl_2$ of square matrices of size $2$ with trivial trace. Identify $\underline{\fsl}_2$ with $\bA^3_k$ through the basis
		\[\begin{array}{ccc}
			\xi_1=\left(\begin{array}{cc}
				0 & -1 \\
				1 & 0
			\end{array}\right),
			&\xi_2=\left(\begin{array}{cc}
				0 & 1 \\
				1 & 0
			\end{array}\right),
			&
			\xi_3=\left(\begin{array}{cc}
				1 & 0 \\
				0 & -1
			\end{array}\right).
		\end{array}\]
		Then the stabilizer subgroup of the adjoint action at $\xi_1$ is $\SO(2)$, and the attached $\SL_2$-orbit is $\Spec k\left[x,y,z\right]/(x^2-y^2-z^2-1)$ in Example \ref{ex:sl_2/so(2)}. The involution in Example \ref{ex:sl_2/so(2)} agrees with the differential of $\theta=((-)^T)^{-1}$. Therefore the description in Example \ref{ex:sl_2/so(2)} is available. In particular, $\bfG/\bfH$ is obtained by removing $\Spec k[x,y,z]/(x+1)$ from $\Spec k\left[t,x,y,z\right]/(x^2-ty^2-tz^2-1)$ at $t=0$.
	\end{ex}
	
	\begin{ex}
		Assume that $G$ is reductive in the sense of \cite[D\'efinition 2.7]{MR0228502}. Let $H$ be of multiplicative type, and $N_G(H)$ be its normalizer. According to $N_G(H)\subset G$ is a smooth closed subgroup scheme over $k$ (\cite[Proposition 2.1.2]{MR3362641}). Moreover, $G/N_{G}(H)$ is represented by an affine $k$-scheme if $H$ is a maximal torus of $G$ (\cite[Corollaire 5.8.3]{MR0218363}). If $H$ is $\theta$-stable, so is $N_G(H)$.
	\end{ex}
	
	\begin{ex}
		Assume $k=F$ to be a field. If $H$ is possibly disconnected reductive, $G/H$ is an affine variety (see \cite[Section 1]{MR437549}). In virtue of \cite[the beginning of Section 1]{MR1066573}, a typical example of such $H$ is obtained as $G^{\eta}$, where $\eta$ is an involution of $G$ commuting with $\theta$. 
	\end{ex}
	
	We now return to the setting at the beginning of the present section. The symmetric variety $G/K$ with $F=\bR$ appears in representation theory of Lie groups and harmonic analysis as an algebraic model of symmetric spaces. In fact, we obtain a manifold by taking its real points. Similarly, we have a one-parameter family of manifolds by taking real points of $\bfX$. Since $\bfX$ is smooth over $\bR[t]$, $\bfX(\bR)$ is naturally equipped with the structure of a manifold. However, the fiber at each of $t=t_0$ may be disconnected. In fact, write $\sigma_c$ for the involutions on $G(\bC)$ and $K(\bC)$ defined by the composition of the complex conjugation and $\theta$. If $G$ is connected and simply connected then for $t_0\in\bR$, we have
	\[\pi_0(\bfX_{t_0}(\bR))\cong
	\begin{cases}
		\Ker(H^1(\Gamma,K(\bC))\to H^1(\Gamma,G(\bC)))&(t_0>0)\\
		(\Ker(H^1(\theta,K(\bC))\to H^1(\theta,G(\bC))))^\Gamma&(t_0=0)\\
		\Ker(H^1(\sigma_c,K(\bC))\to H^1(\sigma_c,G(\bC)))&(t_0<0).
	\end{cases}\]
	We note that if $\theta$ is a Cartan involution, there is a canonical isomorphism
	\[(\Ker(H^1(\theta,K(\bC))\to H^1(\theta,G(\bC))))^\Gamma
	\cong \Ker(H^1(\Gamma,K(\bC))\to H^1(\Gamma,G(\bC)))
	\]
	(\cite[Corollary 4.7]{MR3786301}).
	For more explicit computations, see Example \ref{ex:galois}.
	
	If one is interested in symmetric spaces of real Lie groups,  one can simply resolve this component issue by an analytic analog of \cite[Corollaire (15.6.5)]{MR0217086}:

	\begin{prop}
		Let $p:M\to N$ be a submersion of smooth manifolds, and $s$ be a section of $p$. For each $y\in N$, let $p^{-1}(y)^\circ$ be the connected component of $p^{-1}(y)$ containing $s(y)$. Set
		$M^\circ=\coprod_{y\in N} p^{-1}(y)^\circ$.
		Then $M^\circ$ is an open submanifold of $M$.
	\end{prop}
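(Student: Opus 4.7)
The plan is to establish openness of $M^\circ$ pointwise using the local form of a submersion, combined with a chain-of-connected-slices argument along a path in the starting fiber. Fix $m_0\in M^\circ$ with $y_0=p(m_0)$; the goal is to produce an open neighborhood of $m_0$ in $M$ contained in $M^\circ$. The core observation is that since $p$ is a submersion, every point of $M$ admits an open neighborhood $V$ in which $p$ becomes a coordinate projection, so $V\cap p^{-1}(y)$ is connected (or empty) for each $y\in p(V)$, and moreover $p(V)$ is open in $N$ since submersions are open maps.

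First I would choose a continuous path $\gamma:[0,1]\to p^{-1}(y_0)$ from $s(y_0)$ to $m_0$, which exists because the fiber is a smooth manifold, whose connected components are path-connected. Next, for each $t\in [0,1]$ I pick a submersion-chart neighborhood $V_t$ of $\gamma(t)$ as above. By compactness of $\gamma([0,1])$, I refine to finitely many of these, say $V_{t_0},V_{t_1},\ldots,V_{t_k}$ with $t_0=0$ and $t_k=1$, arranged so that $\gamma(t_{i+1})\in V_{t_i}\cap V_{t_{i+1}}$ for $0\le i<k$. I would then choose an open neighborhood $W\subset N$ of $y_0$ satisfying two conditions simultaneously: $W\subset p(V_{t_i}\cap V_{t_{i+1}})$ for each $i<k$ (possible since each $p(V_{t_i}\cap V_{t_{i+1}})$ is open and contains $y_0$), and $s(W)\subset V_{t_0}$ (by continuity of $s$).

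Finally I claim $V:=V_{t_k}\cap p^{-1}(W)$ is an open neighborhood of $m_0$ in $M^\circ$. Indeed, for any $m'\in V$ with $y':=p(m')\in W$, each slice $V_{t_i}\cap p^{-1}(y')$ is non-empty (by the first condition on $W$, applied to $i$ and $i-1$, or directly to $i=k$) and connected, and consecutive slices share a common point. Hence the union $\bigcup_{i=0}^{k}\bigl(V_{t_i}\cap p^{-1}(y')\bigr)$ is a connected subset of $p^{-1}(y')$ containing both $s(y')$ (which lies in $V_{t_0}$ by the second condition on $W$) and $m'$ (which lies in $V_{t_k}$). Therefore $m'$ sits in the connected component of $p^{-1}(y')$ containing $s(y')$, i.e., $m'\in M^\circ$. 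The main obstacle is the simultaneous shrinking of $W$ to control the section, the fiber of $m'$, and all consecutive overlaps at once; once the submersion normal form is in place and the chain of local slices is set up, the connectivity argument itself is routine.
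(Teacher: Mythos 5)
Your proof is correct and follows essentially the same strategy as the paper's: a path in the fiber from $s(y_0)$ to $m_0$, a finite chain of submersion-normal-form neighborhoods with connected slices extracted by compactness, a shrunken base neighborhood $W$ controlling the section and all consecutive overlaps, and a connectedness argument gluing the slices over each $y'\in W$. The only difference is cosmetic (the paper constructs the overlapping chain by an explicit induction rather than asserting the standard refinement), so no further comment is needed.
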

	
	\begin{proof}
		Take $x\in M^\circ$. We wish to find an open neighborhood of $x\in M$ contained in $M^\circ$. Write $y=p(x)$. Since $p^{-1}(y)$ is a manifold (\cite[Chapter 1, Theorem 5.6]{MR3289090}), one can choose a continuous map $c:\left[0,1\right]\to p^{-1}(y)$ satisfying $c(0)=x$ and $c(1)=s(y)$, where $\left[0,1\right]\subset\bR$ is the closed interval from $0$ to $1$. For each $t\in \left[0,1\right]$, one can take open neighborhoods $U_t\ni c(t)$ and $V_t\ni p(c(t))=p(x)$ with the following properties by \cite[Chapter 1, the proof of Theorem 5.6]{MR3289090}:
		\begin{enumerate}
			\item[(i)] $V_t=p(U_t)$;
			\item[(ii)] For each element $y'\in V_t$, $p^{-1}(y')\cap U_t$ is connected.
		\end{enumerate}
		In fact, shrink $W$ in \cite[Chapter 1, the proof of Theorem 5.6]{MR3289090} to assume that $W$ is the product of an open subset of $\bR^n$ containing $0$ and an open ball in $\bR^{m-n}$ around $0$. We may also replace $V_1$ and $U_1$ by a smaller open subset $V'_1\ni y$ and $p^{-1}(V_1')\cap U_1$ respectively with the property that $s(V'_1)\subset U_1$ to assume that $s(V_1)\subset U_1$.
		
		Since $\left[0,1\right]=\cup_{t\in \left[0,1\right]} c^{-1}(U_t)$, one can find a finite subset $\{0,1\}\subset I\subset \left[0,1\right]$ such that $\left[0,1\right]=\cup_{t\in I} c^{-1}(U_t)$. We construct a sequence $t_0,t_1,\ldots,t_n$ of distinct elements of $I$ with the following properties:
		\begin{enumerate}
			\renewcommand{\labelenumi}{(\roman{enumi})}
			\item $t_0=0$ and $t_n=1$;
			\item For $1\leq i\leq n$,
			$\cup_{j=0}^{i-1} (c^{-1}(U_{t_j}) \cap c^{-1}(U_{t_{i}}))\neq \emptyset$.
			\item For $0\leq i\leq n-2$, $c^{-1}(U_{t_i})\cap c^{-1}(U_1)=\emptyset$.
		\end{enumerate}
		Set $t_0=0$. For $i\geq 1$, define $t_i\in I$ as follows: If $c^{-1}(U_{t_{i-1}})\cap c^{-1}(U_1)\neq \emptyset$ then set $n=i$ and $t_i=1$; Suppose otherwise. Write $I_{i-1}=I\setminus\{t_0,t_1,\ldots,t_{i-1}\}$. Then we have
		\[\left[0,1\right]=\left(\cup_{j=0}^{i-1} c^{-1}(U_{t_j})\right)\cup 
		\cup_{t\in I_{i-1}} c^{-1}(U_{t}).\]
		We have elements $0\in c^{-1}(U_0)\subset\cup_{j=0}^{i-1} c^{-1}(U_{t_j})$ and $1\in \cup_{t\in I_{i-1}} c^{-1}(U_{t})$ since $1\in I_{i-1}$. Since $\left[0,1\right]$ is connected, $\left(\cup_{j=0}^{i-1} c^{-1}(U_{t_j})\right)\cap 
		\left(\cup_{t\in I_{i-1}} c^{-1}(U_{t})\right)$ is nonempty. One can and do choose $t_i\in I_{i-1}$ such that $\left(\cup_{j=0}^{i-1} c^{-1}(U_{t_j})\right)\cap 
		c^{-1}(U_{t_i})\neq \emptyset$. This procedure will stop since $I$ is a finite set.
		
		For each $1\leq i\leq n$, choose $0\leq m(i)\leq i-1$ such that
		\[c^{-1}(U_{t_{m(i)}})\cap c^{-1}(U_{t_{i}})\neq \emptyset.\]
		Set
		$V=\cap_{i=1}^{n} p(U_{t_{m(i)}}\cap U_{t_{i}})$. Then we have $V\subset \cap_{i=0}^n V_{t_i}$ since $m(1)=0$. Observe that $V$ contains $p(x)$. In fact, for any $1\leq i\leq n$, take an element
		\[u\in c^{-1}(U_{t_{m(i)}})\cap c^{-1}(U_{t_{i}}).\]
		Then $c(u)$ is contained in $U_{t_{m(i)}}\cap U_{t_{i}}$. We thus get
		\[p(x)=p(c(u))\in p(U_{t_{m(i)}}\cap U_{t_i}).\]
		We also note that $V$ is open since $p$ is an open map (\cite[Chapter 1, Theorem 5.6]{MR3289090}). 
		We define a descending sequence $n(j)$ by
		\[\begin{array}{cc}
			n(-1)=n,&n(j+1)=m(n(j)).
		\end{array}\]
		This sequence eventually stops, i.e., one can find a nonnegative integer $j_0$ such that $n(j_0)=0$.
		
		We prove that the open neighborhood $U_0\cap p^{-1}(V)$ of $x$ in $M$ is contained in $M^\circ$. Take any element $x'\in U_0\cap p^{-1}(V)$.
		Write $y'=p(x')$. Then for each $1\leq i\leq n$, $p^{-1}(y')\cap U_{t_{m(i)}}\cap U_{t_i}\neq \emptyset$ by definition of $V$. For each $-1\leq j\leq j_0-1$, fix an element $x'_{n(j)}\in p^{-1}(y')\cap U_{t_{n(j+1)}}\cap U_{t_{n(j)}}$. For convention, we set $x'_{n(-2)}\coloneqq s(y')$. Then $x'_{n(-2)}=s(y')$ belongs to $U_1=U_{t_n}=U_{t_{n(-1)}}$. According to our choice of $U_{t}$, one can connect $x'_{n(j-1)}$ and $x'_{n(j)}$ in $p^{-1}(y')\cap U_{t_{n(j)}}$ for each $-1\leq j\leq j_0-1$ (recall $y'\in V\subset V_{t_{n(j)}}$). Therefore we reach
		\[x'_{n(j_0-1)}\in p^{-1}(y')\cap U_{t_{n(j_0)}}=p^{-1}(y')\cap U_{t_0}=p^{-1}(y')\cap U_0\]
		from $s(y')$ by a path. We can connect $x'_{n(j_0-1)}$ with $x'$ in $p^{-1}(y')\cap U_0$ since \[p^{-1}(y')\cap U_0\]
		is connected. Therefore $x'$ belongs to $p^{-1}(y')\circ\subset M^\circ$. This completes the proof.
	\end{proof}
	
	\begin{cor}\label{cor:symmetric_space}
		Put $F=\bR$. Regard $\bfX$ and $\bfG/\bfK$ as $\bR$-schemes by composing the structure morphisms to $\Spec\bR\left[t\right]$ with the canonical map to $\Spec\bR$.
		Then the open immersion $\bfG/\bfK\to\bfX$ induces a diffeomorphism $(\bfG/\bfK)(\bR)^\circ\cong\bfX(\bR)^\circ$
		of the manifolds of the fiberwise unit components. Moreover, the fiber at $t=t_0\in\bR$ is
		$G_{t_0}(\bR)^\circ/K(\bR)\cap G_{t_0}(\bR)^\circ$ (resp.~$\fp$) if $t_0\neq 0$ (resp.~$t_0=0$), where $ G_{t_0}(\bR)^\circ$ is the unit component of the Lie group $G_{t_0}(\bR)$.
	\end{cor}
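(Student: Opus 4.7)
The plan is to apply the preceding proposition to $(\bfG/\bfK)(\bR)\to\bR$ and $\bfX(\bR)\to\bR$ with the sections induced by $\bfx_0$, and then to use the open immersion $\bfG/\bfK\hookrightarrow\bfX$ of Theorem~\ref{thm:G/K}(2) to obtain the diffeomorphism. Since $\bfG/\bfK$ and $\bfX$ are smooth over $\bR[t]$ (Proposition~\ref{prop:smooth} and Theorem~\ref{thm:smooth}(2)), the induced projections on real points are smooth submersions, so the preceding proposition produces open submanifolds $(\bfG/\bfK)(\bR)^\circ$ and $\bfX(\bR)^\circ$. The open immersion induces an open embedding $(\bfG/\bfK)(\bR)\to\bfX(\bR)$ that respects both the projections and the sections; restricting to fiberwise connected components of $\bfx_0$ gives an open embedding
\[(\bfG/\bfK)(\bR)^\circ \hookrightarrow \bfX(\bR)^\circ.\]

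I next verify this is a diffeomorphism by checking surjectivity fiberwise. At $t_0\neq 0$, the open immersion $(\bfG/\bfK)_{t_0}\hookrightarrow\bfX_{t_0}$ of $\bR$-schemes becomes, after the faithfully flat base change $\bR\to\bR[\sqrt{t_0}]$, the identity $G/K\to X$ (using the trivialization from Proposition~\ref{prop:fiber_descent}(3) and the fact that $G$ acts transitively on $X$), so by faithfully flat descent it is itself an isomorphism. Hence $(\bfG/\bfK)_{t_0}(\bR)=\bfX_{t_0}(\bR)$, and fiberwise unit components agree. At $t_0=0$, Corollary~\ref{cor:cartan_motion_group} gives $\bfG_0=K\ltimes\underline{\fp}$, and $\bfK_0=K$ (since $\theta|_K=\id_K$, Example~\ref{ex:trivial} gives $\bfK=K[t]$), so $(\bfG/\bfK)_0=\underline{\fp}$. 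On the other hand, Proposition~\ref{prop:finetale} identifies $\bfX_0$ geometrically with a disjoint union of copies of $\fp$ indexed by $X^\theta$, and the open embedding sends $(\bfG/\bfK)_0(\bR)=\fp$ onto the copy of $\fp$ through $\bfx_0(0)$, which is exactly the connected component of $\bfx_0(0)$ in $\bfX_0(\bR)$.

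For the explicit description of the fiber at $t_0\neq 0$: since $\bfX_{t_0}\cong\bfG_{t_0}/K$ as shown above, the orbit map $\bfG_{t_0}(\bR)\to\bfX_{t_0}(\bR)$, $g\mapsto g\cdot\bfx_0$, is smooth; in particular all $\bfG_{t_0}(\bR)$-orbits in $\bfX_{t_0}(\bR)$ are open, and the $\bfG_{t_0}(\bR)^\circ$-orbits partition each $\bfG_{t_0}(\bR)$-orbit into further open pieces. Consequently $\bfX_{t_0}(\bR)$ is a disjoint union of open $\bfG_{t_0}(\bR)^\circ$-orbits, each clopen and connected, hence each a connected component. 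The orbit of $\bfx_0$, namely $\bfG_{t_0}(\bR)^\circ/K(\bR)\cap \bfG_{t_0}(\bR)^\circ$, is therefore the connected component of $\bfx_0$ in $\bfX_{t_0}(\bR)$, giving the stated description.

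The main obstacle is the potential discrepancy arising from Galois cohomology: a priori $\bfX_{t_0}(\bR)$ could support several $\bfG_{t_0}(\bR)$-orbits beyond the one through $\bfx_0$, and one must rule out the possibility that some of these lie in the same connected component as $\bfx_0$. This is precisely what the clopen-partition argument in the previous paragraph accomplishes: different $\bfG_{t_0}(\bR)^\circ$-orbits sit in different connected components, so the unit-component analysis is unaffected.
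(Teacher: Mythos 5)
Your proposal is correct and follows the route the paper intends: apply the preceding proposition on submersions with sections to the real-point manifolds of $\bfG/\bfK$ and $\bfX$, invoke the open immersion from Theorem \ref{thm:G/K}, and identify the fiberwise unit components via the scheme-level isomorphisms at $t\neq 0$ (faithfully flat descent from $\bR\left[x\right]/(x^2-t_0)$, as in Lemma \ref{lem:relation}) and via Corollary \ref{cor:cartan_motion_group} and Proposition \ref{prop:finetale} at $t=0$. Your clopen-orbit argument correctly disposes of the Galois-cohomological multiplicity of $G_{t_0}(\bR)$-orbits, which is exactly the point the corollary is making.
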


	\subsection{Contraction families of partial flag schemes}\label{sec:flag}
	
	In this section, we study the case where $H=Q$ is a $\theta$-stable parabolic subgroup.
	
	We start with a quite more general setting. I.e., let $k$ be an arbitrary commutative ring with $1/2$, and $G$ be a reductive group scheme over $k$ in the sense of \cite[D\'efinition 2.7]{MR0228502}, i.e., a smooth affine group scheme over $k$ whose geometric fibers are connected reductive algebraic groups. We say that $G$ is of type (RA) if for every root $\alpha$ relative to a maximal torus $T$ of each geometric fiber of $G$, the positive generator of the ideal
	\[\{\chi(\alpha)\in\bZ:~\chi\in \Hom_{\bZ}(X^\ast(T),\bZ)\}\subset\bZ\]
	is a unit of $k$, where $X^\ast(T)$ is the character group of $T$ (\cite[D\'efinition 5.1.6]{MR0218363}).
	
	Let us give a quick review on parabolic subgroups and partial flag schemes from SGA 3. A smooth subgroup scheme $Q$ of $G$ is called a parabolic subgroup if it is so at every geometric fiber in the classical sense (\cite[D\'efinition 1.1]{MR0218364}). Let $\cP_G$ denote the total flag scheme, i.e., the moduli scheme of parabolic subgroups of $G$ (\cite[Section 3.2, Th\'eor\`eme 3.3]{MR0218364}). Let $\type G$ be the finite \'etale $k$-scheme of parabolic types of $G$ (see \cite[Section 3]{MR0228503} and \cite[Section 3.2, D\'efinition 3.4]{MR0218364}). We have a canonical morphism $\cP_G\to \type G$ (\cite[Section 3.2]{MR0218364}). One can recognize from the arguments of \cite{MR0218364} that this map exhibits the \'etale quotient map of $\cP_G$ by the conjugate action of $G$. For a parabolic subgroup $Q\subset G$, we call the image of $Q$ in $(\type G)(k)$ the (parabolic) type of $Q$ (\cite[Section 3.2, D\'efinition 3.4]{MR0218364}). For each $k$-point $x$ of $\type G$, the fiber of the morphism $\cP_G\to \type G$ at $x$ is called the partial flag scheme of type $x$, and it will be denoted by $\cP_{G,x}$ (cf.~\cite[Corollaire 3.6]{MR0218364}). 
	
	\begin{ex}[{\cite[Corollaire 3.6]{MR0218364}}]
		There is a canonical parabolic type denoted by $\emptyset$. This corresponds to the flag scheme, i.e., the moduli scheme of Borel subgroups (\cite[Corollaire 5.8.3 (i)]{MR0218363}).
	\end{ex}
	
	One can define a morphism
	\begin{equation}
		\cP_{G}\to\Gr(\fg)\label{eq:iota}
	\end{equation}
	by assigning the Lie algebras of parabolic subgroups (see \cite[Corollaire 4.11.8]{MR0212023}). According to \cite[Corollaire 5.3.3]{MR0218363}, \eqref{eq:iota} is a monomorphism if $G$ is of type (RA). Since $\cP_G$ and $\Gr(\fg)$ are projective (\cite[Corollaire 3.5]{MR0218364}), \eqref{eq:iota} is a closed immersion in this case (\cite[Th\'eor\`eme 5.5.3 (i) and Corollaire 5.4.3 (i)]{MR217084}, \cite[Corollaire 18.12.6]{MR0238860}). For a parabolic type $x$, we will denote the restriction of \eqref{eq:iota} to the closed subscheme $\cP_{G,x}$ by $\iota_x$. For a latter argument, let us record a general observation on the normalizer of the Lie algebra of a parabolic subgroup here:
	
	\begin{lem}\label{lem:normalizer_q}
		Suppose that $G$ is of type $\mathrm{(RA)}$. Then for a parabolic subgroup $Q$ of $G$, we have $N_G(\underline{\fq})=Q$.
	\end{lem}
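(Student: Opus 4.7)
The plan is to reduce the lemma to two standard inputs: first, that a parabolic subgroup of a reductive group of type (RA) is pinned down by its Lie algebra; and second, that the scheme-theoretic normalizer of a parabolic subgroup coincides with the parabolic subgroup itself. Once these are in place, the statement falls out by a functorial chase.

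The inclusion $Q \subseteq N_G(\underline{\fq})$ is immediate: for any commutative $k$-algebra $R$ and any $q \in Q(R)$, the inner automorphism by $q$ preserves $Q_R$, so its derivative $\Ad(q)$ preserves $\fq_R$. For the reverse inclusion, I would argue on the functor of points. Given $g \in N_G(\underline{\fq})(R)$ for some $R$, form the conjugate $gQ_R g^{-1}$. This is again a parabolic subgroup of $G_R$ (parabolicity being preserved under conjugation and arbitrary base change), and its Lie algebra is $\Ad(g)\fq_R = \fq_R$, equal to the Lie algebra of $Q_R$. Under the type (RA) hypothesis, the comparison map $\cP_G \to \Gr(\fg)$ recalled above is a monomorphism by \cite[Corollaire 5.3.3]{MR0218363}, so $gQ_Rg^{-1}$ and $Q_R$ agree as $R$-points of $\cP_G$, i.e., as closed subgroup schemes of $G_R$. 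Hence $g \in N_G(Q)(R)$.

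To conclude, I would invoke the scheme-theoretic self-normalizing property of parabolic subgroups, namely $N_G(Q) = Q$, which is the standard SGA 3 analog of the classical fact (see \cite[Expos\'e XXVI]{MR0218364}). Combining this with the previous step gives $N_G(\underline{\fq})(R) \subseteq Q(R)$ functorially in $R$, and therefore $N_G(\underline{\fq}) = Q$ as subgroup schemes of $G$.

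The main obstacle here is not computational but bibliographic: I must make sure both the closed-immersion property of $\cP_G \to \Gr(\fg)$ and the identity $N_G(Q) = Q$ are genuinely available at the scheme-theoretic level under the type (RA) assumption, rather than only over geometric fibers. Both statements are present in SGA 3, so the argument proceeds uniformly over the base without any case analysis or passage to residue fields.
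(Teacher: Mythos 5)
Your argument is correct and is precisely the paper's proof, which reads in its entirety: ``This follows by the monomorphicity of \eqref{eq:iota} and \cite[Proposition 1.2]{MR0218364}.'' You have simply spelled out how those two inputs (the monomorphism $\cP_G\to\Gr(\fg)$ under the (RA) hypothesis and the scheme-theoretic self-normalizing property $N_G(Q)=Q$ from SGA~3, Expos\'e~XXVI) combine on the functor of points.
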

	
	\begin{proof}
		This follows by the monomorphicity of \eqref{eq:iota} and \cite[Proposition 1.2]{MR0218364}.
	\end{proof}
	
	Take an involution of $\theta$. Then $\bfG_t$ is a reductive group scheme over $k\left[t^{\pm 1}\right]$ by Lemma \ref{lem:relation} and \cite[the sentence below D\'efinition 2.7]{MR0228502}. Let us note a general remark for finding elements of $ (\type \bfG_t)(k\left[t^{\pm 1}\right])$:
	
	\begin{lem}
		Let $\bar{}$ be the Galois involution on $\type G\otimes_k k\left[\sqrt{t}^{\pm 1}\right]$ with respect to the quadratic Galois extension $k\left[\sqrt{t}^{\pm 1}\right]\supset k\left[t^{\pm 1}\right]$. We denote the involution on $\type G$ induced from $\theta$ by the same symbol. Then we have a canonical bijection
		\[(\type \bfG_t)(k\left[t^{\pm 1}\right])
		\cong\left\{x\in (\type G)\left(k\left[\sqrt{t}^{\pm 1}\right]\right):~\theta(\bar{x})=x\right\}.\]
	\end{lem}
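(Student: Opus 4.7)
The plan is to exhibit both sides as fixed-point sets under the Galois group of $k\left[\sqrt{t}^{\pm 1}\right]/k\left[t^{\pm 1}\right]$, using that $\type(-)$ commutes with base change of reductive group schemes.

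First I would invoke the compatibility of the construction of $\type$ with arbitrary base change (see \cite[Section 3]{MR0228503}, \cite[Section 3.2]{MR0218364}) to obtain a natural identification
\[
\type G_t\otimes_{k\left[t^{\pm 1}\right]}k\left[\sqrt{t}^{\pm 1}\right]
\;\cong\;
\type\!\left(G_t\otimes_{k\left[t^{\pm 1}\right]}k\left[\sqrt{t}^{\pm 1}\right]\right)
\;\cong\;
(\type G)\otimes_k k\left[\sqrt{t}^{\pm 1}\right],
\]
where the second isomorphism uses Lemma \ref{lem:relation} (2) applied to the Hopf algebra $A$. Taking $k\left[\sqrt{t}^{\pm 1}\right]$-points then yields a bijection
\[
(\type G_t)\!\left(k\left[\sqrt{t}^{\pm 1}\right]\right)\;\cong\;(\type G)\!\left(k\left[\sqrt{t}^{\pm 1}\right]\right).
\]

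Next, since $k\left[t^{\pm 1}\right]\subset k\left[\sqrt{t}^{\pm 1}\right]$ is a (quadratic) Galois extension and $\type G_t$ is an affine $k\left[t^{\pm 1}\right]$-scheme (being finite \'etale), standard Galois descent gives
\[
(\type G_t)\!\left(k\left[t^{\pm 1}\right]\right)
\;=\;
\left\{x\in(\type G_t)\!\left(k\left[\sqrt{t}^{\pm 1}\right]\right):~\tau(x)=x\right\},
\]
where $\tau$ is the Galois involution acting by post-composition with $\sqrt{t}\mapsto-\sqrt{t}$. The task is therefore to identify $\tau$ with $\theta\circ\bar{}$ under the isomorphism of the first paragraph.

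The key point, which I expect to be the main obstacle, is transporting the descent datum correctly. Concretely, the isomorphism $A_t\otimes_{k\left[t^{\pm 1}\right]}k\left[\sqrt{t}^{\pm 1}\right]\cong A\left[\sqrt{t}^{\pm 1}\right]$ of Lemma \ref{lem:relation} (2) sends $a\otimes s\mapsto as$; tensoring the Galois action through the right-hand factor thus corresponds on $A\left[\sqrt{t}^{\pm 1}\right]$ to the involution that restricts to the identity on $A_t$ and to $\sqrt{t}\mapsto-\sqrt{t}$ on $k\left[\sqrt{t}^{\pm 1}\right]$. By Proposition \ref{prop:fiber_descent} (1), this is precisely the involution $\sum_i a_i\sqrt{t}^i\mapsto\sum_i\theta(a_i)(-\sqrt{t})^i=\theta\circ\bar{}$. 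Applying $\type$ (functorially) transfers $\theta\circ\bar{}$ to the corresponding involution on $(\type G)\otimes_k k\left[\sqrt{t}^{\pm 1}\right]$, whose induced action on $k\left[\sqrt{t}^{\pm 1}\right]$-points is precisely $x\mapsto\theta(\bar{x})$ in the notation of the statement. Combining this identification with the Galois-fixed-point description from the previous paragraph yields the desired bijection.
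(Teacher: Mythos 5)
Your argument is correct and follows essentially the same route as the paper: the paper's proof simply states that the Galois involution on $(\type G)\left(k\left[\sqrt{t}^{\pm 1}\right]\right)$ corresponding to the $k\left[t^{\pm 1}\right]$-form $G_t$ is $x\mapsto\theta(\bar{x})$ and then invokes Galois descent, which is exactly what you establish via Proposition \ref{prop:fiber_descent} (1) and the base-change compatibility of $\type$. Your version just makes the transport of the descent datum explicit.
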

	
	\begin{proof}
		The Galois involution on $(\type G)\left(k\left[\sqrt{t}^{\pm 1}\right]\right)$ corresponding to the $k\left[t^{\pm 1}\right]$-form $\bfG_t$ of $G\otimes_k k\left[\sqrt{t}^{\pm 1}\right]$ is given by $x\mapsto \theta(\bar{x})$. The assertion then follows from generalities on Galois descent.
	\end{proof}

	\begin{ex}\label{ex:theta_descent}
		Let $x\in (\type G)(k)$ such that $\theta(x)=x$. Then $x\otimes_k k\left[\sqrt{t}^{\pm 1}\right]$ descends to an element of $(\type \bfG_t)(k\left[t^{\pm 1}\right])$.
	\end{ex}
	
	\begin{ex}\label{ex:theta_descent_Q}
		Let $Q$ be a $\theta$-stable parabolic subgroup of $G$. Then the parabolic type of $Q$ attaches an element of $(\type \bfG_t)(k\left[t^{\pm 1}\right])$ by Example \ref{ex:theta_descent}. This coincides with the type of $\bfQ_t$.
	\end{ex}
	
	\begin{ex}
		Assume $k=F$ is a field. Then the minimal parabolic subgroups of $G$ determine a unique element of $(\type \bfG_t)(k\left[t^{\pm 1}\right])$ by \cite[20.9 Theorem]{MR1102012}.
	\end{ex}
	
	Henceforth we assume that $G$ is of type (RA). Then $\bfG_t$ is also of type (RA) by Lemma \ref{lem:relation} and \cite[Remarques 5.1.7 c)]{MR0218363}.
	
	Take $x\in (\type \bfG_t)(k\left[t^{\pm 1}\right])$. Let $j_0$ denote the canonical open immersion
	\[\Spec k\left[t^{\pm 1}\right]\hookrightarrow \Spec k\left[t\right]\]
	of affine $k$-schemes. Compose the closed immersion $\iota_x$ with the base change \[j_{\Gr(\bfg)}:\Gr(\bfg_t)\hookrightarrow \Gr(\bfg)\]
	of $j_0$ to get an affine immersion $\cP_{\bfG_t,x}\hookrightarrow \Gr(\bfg)$.
	
	\begin{defn}
		We call the scheme-theoretic closure of $\cP_{\bfG_t,x}$ in $\Gr(\bfg)$ the partial flag scheme of $\bfG$ of type $x$, and denote it by $\cP_{\bfG,x}$. If $x=\emptyset$ then we call it the flag scheme of $\bfG$, and refer to it as $\cB_{\bfG}$.
	\end{defn}

	\begin{rem}
		A key idea to prove that the total flag scheme of a reductive group scheme without the (RA) hypothesis was to pass to the adjoint group in the sense of \cite[D\'efinition 4.3.6]{MR0218363} (see \cite[Proof of Th\'eor\`eme 5.8.1]{MR0218363}). Therefore passing to the adjoint group of $G$ is another possible definition of partial flag schemes without the (RA) hypothesis. To be more precise, let $G'$ be the quotient of $G$ by its center $Z$ (see \cite[Corollaire 4.1.7 and Proposition 4.3.5 (ii)]{MR0218363}). Then $Z$ is $\theta$-stable in $G$, and therefore $\theta$ descends to an involution of $G'$. We note that $\bfZ_t$ is of finite presentation over $k\left[t^{\pm 1}\right]$ by Lemma \ref{lem:relation} (3), \cite[Corollaire 4.1.7]{MR0218363}, and \cite[Proposition 2.1 b)]{MR0215854}. Similarly, the center of $\bfG_t$ is flat of finite presentation over $k\left[t^{\pm 1}\right]$ by\cite[Corollaire 4.1.7]{MR0218363} and \cite[Proposition 2.1 a)]{MR0215854}. With these in mind, one can prove in a similar way to Theorem \ref{thm:G/K} that $\bfG'_t$ is canonically isomorphic to the adjoint group of $\bfG_t$. We identify the total flag scheme and the scheme of parabolic types of $\bfG_t$ with those of $\bfG'_t$ respectively. Then imbed $\cP_{\bfG_t}$ into $\Gr(\bfg'_t)$. Finally, take the schematic closure of $\cP_{\bfG_t,x}$ in $\Gr(\bfg')$. One can see that similar results to the below hold under this version of the definition (the statements need minor modifications). We also note that there is no canonical map $\Gr(\bfg')\to\Gr(\bfg)$ even if $G$ is of type (RA) since $\fg'$ is not a quotient of $\fg$ in general. For example, think of $G=\SL_{2n+1}$ with $n\geq 1$, the trivial involution, and $k=\bZ\left[1/2\right]$. Therefore it is difficult to compare these two definitions in general. On the other hand, the definitions coincide if $G$ is of type (RA) and $Z$ is smooth by \cite[Proposition 8.17 (2)]{MR4225278} and Proposition \ref{prop:detect_image} since $\fg'\cong\fg/\fz$ in this case.
	\end{rem}

	\begin{rem}
		One can see that $\cP_{\bfG_t,x}$ is scheme-theoretically dense in $\cP_{\bfG,x}$ by \cite[Lemma 01RG]{stacks}.
	\end{rem}
	
	\begin{ex}\label{ex:Q}
		Let $Q$ be a $\theta$-stable parabolic subgroup of $G$, and $x$ be the attached parabolic type of $\bfG_t$. In this case, we write $\cP_{\bfG,x}=\overline{\bfG_t/\bfQ_t}$. We note that $\cP_{\bfG_t,x}$ is naturally identified with $\bfG_t/\bfQ_t$ by Example \ref{ex:theta_descent_Q} and \cite[Corollaire 3.6]{MR0218364}.
	\end{ex}

	\begin{ex}\label{ex:SO(2,1)}
		Put $G=\SL_2$ and $\theta=((-)^T)^{-1}$. Regard
		\[\bfg=\fk\otimes_k k\left[t\right]\oplus \fp\otimes_k k\left[t\right]\sqrt{t}\subset\fg\otimes_k k\left[\sqrt{t}\right]\]
		as in the proof of Proposition \ref{prop:liealg}. Let $\xi_i$ ($i\in\{1,2,3\}$) be as in Example \ref{ex:semisimpleorbit}. We observe that $\xi_1$ and $\xi_2$ are (resp.~$\xi_3$ is) a basis of $\fp$ (resp.~$\fk$) under the standard identification $\fg\cong\fsl_2$. Therefore $\sqrt{t}\xi_1$, $\sqrt{t}\xi_2$, $\xi_3$ form a free basis of $\bfg$ as a $k\left[t\right]$-module. Hence the maps
		\[(\Gr_2 \bfg)(R)\ni M\mapsto (\bfg\otimes_{k\left[t\right]} R/M,\sqrt{t}\xi_1,\sqrt{t}\xi_2,\xi_3)\in\bP^2_{k\left[t\right]}(R)\]
		for commutative $k\left[t\right]$-algebras $R$ determine an isomorphism $\Gr_2 \bfg\cong\bP^2_{k\left[t\right]}$. Here we would like to prove that the closed immersion
		\[\cB_{\bfG}\hookrightarrow\Gr_2\bfg\cong\bP^2_{k\left[t\right]}\]
		is an isomorphism onto the closed subscheme
		\[\bfB\coloneqq \Proj k\left[t,x,y,z\right]/(x^2+y^2-tz^2)
		\subset \Proj k\left[t,x,y,z\right]=\bP^2_{k\left[t\right]}.
		\]
		
		Following the definition of $\cB_{\bfG}$, we start with proving that the map
		\[\iota_{\emptyset}:\cB_{\bfG_t}\to \Gr_2\bfg_t\]
		is an isomorphism onto $\Proj k\left[t^{\pm 1},x,y,z\right]/(x^2+y^2-tz^2)$.
		In view of the faithfully flat descent, we may work over $k\left[\sqrt{t}^{\pm 1}\right]$ to prove that
		\[\cB_{G}\otimes_k k\left[\sqrt{t}^{\pm 1}\right]
		\cong\cB_{\bfG_t}\otimes_{k\left[t^{\pm 1}\right]}
		k\left[\sqrt{t}^{\pm 1}\right]
		\hookrightarrow
		\bP^2_{k\left[\sqrt{t}^{\pm 1}\right]}\]
		is an isomorphism onto $\Proj k\left[\sqrt{t}^{\pm 1},x,y,z\right]/(x^2+y^2-tz^2)$ (see Lemma \ref{lem:relation} (3) for the first isomorphism). To compute this map, let $B_{\std}\subset\SL_2$ be the Borel subgroup consisting of upper triangular matrices. We know $\cB_G\cong G/B_{\std}$ (\cite[Corollaire 5.8.3 (iii)]{MR0218363}). It is easy to show that the canonical action of $G$ on $\bP^1_k$ at $B_{\std}$ gives rise to an isomorphism $G/B_{\std}\cong\bP^1_k$ as the orbit map. Write
		\[\bP^1_{k\left[\sqrt{t}^{\pm 1}\right]}=\Proj k\left[\sqrt{t}^{\pm 1},v_1,v_2\right].\]
		Then unwinding the definitions, we can see that the total map
		\begin{equation}
			\bP^1_{k\left[\sqrt{t}^{\pm 1}\right]}\cong
			\cB_{G}\otimes_k k\left[\sqrt{t}^{\pm 1}\right]
			\to
			\bP^2_{k\left[\sqrt{t}^{\pm 1}\right]}\label{eq:etale_localization}
		\end{equation}
		is given by
		\[(\cO_{\Proj k\left[\sqrt{t}^{\pm 1},v_1,v_2\right]}(2),\sqrt{t}(v^2_1-v^2_2),
		-2\sqrt{t} v_1 v_2,v^2_1+v^2_2)\]
		in terms of the moduli description in \cite[Corollary 13.33]{MR4225278}
		(compute the map on the principal affine open subspaces).
		We claim that it is an isomorphism onto the closed subscheme
		$\Proj k\left[\sqrt{t}^{\pm 1},x,y,z\right]/(x^2+y^2-tz^2)$. In fact, this map factors through this closed subscheme by \cite[Lemma 01NA]{stacks}. To construct the inverse of the resulting map
		\begin{equation}
			\bP^1_{k\left[\sqrt{t}^{\pm 1}\right]}\to \Proj k\left[\sqrt{t}^{\pm 1},x,y,z\right]/(x^2+y^2-tz^2),\label{eq:pluecker}
		\end{equation}
		observe that the morphisms
		\[\Spec k\left[\sqrt{t}^{\pm 1},u_i\right]\to \Proj k\left[\sqrt{t}^{\pm 1},x,y,z\right]/(x^2+y^2-tz^2)\]
		($i\in\{1,2\}$) defined by
		\begin{flalign*}
			&\left(k\left[\sqrt{t}^{\pm 1},u_1\right],
			\sqrt{t}(u^2_1-1),-2\sqrt{t}u_1,u^2_1+1\right),\\
			&\left(k\left[\sqrt{t}^{\pm 1},u_2\right],
			\sqrt{t}(1-u^2_2),-2\sqrt{t}u_2,1+u^2_2\right)
		\end{flalign*}
		form a Zariski covering of $\Proj k\left[\sqrt{t}^{\pm 1},x,y,z\right]/(x^2+y^2-tz^2)$. Their fiber product is canonically isomorphic to 
		\[\Spec k\left[\sqrt{t}^{\pm 1},u^{\pm 1}_1\right]
		\cong \left[\sqrt{t}^{\pm 1},u^{\pm 1}_2\right]\]
		with $u_1\mapsto u^{-1}_2$. We define maps to $\bP^1_{k\left[\sqrt{t}^{\pm 1}\right]}$ on these loci by
		\[\begin{array}{cc}
			\left(k\left[\sqrt{t}^{\pm 1},u_1\right],
			u_1,1\right),
			&\left(k\left[\sqrt{t}^{\pm 1},u_2\right],
			1,u_2\right).
		\end{array}\]
		One can easily check that they glue up to the inverse map of \eqref{eq:pluecker}.
		
		We complete the isomorphism $\cB_{\bfG}\cong\bfB$ by computing the scheme theoretic closure of $\cB_{\bfG_t}$ locally on the principal open subschemes attached to $x,y,z$ (recall that the scheme theoretic image is local in the target by definition or Proposition \ref{prop:bc_image}). In fact, one can prove that the intersections of the ideals of
		\[\begin{array}{ccc}
			k\left[t^{\pm 1},y,z\right],&k\left[t^{\pm 1},x,z\right],&k\left[t^{\pm 1},x,y\right]
		\end{array}\]
		generated by the dehomogenized polynomials
		\begin{equation}
			1+y^2-tz^2,~x^2+1-tz^2,~x^2+y^2-t\label{eq:generators}
		\end{equation}
		with
		\begin{equation}
			k\left[t,y,z\right],~k\left[t^{\pm 1},x,z\right],~k\left[t^{\pm 1},x,y\right]\label{eq:poly_rings}
		\end{equation}
		are the ideals of \eqref{eq:poly_rings} generated by \eqref{eq:generators} respectively by using the regularity of the elements
		\[\begin{array}{ccc}
			1+y^2\in k\left[y,z\right],&x^2+1\in k\left[x,z\right],&x^2+y^2\in k\left[x,y\right].
		\end{array}\]
		
		As a consequence, one finds that the flag scheme of the current $\bfG$ is not smooth at $t=0$. In fact, we may add $\sqrt{-1}$ to the base ring $k$. Then the fiber at $t=0$ is not smooth at the intersection of the two projective lines (recall the description of the fiber of $\bfB$ at $t=0$ in Example \ref{ex:sl_2}).
		
		Finally, we remark that similar arguments prove $\cB_{\bfG}\cong\bfB$ for \[G=\PGL_2\cong\SO(2,1).\]
	\end{ex}
	
	\begin{rem}\label{rem:relation_with_gluing}
		Put $G=\SL_2$. Assume that $k$ is a commutative $\bZ\left[1/2,\sqrt{-1}\right]$-algebra. Then the base change of the maps $\Spec k\left[t,v_i\right]\to\bfB$ for $i\in\{1,2\}$ in Example \ref{ex:sl_2} to $k\left[\sqrt{t}^{\pm 1}\right]$ are recovered from the map \eqref{eq:etale_localization} by restricting it to the affine open subspaces defined by
		\begin{flalign*}
			\left(k\left[\sqrt{t}^{\pm 1},v_1\right],1+\sqrt{-1}\sqrt{t}v_1,
			\sqrt{-1}+\sqrt{t}v_1\right),\\
			\left(k\left[\sqrt{t}^{\pm 1},v_2\right],\sqrt{t}v_2+\sqrt{-1},
			\sqrt{t}\sqrt{-1}v_2+1\right).
		\end{flalign*}
		For example, if $i=1$, it is deduced as
		\begin{flalign*}
			&\left(k\left[\sqrt{t}^{\pm 1},v_1\right],1+\sqrt{-1}\sqrt{t}v_1,
			\sqrt{-1}+\sqrt{t}v_1\right)\\
			&\mapsto (k\left[\sqrt{t}^{\pm 1},v_1\right],\sqrt{t}((1+\sqrt{-1}\sqrt{t}v_1)^2-(\sqrt{-1}+\sqrt{t}v_1)^2),\\
			&-2\sqrt{t}(1+\sqrt{-1}\sqrt{t}v_1)(\sqrt{-1}+\sqrt{t}v_1),
			(1+\sqrt{-1}\sqrt{t}v_1)^2+(\sqrt{-1}+\sqrt{t}v_1)^2)\\
			&=\left(k\left[\sqrt{t}^{\pm 1},v_1\right],	\sqrt{t}(2-2tv^2_1),
			-2\sqrt{t}\sqrt{-1}(1+tv^2_1),
			4\sqrt{t}\sqrt{-1}v_1\right)\\
			&=\left(k\left[\sqrt{t}^{\pm 1},v_1\right],	1-tv^2_1,-\sqrt{-1}(1+tv^2_1),
			2\sqrt{-1}v_1\right).
		\end{flalign*}
		We note that the affine open subspaces above arise from the $\theta$-stable affine subspaces of $\bP^1_k$ and the correspondences $\frac{w_i}{\sqrt{t}}\mapsto v_i$. 
		
		We get the morphisms over $k\left[t^{\pm 1}\right]$ by the Galois descent. Then the maps from the affine lines to $\bfB$ over $k\left[t\right]$ are obtained as their unique extension by \cite[Lemma 01RH]{stacks}.
	\end{rem}
	
	\begin{rem}
		The fiber of the isomorphism $\cB_{\bfG}\cong\bfB$ above at $t=1$ gives rise to an isomorphism
		\[\bP^1_k\cong\cB_G\cong\Proj k\left[x,y,z\right]/(x^2+y^2-z^2),\]
		where the first isomorphism was explained in Example \ref{ex:SO(2,1)}. It is easy to show that the involution on $\bP^1_k$ induced from $\theta$ and these isomorphisms is given by
		\[(L,a,b)\mapsto (L,b,-a),\]
		which appeared in Example \ref{ex:sl_2}. We also note that the involution induced from $\theta$ and the isomorphism
		$\Gr_2\fg\cong \bP^2_k$ determined by the basis $(\xi_i)_{1\leq i\leq 3}$
		is given by
		\[(L,a,b,c)\mapsto (L,a,b,-c)\]
		since $\theta(\xi_i)=-\xi_i$ for $i\in\{1,2\}$ and $\theta(\xi_3)=\xi_3$. We remark if necessary that
		\[(L,-a,-b,c)=(L,a,b,-c)\]
		as points of $\bP^2_k$ by definitions.
	\end{rem}
	
	Let us record two basic observations on $\cP_{\bfG,x}$.
	
	\begin{prop}\label{prop:restriction}
		We have a canonical isomorphism
		$\cP_{\bfG,x}\otimes_{k\left[t\right]} k\left[t^{\pm 1}\right]\cong\cP_{\bfG_t,x}$.
	\end{prop}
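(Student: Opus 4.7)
The plan is to invoke the compatibility of scheme-theoretic closure with flat base change, as recorded in Proposition~\ref{prop:bc_image}. First I would note that the open immersion $j_0\colon \Spec k\left[t^{\pm 1}\right]\hookrightarrow \Spec k\left[t\right]$ is flat, and its base change to $\Gr(\bfg)$ is precisely the open immersion $j_{\Gr(\bfg)}\colon \Gr(\fg_t)\hookrightarrow \Gr(\bfg)$ (since the Grassmannian formation commutes with base change). Therefore the base change $\cP_{\bfG,x}\otimes_{k\left[t\right]} k\left[t^{\pm 1}\right]$ is identified with the pullback of $\cP_{\bfG,x}$ along $j_{\Gr(\bfg)}$, which is a closed subscheme of $\Gr(\fg_t)$.

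Next, by the flat base change property of the scheme-theoretic image (Proposition~\ref{prop:bc_image}), this pullback coincides with the scheme-theoretic image in $\Gr(\fg_t)$ of the base change of the map $\cP_{G_t,x}\to\Gr(\bfg)$ along $j_{\Gr(\bfg)}$. But that base change is just $\iota_x\colon \cP_{G_t,x}\hookrightarrow \Gr(\fg_t)$ itself, because $\cP_{G_t,x}$ factors through the open subscheme $\Gr(\fg_t)\subset\Gr(\bfg)$ by construction. Finally, since $G_t$ is reductive and of type $\mathrm{(RA)}$ over $k\left[t^{\pm 1}\right]$ (by Lemma~\ref{lem:relation} together with \cite[Remarques 5.1.7 c)]{MR0218363}), the morphism $\iota_x$ is a closed immersion, so its scheme-theoretic image in $\Gr(\fg_t)$ is $\cP_{G_t,x}$ itself, giving the desired isomorphism.

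No step looks to be a serious obstacle: the argument is essentially a bookkeeping exercise combining the definition of $\cP_{\bfG,x}$ with the general behavior of scheme-theoretic closure under flat base change. The only point requiring care is to confirm that the hypotheses of Proposition~\ref{prop:bc_image} apply to the situation at hand, which amounts to verifying the quasi-compactness of $\cP_{G_t,x}\to \Gr(\bfg)$; this follows from the projectivity of $\cP_{G_t,x}$ over $k\left[t^{\pm 1}\right]$ (a consequence of \cite[Corollaire 3.5]{MR0218364}).
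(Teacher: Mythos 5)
Your argument is correct and is essentially the paper's own proof: the paper likewise deduces the claim from Proposition \ref{prop:bc_image} applied to the flat open immersion $j_0$ together with the fact that $\iota_x$ is a closed immersion (Example \ref{ex:closed_immersion}), so that the base-changed scheme-theoretic image is that of $\iota_x$, namely $\cP_{G_t,x}$ itself. Your additional remarks on quasi-compactness and the type (RA) hypothesis are just the details the paper leaves implicit.
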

	
	\begin{proof}
		Since $\iota_x$ is a closed immersion, the assertion follows from Proposition \ref{prop:bc_image} and Example \ref{ex:closed_immersion}.
	\end{proof}
	
	\begin{prop}\label{prop:action}
		The partial flag scheme $\cP_{\bfG,x}$ is a $\bfG$-invariant closed subscheme of $\Gr(\bfg)$.
	\end{prop}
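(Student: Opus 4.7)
The goal is to upgrade the $G_t$-equivariance of the partial flag scheme over $\Spec k\left[t^{\pm 1}\right]$ to $\bfG$-equivariance on its scheme-theoretic closure in $\Gr(\bfg)$. Let $a\colon \bfG\times_{\Spec k\left[t\right]} \Gr(\bfg)\to \Gr(\bfg)$ be the action map induced by the adjoint action of $\bfG$ on $\bfg$; the task is to show that $a$ sends the closed subscheme $\bfG\times_{\Spec k\left[t\right]} \cP_{\bfG,x}$ into $\cP_{\bfG,x}$.

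First I would verify the invariance over the open base $\Spec k\left[t^{\pm 1}\right]$. The adjoint action of $G_t$ on $\fg_t$ permutes the parabolic subalgebras of a fixed parabolic type, so by construction of $\iota_x$ the restriction of $a$ to $G_t\times_{\Spec k\left[t^{\pm 1}\right]} \cP_{G_t,x}$ factors through $\cP_{G_t,x}$, and a fortiori through the closed subscheme $\cP_{\bfG,x}\subset \Gr(\bfg)$.

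Next I would invoke the compatibility of scheme-theoretic closure with flat base change, which I expect to be recorded in Appendix \ref{sec:sch_img}. Because $\bfG$ is smooth (hence flat) over $k\left[t\right]$ by Theorem \ref{thm:smooth}, this compatibility applied to the base change along $\bfG\to \Spec k\left[t\right]$ identifies $\bfG\times_{\Spec k\left[t\right]} \cP_{\bfG,x}$ with the scheme-theoretic closure of
\[\bfG\times_{\Spec k\left[t\right]} \cP_{G_t,x}=G_t\times_{\Spec k\left[t^{\pm 1}\right]} \cP_{G_t,x}\]
inside $\bfG\times_{\Spec k\left[t\right]} \Gr(\bfg)$. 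By the previous paragraph, the closed subscheme $a^{-1}(\cP_{\bfG,x})\subset\bfG\times_{\Spec k\left[t\right]} \Gr(\bfg)$ contains this open subscheme, and hence, by the universal property of scheme-theoretic closure, contains all of $\bfG\times_{\Spec k\left[t\right]} \cP_{\bfG,x}$. This is precisely the desired $\bfG$-invariance.

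The only nontrivial ingredient is the flat base change stability of scheme-theoretic closure invoked in the last paragraph; I expect it to be handled by the appendix, and anticipate no further obstacle, since the remainder of the argument is purely formal via the universal property.
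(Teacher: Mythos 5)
Your proposal is correct and follows essentially the same route as the paper: both arguments reduce the claim to the identification, via flat base change of scheme-theoretic images (Proposition \ref{prop:bc_image}, using that $\bfG$ is flat over $k\left[t\right]$ by Theorem \ref{thm:smooth}), of $\bfG\times_{\Spec k\left[t\right]}\cP_{\bfG,x}$ with the scheme-theoretic closure of $G_t\times_{\Spec k\left[t^{\pm 1}\right]}\cP_{G_t,x}$ in $\bfG\times_{\Spec k\left[t\right]}\Gr(\bfg)$, and then conclude invariance from the $G_t$-equivariance over $\Spec k\left[t^{\pm 1}\right]$. The only cosmetic difference is that you finish with the minimality (universal property) of the scheme-theoretic closure applied to the closed subscheme $a^{-1}(\cP_{\bfG,x})$, whereas the paper cites the functoriality of scheme-theoretic images for the commutative square of action maps; these are the same argument.
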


	\begin{proof}
		Let $j_{\bfG}$ denote the canonical immersion $\bfG_t\hookrightarrow \bfG$. Then we have an affine immersion
		\begin{equation}
			j_{\bfG}\times_{\Spec k\left[t\right]} (j_{\Gr(\bfg)}\circ \iota_x).
			\label{eq:composite_immersion}
		\end{equation}
		In particular, \eqref{eq:composite_immersion} is quasi-compact.
		Indeed, the morphisms $j_{\bfG}$ and $j_{\Gr(\bfg)}$ are affine since they are obtained by base changes of $j_0$. Since $\iota_x$ is a closed immersion, $\iota_x$ is affine.
		
		Consider the commutative diagram
		\[\begin{tikzcd}
			\bfG_t\times_{\Spec k\left[t^{\pm 1}\right]} \cP_{\bfG_t,x}\ar[d]\ar[r, equal]
			&\bfG_t\times_{\Spec k\left[t\right]} \cP_{\bfG_t,x}\ar[r, "\eqref{eq:composite_immersion}"]
			&\bfG\times_{\Spec k\left[t\right]} \Gr(\bfg)\ar[d]\\
			\cP_{\bfG_t,x}\ar[rr, hook, "{j_{\Gr(\bfg)}\circ \iota_x}"]
			&&\Gr(\bfg),
		\end{tikzcd}\]
		where the vertical arrows are the action maps. In view of the functoriality of scheme-theoretic images in \cite[Lemma 01R9]{stacks} and the definition of $\cP_{\bfG,x}$, the proof will be completed by showing that the scheme-theoretic image of \eqref{eq:composite_immersion} is $\bfG\times_{\Spec k\left[t\right]}\cP_{\bfG,x}$.
		
		The affine immersion \eqref{eq:composite_immersion} has a factorization
		\[\begin{split}
			\bfG_t\times_{\Spec k\left[t\right]} \cP_{\bfG_t,x}
			&\xrightarrow{j_{\bfG}\times_{\Spec k\left[t\right]} \cP_{\bfG_t,x}}
			\bfG\times_{\Spec k\left[t\right]} \cP_{\bfG_t,x}\\
			&\xrightarrow{\bfG\times_{\Spec k\left[t\right]} (j_{\Gr(\bfg)}\circ \iota_x)}
			\bfG\times_{\Spec k\left[t\right]} \Gr(\bfg).
		\end{split}\]
		The first map is obtained by the base change of $j_0$. In virtue of Example \ref{ex:open_dense} and Propositions \ref{prop:bc_image}, \ref{prop:detect_image}, we may compute the scheme-theoretic image of \eqref{eq:composite_immersion} as that of $\bfG\times_{\Spec k\left[t\right]} (j_{\Gr(\bfg)}\circ \iota_x)$. The assertion now follows from Proposition \ref{prop:bc_image} and the definition of $\cP_{\bfG,x}$.
	\end{proof}
	
	We next turn into another candidate for a contraction analog of partial flag schemes. Let $Q$ be a $\theta$-stable parabolic subgroup of $G$.

	\begin{prop}\label{prop:point}
		The partial flag scheme $\overline{\bfG_t/\bfQ_t}$ contains $\bfq$ as a $k\left[t\right]$-point.
	\end{prop}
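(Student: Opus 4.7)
The plan is to verify two claims: (a) the $k\left[t\right]$-module $\bfq \subset \bfg$ defines a $k\left[t\right]$-point $[\bfq] \in \Gr(\bfg)(k\left[t\right])$, and (b) this point factors through $\overline{G_t/Q_t}$.

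First I would establish (a). The parabolic subgroup $Q \subset G$ is smooth by definition, so the closed immersion $Q \hookrightarrow G$ of smooth $k$-schemes has locally free normal module at the identity section; in particular $\fg/\fq$ is locally free as a $k$-module, and $\fq \subset \fg$ is a direct summand. By Proposition \ref{prop:liealg} applied to $\bfG$ and $\bfQ$, the $k\left[t\right]$-modules $\bfg$ and $\bfq$ are identified with $\fg \otimes_k k\left[t\right]$ and $\fq \otimes_k k\left[t\right]$ respectively, so base-changing the splitting of $\fq \hookrightarrow \fg$ yields that $\bfq \subset \bfg$ is a direct summand with locally free quotient. This produces the desired $k\left[t\right]$-point $[\bfq]$ of $\Gr(\bfg)$.

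Next I would treat (b) by examining the restriction of $[\bfq]$ over $\Spec k\left[t^{\pm 1}\right]$. By Lemma \ref{lem:relation} and Proposition \ref{prop:flatbc_hopf}, the inclusion $\bfq \otimes_{k\left[t\right]} k\left[t^{\pm 1}\right] \hookrightarrow \bfg \otimes_{k\left[t\right]} k\left[t^{\pm 1}\right]$ is identified with $\fq_t \hookrightarrow \fg_t$. Since $Q_t$ is a parabolic subgroup of $G_t$ of the type $x$ in the sense of Example \ref{ex:theta_descent_Q}, the point $[\fq_t]$ lies in $\cP_{G_t,x}$, which is identified with $G_t/Q_t$ (Example \ref{ex:Q}) inside $\Gr(\fg_t)$. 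Therefore the restriction of $[\bfq]$ to $\Spec k\left[t^{\pm 1}\right]$ factors through $G_t/Q_t$.

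Finally, I would conclude by applying the universal property of scheme-theoretic closure. Let $Z = \overline{G_t/Q_t} \times_{\Gr(\bfg)} \Spec k\left[t\right]$, a closed subscheme of $\Spec k\left[t\right]$ cut out by an ideal $I \subset k\left[t\right]$. By the previous step, the pullback of $Z$ to $\Spec k\left[t^{\pm 1}\right]$ is all of $\Spec k\left[t^{\pm 1}\right]$, i.e., $I \cdot k\left[t^{\pm 1}\right] = 0$. Since $k\left[t\right]$ has no $t$-torsion, the localization map $I \to I \otimes_{k\left[t\right]} k\left[t^{\pm 1}\right]$ is injective, forcing $I = 0$ and $Z = \Spec k\left[t\right]$. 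Hence $[\bfq]$ factors through $\overline{G_t/Q_t}$. The most delicate point is the identification of $\bfq \otimes_{k\left[t\right]} k\left[t^{\pm 1}\right]$ with $\fq_t$ inside $\fg_t$, which needs the compatibility of the Lie-algebra description in Proposition \ref{prop:liealg} both with the closed subgroup inclusion $\bfQ \hookrightarrow \bfG$ and with flat base change; both hold formally by functoriality of our constructions.
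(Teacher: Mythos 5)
Your proof is correct and follows essentially the same route as the paper: restrict the point $\bfq$ over $\Spec k\left[t^{\pm 1}\right]$, observe that it lands in $G_t/Q_t\subset\overline{G_t/Q_t}$, and use the scheme-theoretic density of $\Spec k\left[t^{\pm 1}\right]$ in $\Spec k\left[t\right]$ to conclude. The only cosmetic difference is that the paper invokes the universal property of the scheme-theoretic closure (via Proposition \ref{prop:detect_image} and \cite[Chapter 2, Exercise 3.17 (d)]{MR1917232}), whereas you unwind the same density argument by pulling back the closed subscheme and checking directly that its defining ideal $I\subset k\left[t\right]$ vanishes.
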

	
	\begin{proof}
		Recall that we have a canonical factorization
		\[\bfG_t/\bfQ_t\overset{j}{\hookrightarrow} \overline{\bfG_t/\bfQ_t}\overset{i}{\hookrightarrow}\Gr(\bfg)\]
		by \cite[Chapter 2, Exercise 3.17 (d)]{MR1917232}. Consider the commutative diagram
		\[\begin{tikzcd}
			\Spec k\left[t^{\pm 1}\right]\ar[rrd, hook, "j_0"']\ar[r]\ar[rrr, bend left, "\bfq_t"]
			&\bfG_t/\bfQ_t\ar[r, hook, "j"]&
			\overline{\bfG_t/\bfQ_t}\ar[r, hook, "i"]
			&\Gr(\bfg)\\
			&&\Spec k\left[t\right],\ar[u, dashed]\ar[ru, "\bfq"']
		\end{tikzcd}\]
		where the left horizontal arrow is given by the base point. Then the section of $\Gr(\bfg)$ attached to $\bfq$ is a closed immersion since $\Gr(\bfg)$ is separated over $k\left[t\right]$. Therefore its restriction to $\Spec k\left[t^{\pm 1}\right]$ is quasi-compact. Moreover, the $k\left[t\right]$-point $\fq$ of $\Gr(\bfg)$ exhibits $\Spec k\left[t\right]$ as the scheme-theoretic closure of $\Spec k\left[t^{\pm 1}\right]$ by Example \ref{ex:open_dense} and Proposition \ref{prop:detect_image}. The dotted arrow now exists by \cite[Chapter 2, Exercise 3.17 (d)]{MR1917232}. This shows the assertion.
	\end{proof}
	
	As we briefly explained in the introduction, we wish to compare the $\bfG^\circ$-orbit attached to $\bfq$ with $\overline{\bfG_t/\bfQ_t}$. Before we state the main result of this section, let us note general results on structures of $\bfG^\circ$ and $\bfQ^\circ$:
	
	\begin{lem}\label{lem:unit_component}
		\begin{enumerate}
			\renewcommand{\labelenumi}{(\arabic{enumi})}
			\item The group scheme $K^\circ$ is reductive.
			\item If $G$ is simply connected, i.e., the geometric fibers of $G$ are so in the classical sense, then $K^\circ =K$.
			\item The closed subgroup scheme $Q\cap K^\circ\subset K^\circ$ is parabolic. In particular, $Q\cap K^\circ$ has connected fibers.
			\item We have $(Q\cap K)^\circ =Q\cap K^\circ$. 
			\item The group schemes $\bfG^\circ$ and $\bfQ^\circ$ are affine.
			\item If $G$ is simply connected then $\bfG^\circ=\bfG$ and $\bfQ^\circ=\bfQ$.
		\end{enumerate} 
	\end{lem}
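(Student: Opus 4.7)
The plan is to reduce parts (1)--(3) to geometric fibers, where classical results about fixed-point subgroups of involutions on reductive groups apply, and then to derive (4)--(6) in a relatively formal way from these. For (1), $K$ is smooth over $k$ by the argument in \cite[the proof of Lemma 3.1]{hayashilinebdl} (using $1/2\in k$), so $K^\circ$ is a smooth open subscheme of $K$ whose geometric fibers are the identity components of $G_{\bar{s}}^\theta$; each such fiber is connected reductive by the classical result cited in \cite{MR1066573}. Affineness of $K^\circ$ then reduces to showing it is closed in the affine scheme $K$, which follows once the fiberwise component scheme $K/K^\circ$ is known to be finite \'etale over $k$ in the reductive-with-involution setting. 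Part (2) is Steinberg's theorem on the connectedness of the fixed-point subgroup of a semisimple automorphism of a simply connected reductive group, applied at every geometric fiber. For (3) I again reduce to geometric fibers and invoke Vust's theorem: when $Q\subset G$ is a $\theta$-stable parabolic in a connected reductive group with involution, $Q\cap K^\circ$ is parabolic in $K^\circ$, and in particular has connected geometric fibers; parabolicity descends to $k$ because it is a fibral property.

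Part (4) is purely formal. The inclusion $(Q\cap K)^\circ \subseteq Q\cap K^\circ$ holds because the identity component at each geometric fiber lies inside $K^\circ$. Conversely, $Q\cap K^\circ$ is an open subscheme of $Q\cap K$ containing the unit section with connected geometric fibers by (3), hence contained in $(Q\cap K)^\circ$. For (5), I apply Proposition \ref{prop:affine} to $X=G$ and to $X=Q$ with $x_0$ the identity section: smoothness comes from Theorem \ref{thm:smooth}, fiberwise connectedness of $X$ from reductivity of $G$ and parabolicity of $Q$, and condition (iii) (closedness of $(X^\theta)^\circ$ in $X^\theta$) from the finite-\'etale component scheme argument of (1), together with (4) in the case $X=Q$. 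Finally for (6), the simply connected hypothesis and part (2) give $K=K^\circ$, whence $Q\cap K = Q\cap K^\circ$ is connected by (3) and (4). Every geometric fiber of $\bfG$ is then connected: at $t_0\neq 0$ it is a form of the connected group $G$, and at $t_0=0$ it is the Cartan motion group $K\ltimes \underline{\fp}$ (Corollary \ref{cor:cartan_motion_group}), connected because $K$ is; the analogous statement for $\bfQ$ uses $(Q\cap K)\ltimes \underline{\fp\cap\fq}$. Hence $\bfG^\circ=\bfG$ and $\bfQ^\circ=\bfQ$.

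The principal obstacle is establishing that the component scheme $K/K^\circ$ is representable by a finite \'etale $k$-scheme over an arbitrary $\bZ\left[1/2\right]$-algebra $k$: this is what underlies both the affineness of $K^\circ$ in (1) and the closedness condition (iii) of Proposition \ref{prop:affine} used in (5). Once this structural fact is in place, the remaining items are formal consequences of (1)--(3).
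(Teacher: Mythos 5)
Your route is essentially the paper's: fiberwise reduction to the classical results (Richardson--Springer for reductivity of the fixed-point group, Steinberg for connectedness in the simply connected case, the $\theta$-stable-parabolic-meets-$K^\circ$ statement for (3)), followed by the formal deductions of (4)--(6) via Proposition \ref{prop:affine} and Corollary \ref{cor:cartan_motion_group}. The one step you flag as ``the principal obstacle''---that $K^\circ$ is closed in $K$ (equivalently that the component scheme is finite \'etale), which you correctly identify as what drives both the affineness in (1) and condition (iii) of Proposition \ref{prop:affine} in (5)---is not something you need to prove from scratch: it is exactly the content of the reference the paper leans on, namely \cite[Proposition 3.1.3]{MR3362641}, which asserts for a smooth affine group scheme over an arbitrary base whose fibers have reductive identity components that the relative identity component is a closed (hence affine, hence reductive) subgroup scheme with finite \'etale component group. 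Here it applies because $K=G^\theta$ and $Q^\theta=Q\cap K$ are smooth and affine (smoothness of the fixed locus uses $1/2\in k$, as in \cite[the proof of Lemma 3.1]{hayashilinebdl}). With that citation supplied, your argument closes; the only other cosmetic difference is that for (3) the paper quotes \cite[Propositions 5.2.5, 5.3.1]{hayashijanuszewski} for the scheme-theoretic version rather than invoking Vust's theorem fiberwise and descending, but the two are interchangeable since parabolicity is by definition smoothness plus a fibral condition.
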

	
	We remark that the (RA) hypothesis of $G$ is not needed here.
	
	\begin{proof}
		For (1), see \cite[the beginning of Section 1]{MR1066573} and \cite[Proposition 3.1.3]{MR3362641}. Part (2) is a consequence of \cite[Theorem 8.1]{MR0230728}. Part (3) follows from (1), \cite[Proposition 3.1.3]{MR3362641}, and \cite[Propositions 5.2.5, 5.3.1]{hayashijanuszewski}. For (4), we may work fiberwisely to assume $k=F$ is a field by definition of the unit component. Then the assertion follows since $Q\cap K^\circ$ is a connected open subgroup of $Q\cap K$. Part (5) follows from \cite[Proposition 3.1.3]{MR3362641}, (4), and Corollary \ref{cor:affine}. For (6), we may see on the locus of $t\neq 0$ and at $t=0$. The equality on $t\neq 0$ follows from Lemma \ref{lem:relation}. The equality at $t=0$ for $\bfG$ follows from (2). The equality at $t=0$ for $\bfQ$ is verified by (2) and (4).
	\end{proof}
	
	\begin{thm}\label{thm:stabilizer}
		Assume the following conditions:
		\begin{enumerate}
			\renewcommand{\labelenumi}{(\roman{enumi})}
			\item $G$ and $K^\circ$ are of type $\mathrm{(RA)}$.
			\item Every geometric fiber of $Q\cap K^\circ$ admits a maximal torus $T$ such that the differential of each nonzero weight of the geometric fiber of $\fp$ is nonzero.
		\end{enumerate} 
		Then:
		\begin{enumerate}
			\renewcommand{\labelenumi}{(\arabic{enumi})}
			\item We have $N_{\bfG^\circ}(\underline{\bfq})=\bfQ^\circ$. In particular, the $\bfG^\circ$-orbit in $\overline{\bfG_t/\bfQ_t}$ attached to $\bfq\in \overline{\bfG_t/\bfQ_t}(k\left[t\right])$ (recall Proposition \ref{prop:action}) is isomorphic to $\bfG^\circ/\bfQ^\circ$.
			\item The $\bfG^\circ$-orbit of (1) is represented by a smooth quasi-compact separated scheme over $k\left[t\right]$.
		\end{enumerate} 
	\end{thm}
	
	\begin{rem}
		The orbit map $\bfG^\circ/\bfQ^\circ\hookrightarrow\Gr(\bfg)$ only depends on the $K^\circ(k)$-conjugacy class of $Q$. On the other hand, different $K^\circ(k)$-conjugacy classes give different orbits in general. For example, if $k=\bC$, then the two affine lines at $t=0$ in Example \ref{ex:SO(2,1)} arise from the two closed $K^\circ(\bC)$-orbits in $\bP^1_{\bC}(\bC)$.
	\end{rem}
	
	\begin{cor}\label{cor:orbit}
		Assume the following conditions:
		\begin{enumerate}
			\renewcommand{\labelenumi}{(\roman{enumi})}
			\item $G$ is simply connected.
			\item $G$ and $K$ are of type $\mathrm{(RA)}$ (recall that $K$ is reductive under the assumption (i) by Lemma \ref{lem:unit_component} (1) and (2)).
			\item Every geometric fiber of $Q\cap K$ admits a maximal torus $T$ such that the differential of each nonzero weight of the geometric fiber of $\fp$ is nonzero.
		\end{enumerate}
		Then:
		\begin{enumerate}
			\renewcommand{\labelenumi}{(\arabic{enumi})}
			\item The $\bfG$-orbit in $\overline{\bfG_t/\bfQ_t}$ attached to $\bfq\in \overline{\bfG_t/\bfQ_t}(k\left[t\right])$ is isomorphic to $\bfG/\bfQ$.
			\item The $\bfG$-orbit of (1) is represented by a smooth quasi-compact separated scheme over $k\left[t\right]$.
		\end{enumerate}
	\end{cor}
	
	\begin{proof}
		This is immediate from Theorem \ref{thm:stabilizer} and Lemma \ref{lem:unit_component} (6).
	\end{proof}
	
	The key to the proof of Theorem \ref{thm:stabilizer} is to compute the normalizer of $\bfq$ at $t=0$. For this, let us introduce some notations: Suppose that we are given $k$-modules $\fl$ and $\fm_i$ ($i\in \{1,2,3\}$) with $\fm_3\subset\fl$ and together with a $k$-linear map $\left[-,-\right]:\fm_1\otimes_k \fm_2\to\fl$. We note that in latter applications, $\fl$ will be a Lie algebra over $k$, $\fm_i$ will be $k$-submodules for $i\in \{1,2,3\}$, and $\left[-,-\right]$ will be the restriction of the Lie bracket of $\fl$. Let us set
	\begin{equation}
		N_{\fm_1}(\fm_2;\fm_3)\coloneqq
		\{x\in \fm_1:~\left[x,\fm_2\right]\subset \fm_3\}.\label{eq:n}
	\end{equation}
	We remark that $N_{\fm_1}(\fm_2;\fm_3)$ is identified with the fiber product
	\begin{equation}
		\fm_1\times_{\Hom_k(\fm_2,\fl)} \Hom_k(\fm_2,\fm_3)\label{eq:pullback}
	\end{equation}
	of the map $\fm_1\to\Hom_k(\fm_2,\fl)$
	corresponding to $\left[-,-\right]$ and the inclusion map
	\[\Hom_k(\fm_2,\fm_3)\hookrightarrow\Hom_k(\fm_2,\fl).\]
	
	We next introduce a copresheaf analog of \eqref{eq:n}: Let $\fl$ and $\fm_i$ ($i\in \{1,2,3\}$) be as before. Assume:
	\begin{enumerate}
		\renewcommand{\labelenumi}{(\roman{enumi})}
		\item $\fm_2$ is finitely presented as a $k$-module, and
		\item $\fm_3$ is a direct summand of $\fl$.
	\end{enumerate}
	Then we define a copresheaf
	$N_{\underline{\fm}_1}(\underline{\fm}_2,\underline{\fm}_3)$ on the category of commutative $k$-algebras by
	\begin{flalign*}
		&N_{\underline{\fm}_1}(\underline{\fm}_2,\underline{\fm}_3)(R)\\
		&=\{x\in \fm_1\otimes_k R:~
		x\in N_{\fm_1\otimes_k S}
		(\fm_2\otimes_k S,\fm_3\otimes_k S)~\mathrm{for~every~}R\mathrm{-algebra~}S\}\\
		&=N_{\fm_1\otimes_k R}(\fm_2\otimes_k R,\fm_3\otimes_k R),
	\end{flalign*}
	where $x$ is regarded as an element of $\fm_1\otimes_k S$ in the second line through the identification
	$\fm_1\otimes_k S\cong (\fm_1\otimes_k R)\otimes_R S$
	and the unit $\fm_1\otimes_k R\to (\fm_1\otimes_k R)\otimes_R S$. The last equality follows by $S$-linear formations.
	
	\begin{lem}\label{lem:base_change_I}
		\begin{enumerate}
			\renewcommand{\labelenumi}{(\arabic{enumi})}
			\item For any commutative flat $k$-algebra $R$, we have a canonical isomorphism
			\[N_{\fm_1}(\fm_2;\fm_3)\otimes_k R\cong N_{\fm_1\otimes_k R}(\fm_2\otimes_k R,\fm_3\otimes_k R).\] 
			\item Suppose that $k=F$ is a field (of characteristic not two). Then we have \[N_{\underline{\fm}_1}(\underline{\fm}_2;\underline{\fm}_3)
			=\underline{N_{\fm_1}(\fm_2;\fm_3)}.\]
		\end{enumerate}
	\end{lem}
	
	We do not need the hypothesis $1/2\in k$ for the formalism of $N_{\underline{\fm}_1}(\underline{\fm}_2;\underline{\fm}_3)$. The field $F$ in (2) can be of characteristic two if one wants. 
	
	\begin{proof}
		Part (1) follows from \eqref{eq:pullback} through a formal argument: We have
		\begin{flalign*}
			&N_{\fm_1}(\fm_2;\fm_3)\otimes_k R\\
			&\cong (\fm_1\times_{\Hom_k(\fm_2,\fm_3)} \Hom_k(\fm_2,\fl))\otimes_k R\\
			&\cong (\fm_1\otimes_k R)\times_{\Hom_k(\fm_2,\fm_3)\otimes_k R} (\Hom_k(\fm_2,\fl)\otimes_k R)\\
			&\cong (\fm_1\otimes_k R)\times_{\Hom_R(\fm_2\otimes_k R,\fm_3\otimes_k R)} \Hom_R(\fm_2\otimes_k R,\fl\otimes_k R)\\
			&\cong N_{\fm_1\otimes_k R}(\fm_2\otimes_k R,\fm_3\otimes_k R)
		\end{flalign*}
		for any commutative flat $k$-algebra $R$. In fact, the second isomorphism follows since flat base changes respect fiber products. The third isomorphism is verified by the fact that $\fm_2$ is finitely presented as a $k$-module. We used the assumption (ii) on $\fm_3$ in the last isomorphism to apply \eqref{eq:pullback} to $N_{\fm_1\otimes_k R}(\fm_2\otimes_k R,\fm_3\otimes_k R)$. Part (2) is immediate from (1).
	\end{proof}

	We next see that we can apply these notations to our setting:
	
	\begin{lem}\label{lem:direct_summand}
		The $k$-submodules $\fk$, $\fp$, $\fq$, $\fp\cap\fq$, and $\fk\cap\fq$ are direct summands of $\fg$.
	\end{lem}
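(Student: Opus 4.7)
The plan is to handle the five submodules in three stages, using only the hypothesis $1/2\in k$ together with smoothness of the parabolic quotient.

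First I would treat $\fk$ and $\fp$. Because $\theta^2=\id$ and $1/2\in k$, the endomorphisms $e_+=\tfrac{1+\theta}{2}$ and $e_-=\tfrac{1-\theta}{2}$ are orthogonal idempotents of the $k$-module $\fg$ with $e_++e_-=\id$, and their images are exactly $\fk$ and $\fp$ respectively. Hence $\fg=\fk\oplus\fp$ as $k$-modules, so both summands are direct summands of $\fg$.

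Next I would show that $\fq$ is a direct summand of $\fg$. Since $Q\subset G$ is a parabolic subgroup of the reductive group scheme $G$, the fppf quotient $G/Q$ exists and is smooth (projective) over $k$; see \cite[Corollaire 3.6]{MR0218364}. The closed immersion $Q\hookrightarrow G$ is therefore a regular immersion between smooth $k$-schemes, so pulling back the cotangent exact sequence along the unit section yields a short exact sequence of $k$-modules
\[
0\to \fq\to\fg\to T_{eQ}(G/Q)\to 0
\]
whose third term is finitely generated projective over $k$. Consequently this sequence splits and $\fq$ is a direct summand of $\fg$.

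Finally, I would deduce the statement for $\fk\cap\fq$ and $\fp\cap\fq$ by combining the first two steps. Since $Q$ is $\theta$-stable, $\theta$ restricts to an involution of $\fq$, and the argument of the first paragraph gives $\fq=\fq^\theta\oplus\fq^{-\theta}=(\fk\cap\fq)\oplus(\fp\cap\fq)$ as $k$-modules. Each of these is then a direct summand of $\fq$, which is in turn a direct summand of $\fg$; composing the splittings gives the claim. The only nontrivial ingredient is the smoothness of $G/Q$, which is standard from SGA 3 and is the sole obstacle since without it the key sequence in the second paragraph need not split.
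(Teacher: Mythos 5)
Your proposal is correct and follows essentially the same route as the paper: the decompositions $\fg=\fk\oplus\fp$ and $\fq=(\fk\cap\fq)\oplus(\fp\cap\fq)$ from $1/2\in k$ and $\theta$-stability of $\fq$, plus the fact that $\fg/\fq$ is finitely generated projective (which the paper gets by citing \cite[Corollaire 4.11.8]{MR0212023} rather than via the tangent sequence of $G/Q$, but this is the same underlying fact). No gaps.
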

	
	Although we should have already known that $\fq$ is a direct summand of $\fg$ on the course of defining the map \eqref{eq:iota}, we show this here for convenience to the reader. In particular, we do not need the (RA) hypothesis here.
	
	\begin{proof}
		Notice that $\fg/\fq$ is finitely generated and projective as a $k$-module. In fact, this statement is local in the Zariski topology of $\Spec k$. This therefore follows from \cite[Corollaire 4.11.8]{MR0212023} (recall that $\fg$ is finitely generated and projective). In particular, $\fq$ is a direct summand of $\fg$.
		
		The assertions for $\fk$ and $\fp$ follow from the decomposition $\fg=\fk\oplus\fp$.
		Since $\fq$ is $\theta$-stable, it restricts to the decomposition
		\begin{equation}
			\fq=(\fq\cap\fk)\oplus(\fq\cap\fp).\label{eq:q_dec}
		\end{equation}
		Therefore the assertions for $\fp\cap\fq$ and $\fk\cap\fq$ are verified by combining that for $\fq$ with \eqref{eq:q_dec}.
	\end{proof}
	
	\begin{lem}\label{lem:base_change_II}
		\begin{enumerate}
			\renewcommand{\labelenumi}{(\arabic{enumi})}
			\item The definitions of
			\[\begin{array}{cccc}
				\fk,&\fp,&\fq\cap \fk=\{x\in\fq:~\theta(x)=x\},
				&\fq\cap \fp=\{x\in\fq:~\theta(x)=-x\}
			\end{array}\]
			commute with arbitrary base changes, i.e., for any commutative $k$-algebra $R$, we have a canonical isomorphism
			\[\fk\otimes_k R\cong\{x\in\fg\otimes_k R:~(\theta\otimes_k R)(x)=x\}\]
			\[\fp\otimes_k R\cong\{x\in\fg\otimes_k R:~(\theta\otimes_k R)(x)=-x\}\]
			\[(\fq\cap \fk)\otimes_k R\cong\{x\in\fq\otimes_k R:~(\theta\otimes_k R)(x)=x\}.\]
			\[(\fq\cap \fp)\otimes_k R\cong\{x\in\fq\otimes_k R:~(\theta\otimes_k R)(x)=-x\}.\]
			\item The intersections $\fk\cap \fq$ and $\fp\cap\fq$ commute with arbitrary base changes. That is, for any $k$-algebra $R$, we have canonical isomorphisms
			\[\begin{array}{cc}
				(\fk\cap \fq)\otimes_k R\cong (\fk\otimes_k R)\cap (\fq\otimes_k R),&
				(\fp\cap \fq)\otimes_k R\cong (\fp\otimes_k R)\cap (\fq\otimes_k R).
			\end{array}\]
		\end{enumerate}
	\end{lem}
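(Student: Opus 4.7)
The plan is to deduce everything from the fact that $1/2\in k$ together with the direct-summand statements in Lemma \ref{lem:direct_summand}, since all the subspaces in question are cut out by idempotent projectors that tensor nicely.

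First I would handle part (1). Since $1/2\in k$ and $\theta^2=\id$, the operators $e_+=(1+\theta)/2$ and $e_-=(1-\theta)/2$ are orthogonal idempotents on $\fg$ whose images are $\fk$ and $\fp$, giving $\fg=\fk\oplus\fp$. For any commutative $k$-algebra $R$, the operators $e_+\otimes R$ and $e_-\otimes R$ remain orthogonal idempotents, so $\fg\otimes_k R=(\fk\otimes_k R)\oplus(\fp\otimes_k R)$ and $\theta\otimes R$ acts as $+1$ (resp.\ $-1$) on the first (resp.\ second) summand. This yields the first two canonical isomorphisms. Because $\fq$ is $\theta$-stable, $e_\pm$ restrict to idempotents on $\fq$ whose images are $\fq\cap\fk$ and $\fq\cap\fp$, yielding the analogous decomposition $\fq=(\fq\cap\fk)\oplus(\fq\cap\fp)$ (this is \eqref{eq:q_dec} in the proof of Lemma \ref{lem:direct_summand}); tensoring with $R$ and repeating the eigenspace argument gives the remaining two formulas.

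For part (2), the key preliminary step is to note that $\fq\cap\fk$ is a direct summand not only of $\fg$ but of $\fk$, and similarly $\fq\cap\fp$ is a direct summand of $\fp$. This follows since, picking any complement $\fg=(\fq\cap\fk)\oplus N$ from Lemma \ref{lem:direct_summand}, intersection with $\fk$ gives $\fk=(\fq\cap\fk)\oplus(N\cap\fk)$ because $\fq\cap\fk\subset\fk$ and $\fk$ is a submodule. Consequently the natural maps $(\fq\cap\fk)\otimes_k R\hookrightarrow\fk\otimes_k R$ and $(\fq\cap\fp)\otimes_k R\hookrightarrow\fp\otimes_k R$ are injections for every $R$.

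With these injections in hand, tensor the two decompositions $\fg=\fk\oplus\fp$ and $\fq=(\fq\cap\fk)\oplus(\fq\cap\fp)$ by $R$ and use the projection $\pi\colon\fg\otimes_k R\twoheadrightarrow\fp\otimes_k R$ coming from the first decomposition. By naturality, $\pi$ carries $(\fq\cap\fk)\otimes_k R$ to $0$ and $(\fq\cap\fp)\otimes_k R$ isomorphically onto its image inside $\fp\otimes_k R$. Therefore an element $x\in\fq\otimes_k R$ lies in $\fk\otimes_k R$ iff $\pi(x)=0$ iff its $(\fq\cap\fp)\otimes_k R$-component vanishes iff $x\in(\fq\cap\fk)\otimes_k R$, proving $(\fq\otimes_k R)\cap(\fk\otimes_k R)=(\fq\cap\fk)\otimes_k R$. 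The other identity is symmetric.

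There is no genuine obstacle here: the argument is entirely formal once one records that every relevant subspace is a direct summand. The only mild subtlety—making sure the inclusions $(\fq\cap\fk)\otimes_k R\hookrightarrow\fk\otimes_k R$ and $(\fq\cap\fp)\otimes_k R\hookrightarrow\fp\otimes_k R$ remain injective without flatness hypotheses on $R$—is resolved by the preliminary step showing these inclusions are split at the level of $k$-modules.
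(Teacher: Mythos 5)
Your proof is correct and follows essentially the same route as the paper: both parts rest on the eigenspace decompositions $\fg=\fk\oplus\fp$ and $\fq=(\fq\cap\fk)\oplus(\fq\cap\fp)$ induced by the idempotents $(1\pm\theta)/2$, which exist because $1/2\in k$ and commute with arbitrary base change. Your more detailed treatment of part (2) (the split injections and the projection argument) simply spells out what the paper dismisses as ``a formal consequence of (1).''
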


	\begin{proof}
		Since $Q$ is $\theta$-stable, so is $\fq$ in $\fg$. Moreover,
		\[\begin{array}{cc}
			\fg=\fk\oplus \fp,&\fq=(\fk\cap \fq)\oplus (\fp\cap\fq)
		\end{array}\]
		exhibit the eigenspace decompositions of $\fg$ and $\fq$ respectively for the involution $\theta$. Since we are working over $\bZ\left[1/2\right]$-algebras, these decompositions commute with arbitrary base changes. This shows (1). Part (2) is a formal consequence of (1). 	
	\end{proof}
	
	To compute the normalizer, let us note generalities on algebraic groups:
	
	\begin{lem}\label{lem:alg_grp}
		Assume that $k=F$ be an algebraically closed field. 
		\begin{enumerate}
			\renewcommand{\labelenumi}{(\arabic{enumi})}
			\item Any maximal torus $H$ in $Q$ is a maximal torus in $G$.
			\item Let $T$ be a maximal torus of $Q\cap K^\circ$. Then the centralizer $Z_G(T)$ of $T$ in $G$ is contained in $Q$.
		\end{enumerate}
	\end{lem}
	
	We do not need any condition on the characteristic of $F$ for (1). We only need to assume the characteristic of $F$ not to be two for (2).
	
	\begin{proof}
		Choose a Borel subgroup $B$ of $G$ contained in $Q$ (\cite[Chapter IV, Corollary of Section 11.2]{MR1102012}). Take any maximal torus $H'$ of $B$. Then $H'$ is a maximal torus of $G$ and therefore of $Q$ by \cite[Proposition 4.2]{MR0219539}. Hence $H$ in (1) and $H'$ are $Q(F)$-conjugate to each other by \cite[Theorem 1.1.19 2]{MR3362641}. Since $H'$ is a maximal torus in $G$, so is $H$.
		
		For (2), let $H$ be any maximal torus in $Q$ containing $T$. Then $H$ is a maximal torus of $G$ by (1). Notice also that $T$ is a maximal torus in $K^\circ$ by Lemma \ref{lem:unit_component} and (1). Hence $Z_G(T)$ is the unique maximal torus of $G$ containing $T$ (\cite[Theorem 3.1.3]{MR4627704}). The uniqueness implies $Z_G(T)=H\subset Q$ as desired.
	\end{proof}

	\begin{proof}[Proof of Theorem \ref{thm:stabilizer}]
		For (1), we may only see the assertions at $t=0$ by a similar argument to Theorem \ref{thm:G/K} (use Proposition \ref{prop:restriction} and Example \ref{ex:Q}). Recall that the fiber of $\bfQ^\circ$ at $t=0$ is $(Q\cap K^\circ)\ltimes \underline{\fp\cap\fq}$ by Lemma \ref{lem:unit_component}. We wish to prove that $N_{K^\circ \ltimes\underline{\fp}}(\bfq_0)=(Q\cap K^\circ)\ltimes \underline{\fp\cap\fq}$. Since the Lie algebra of the fiber of $\bfQ^\circ$ at $t=0$ is $\bfq_0$, $N_{K^\circ \ltimes\underline{\fp}}(\bfq_0)$ contains $(Q\cap K^\circ)\ltimes \underline{\fp\cap\fq}$. We wish to prove the converse containment.
		
		We note that the adjoint representation of $K^\circ\ltimes\fp$ on the Lie algebra $\fk\ltimes\fp$ of $K^\circ\ltimes\fp$ is expressed as
		\begin{equation}
			\Ad((g,x))(y,z)=(\Ad(g)y,\Ad(g)(\left[x,y\right]+z)),\label{eq:expression}
		\end{equation}
		where $(g,x)\in (K^\circ\ltimes\fp)(R)$ and $(y,z)\in (\fk\ltimes\fp)\otimes_k R$ with $R$ running through all commutative $k$-algebras. Let $R$ be a commutative $k$-algebra, and $(g,x)$ be an $R$-point of $K^\circ\ltimes \fp$. If $(g,x)$ normalizes $((\fq\cap \fk)\ltimes (\fp\cap\fq))\otimes_k R$ then $g \in (K^\circ\cap Q)(R)$ by \eqref{eq:expression} and Lemmas \ref{lem:unit_component} (3), \ref{lem:normalizer_q}. Henceforth we may assume $g \in (K^\circ\cap Q)(R)$. Then $(g,x)$ normalizes $((\fq\cap \fk)\ltimes (\fp\cap\fq))\otimes_k R$ if and only if $x$ belongs to $N_{\underline{\fp}}(\underline{\fk\cap\fq};\underline{\fp\cap\fq})(R)$.
		
		The proof of (1) will be completed by showing $N_{\underline{\fp}}(\underline{\fk\cap\fq};\underline{\fp\cap\fq})=\underline{\fp\cap\fq}$. It is evident that the left hand side contains the right hand side. To see the converse, observe that the left hand side is equal to $N_{\underline{\fp}}(\underline{\fk\cap\fq};\underline{\fq})$
		by Lemma \ref{lem:base_change_II} (2). A similar argument to \cite[Rappel 5.3.0]{MR0218363} shows that $N_{\underline{\fp}}(\underline{\fk\cap\fq};\underline{\fq})$ is represented by an affine $k$-scheme of finite presentation. The right hand side $\underline{\fp\cap\fq}$ is represented by a smooth affine $k$-scheme by Lemma \ref{lem:direct_summand}. We may therefore pass to the geometric fibers by \cite[Corollaire (17.9.5)]{MR0238860} to assume $k=F$ to be an algebraically closed field. In view of Lemmas \ref{lem:base_change_II} (2) and \ref{lem:base_change_I} (2), the assertion is reduced to the containment
		$N_{\fp}(\fk\cap\fq;\fq)\subset\fp\cap\fq$.
		
		Let $x\in N_{\fp}(\fk\cap\fq;\fq)\subset\fp$. We wish to prove $x\in\fq$. Take a maximal torus $T\subset K^\circ$ in (ii). Since $N_{\fp}(\fk\cap\fq;\fq)$ is a $T$-submodule of $\fp$, we may assume that $x$ has weight $\alpha$. Suppose that $\alpha$ is nonzero. Then one can find an element $h\in\ft$ such that $\left[h,x\right]=x$ by the hypothesis of $T$. Since $h\in\fk\cap\fq$, we get $x\in\fq$ (recall the definition of $N_{\fp}(\fk\cap\fq;\fq)$). Suppose $\alpha$ to be trivial. Let $H$ be the centralizer of $T$ in $G$. Observe that $\fh$ is the $T$-fixed point subalgebra of $\fg$, i.e., the 0-weight space of $\fg$ with respect to the adjoint action of $T$ by \cite[Th\'eor\`eme 5.2.3 (ii), Section 3.5.1, Remarque 3.6.1]{MR0212023}. We also have $\fh\subset\fq$ by Lemma \ref{lem:alg_grp} (2). We now obtain $x\in\fh\subset \fq$ as desired.
		
		Part (2) follows from Theorem \ref{thm:smooth}, \cite[Th\'eor\`eme 10.1.2]{MR0257095}, and Lemma \ref{lem:smooth}.
	\end{proof}

	\appendix
	\section{Scheme-theoretic image}\label{sec:sch_img}
	
	Following \cite[Chapter 2, Exercise 3.17]{MR1917232} and \cite[Sections 01R5, 01RA, 01U2]{stacks}, we collect a few basic facts on scheme-theoretic image which we use in this paper.
	
	\begin{defn}[{\cite[Chapter 2, Exercise 3.17]{MR1917232}}]\label{defn:img}
		Let $f:X\to Y$ be a quasi-compact morphism of schemes. Then we write $\Image f\coloneqq\Spec \cO_Y/\Ker f^\sharp$, and call it the scheme-theoretic image of $f$.
	\end{defn}
	
	\begin{defn}[{\cite[Definition 01RB, Lemmas 01RD, 01RG]{stacks}}]\label{defn:closure}
		Let $i:Y\hookrightarrow X$ be a quasi-compact (not necessarily open) immersion. Then we call $\Image i$ the scheme-theoretic closure of $Y$ in $X$. We say $Y$ is scheme-theoretically dense in $X$ if $\Image i=X$, in which case $i$ is automatically an open immersion.
	\end{defn}
	
	The following evident fact will be used repeatedly in Section \ref{sec:flag}:
	
	\begin{ex}[cf.~{\cite[Example 056A]{stacks}}]\label{ex:open_dense}
		Let $k$ be a commutative ring. Then $\Spec k\left[t^{\pm 1}\right]$ is scheme-theoretically dense in $\Spec k\left[t\right]$.
	\end{ex}
	
	\begin{ex}[{\cite[Chapter 2, Proposition 2.24]{MR1917232}}]\label{ex:closed_immersion}
		Let $i:Y\hookrightarrow X$ be a closed immersion. Then we have a canonical isomorphism $Y\cong\Image i$. We will identify $\Image i$ with $Y$ to write $Y=\Image i$.
	\end{ex}
	
	The scheme-theoretic image commutes with formation of flat base changes:
	
	\begin{prop}[{\cite[Lemma 081I]{stacks}}]\label{prop:bc_image}
		Let $f:X\to Y$ be a quasi-compact morphism of schemes, and $g:Y'\to Y$ be a flat morphism of schemes. Then the scheme-theoretic image of the projection $X\times_Y Y'\to Y'$ is canonically isomorphic to $(\Image f)\times_Y Y'$.
	\end{prop}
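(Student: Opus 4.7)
The plan is to unravel the definitions and reduce the claim to two standard facts from sheaf theory: (a) flat pullback is exact on quasi-coherent sheaves, hence commutes with formation of kernels; (b) quasi-coherent pushforward along a quasi-compact morphism commutes with flat base change. Both facts are standard and local on the target, so I will feel free to work locally on $Y$ and $Y'$.

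Denote by $f':X\times_Y Y'\to Y'$ and $g':X\times_Y Y'\to X$ the two projections in the Cartesian square, so that $g\circ f'=f\circ g'$. By Definition \ref{defn:img}, the scheme-theoretic image $\Image f\subset Y$ is cut out by the quasi-coherent ideal sheaf $\cI\coloneqq\Ker(f^\sharp\colon\cO_Y\to f_\ast\cO_X)$, and similarly $\Image f'\subset Y'$ is cut out by $\cI'\coloneqq\Ker(f'^\sharp\colon\cO_{Y'}\to f'_\ast\cO_{X\times_Y Y'})$. Since $g$ is flat, pulling back the left-exact sequence $0\to\cI\to\cO_Y\to f_\ast\cO_X$ along $g$ yields a left-exact sequence $0\to g^\ast\cI\to\cO_{Y'}\to g^\ast f_\ast\cO_X$. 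Flat base change for the quasi-compact morphism $f$ provides a canonical isomorphism $g^\ast f_\ast\cO_X\cong f'_\ast g'^\ast\cO_X=f'_\ast\cO_{X\times_Y Y'}$, under which the second map is identified with $f'^\sharp$. Consequently $g^\ast\cI\cong\cI'$ as quasi-coherent ideal sheaves on $Y'$.

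Now observe that $\Image f\times_Y Y'$ is the closed subscheme of $Y'$ cut out by the pullback ideal $g^\ast\cI$, while $\Image f'$ is the one cut out by $\cI'$. The identification $g^\ast\cI\cong\cI'$ from the previous step therefore gives the desired canonical isomorphism $\Image f\times_Y Y'\cong\Image f'$ of closed subschemes of $Y'$, and by construction this isomorphism is functorial in the Cartesian datum.

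The main subtlety is verifying flat base change $g^\ast f_\ast\cO_X\cong f'_\ast g'^\ast\cO_X$; working locally on $Y$ and $Y'$ reduces to the case where both are affine, in which case quasi-compactness of $f$ lets one cover $X$ by finitely many affine opens and the statement becomes an elementary computation in commutative algebra (the key input being that tensoring with a flat module preserves equalisers, i.e., kernels of pairs of maps). This is precisely the content of \cite[Lemma 081I]{stacks}, which we invoke.
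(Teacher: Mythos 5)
The paper does not actually prove this proposition; it is imported verbatim from the Stacks Project as \cite[Lemma 081I]{stacks}, so there is no internal argument to compare against. Your write-up essentially reconstructs the standard proof behind that citation: identify both scheme-theoretic images with the closed subschemes cut out by $\Ker(f^\sharp)$ and $\Ker(f'^\sharp)$, use exactness of $g^\ast$ for flat $g$ to see that $g^\ast\Ker(f^\sharp)$ is again an ideal sheaf computing $\Image f\times_Y Y'$, and then match the two kernels. That is the right skeleton and the conclusion is correct.

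One step is stated more strongly than you can justify under the hypotheses as given: the base change isomorphism $g^\ast f_\ast\cO_X\cong f'_\ast\cO_{X\times_Y Y'}$ is the flat base change theorem for \emph{quasi-compact and quasi-separated} morphisms; for a morphism that is merely quasi-compact, $f_\ast\cO_X$ need not even be quasi-coherent and the base change map can fail to be an isomorphism (the section spaces involve equalizers over possibly non-quasi-compact overlaps $U_i\cap U_j$, which do not commute with $-\otimes_B B'$). The repair is exactly the reduction you sketch in your last paragraph, but it should carry the weight of the proof rather than serve as a verification of the base change isomorphism: over an affine open of $Y$, choose a finite affine cover $X=\bigcup_{i=1}^n U_i$ and observe that $\Ker(f^\sharp)=\Ker\bigl(\cO_Y\to\prod_{i=1}^n (f|_{U_i})_\ast\cO_{U_i}\bigr)$, a kernel into a \emph{finite} product of pushforwards along affine morphisms; this finite-limit description, not $f_\ast\cO_X$ itself, is what commutes with flat base change, and it suffices because only the kernels are needed. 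With that rerouting the argument is complete. In all uses of the proposition in this paper the relevant morphisms are affine or immersions into separated schemes, hence quasi-separated, so nothing downstream is affected either way.
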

	
	Once we find a candidate for the scheme-theoretic image, the following assertion is useful for proving that it is exactly so:
	
	\begin{prop}\label{prop:detect_image}
		Let $X\overset{f}{\to} Y\overset{g}{\to} Z$ be quasi-compact morphisms of schemes. If $\Image f=Y$ then we have $\Image g=\Image (g\circ f)$. In particular, if $\Image f=Y$ and $g$ is a closed immersion then we have $\Image (g\circ f)=Y$.
	\end{prop}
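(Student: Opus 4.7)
The plan is to unwind the definition of scheme-theoretic image given in Definition \ref{defn:img} and reduce the statement to a sheaf-theoretic claim about kernels of structure homomorphisms. By definition, $\Image g = \Spec \cO_Z/\Ker g^\sharp$ and $\Image(g\circ f) = \Spec \cO_Z/\Ker (g\circ f)^\sharp$, so the first assertion amounts to the equality $\Ker g^\sharp = \Ker (g\circ f)^\sharp$ of quasi-coherent ideal sheaves on $Z$.

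The key step is to factor the structure homomorphism $(g\circ f)^\sharp: \cO_Z \to (g\circ f)_\ast\cO_X = g_\ast f_\ast\cO_X$ as the composite
\[\cO_Z \xrightarrow{g^\sharp} g_\ast\cO_Y \xrightarrow{g_\ast(f^\sharp)} g_\ast f_\ast\cO_X.\]
The hypothesis $\Image f = Y$ translates (again by Definition \ref{defn:img}) to $\Ker f^\sharp = 0$, i.e., $f^\sharp:\cO_Y \to f_\ast\cO_X$ is injective as a morphism of sheaves. Since $g_\ast$ is a left exact functor on sheaves of abelian groups, $g_\ast(f^\sharp)$ remains injective, and therefore precomposing it with $g^\sharp$ does not change the kernel. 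This yields $\Ker (g\circ f)^\sharp = \Ker g^\sharp$ as desired.

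For the ``in particular'' clause, assume $g$ is a closed immersion and $\Image f = Y$. By Example \ref{ex:closed_immersion}, we may identify $Y$ with $\Image g$ as closed subschemes of $Z$. The first part of the proposition then gives $\Image(g\circ f) = \Image g = Y$.

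I do not anticipate any real obstacle here; the only subtlety worth double-checking is the left exactness of $g_\ast$ applied to the monomorphism $f^\sharp$, which holds for arbitrary morphisms of ringed spaces and in particular ensures the factorization argument goes through without requiring any flatness or quasi-compactness hypothesis beyond what is stated (the quasi-compactness of $f$ and $g\circ f$ is used only to make the scheme-theoretic images well-defined in the sense of Definition \ref{defn:img}).
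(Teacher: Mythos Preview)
Your proof is correct and follows essentially the same approach as the paper: both reduce to showing $\Ker g^\sharp = \Ker(g\circ f)^\sharp$ by factoring $(g\circ f)^\sharp$ through $g_\ast(f^\sharp)$ and using that $g_\ast$ preserves the injectivity of $f^\sharp$. Your treatment of the ``in particular'' clause via Example \ref{ex:closed_immersion} is slightly more explicit than the paper's, but the argument is the same.
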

	
	\begin{proof}
		We remark that the equality $\Image f=Y$ holds if and only if $f^\sharp$ is monic. Assume these equivalent conditions. Then its direct image $g_\ast f^\sharp$ is also monic since $g_\ast$ is left exact. Since the structure homomorphism $(g\circ f)^\sharp$ is defined as the composite map
		\[\cO_Z\overset{g^\sharp}{\to} g_\ast\cO_Y\overset{g_\ast f^\sharp}{\hookrightarrow} g_\ast f_\ast\cO_X=(g\circ f)_\ast \cO_X,\]
		the kernels of $g^\sharp$ and $(g\circ f)^\sharp$ coincide. The assertion now follows by definition of the scheme-theoretic image.
	\end{proof}


\begin{thebibliography}{10}
		
		\bibitem{MR3786301}
		J.~Adams and O.~Ta\"{\i}bi.
		\newblock Galois and {C}artan cohomology of real groups.
		\newblock {\em Duke Math. J.}, 167(6):1057--1097, 2018.
		
		
		\bibitem{MR0335524}
		S.~Anantharaman.
		\newblock Sch\'{e}mas en groupes, espaces homog\`enes et espaces
		alg\'{e}briques sur une base de dimension 1.
		\newblock In {\em Sur les groupes alg\'{e}briques}, pages 5--79. Bull. Soc.
		Math. France, M\'{e}m. 33. 1973.
		
		\bibitem{MR3797197}
		D.~Barbasch, N.~Higson, and E.~Subag.
		\newblock Algebraic families of groups and commuting involutions.
		\newblock {\em Internat. J. Math.}, 29(4):1850030, 18, 2018.
		
		
		\bibitem{MR4123111}
		J.~Bernstein, N.~Higson, and E.~Subag.
		\newblock Contractions of representations and algebraic families of
		{H}arish-{C}handra modules.
		\newblock {\em Int. Math. Res. Not. IMRN}, (11):3494--3520, 2020.
		
		\bibitem{MR4130851}
		J.~Bernstein, N.~Higson, and E.~Subag.
		\newblock Algebraic families of {H}arish-{C}handra pairs.
		\newblock {\em Int. Math. Res. Not. IMRN}, (15):4776--4808, 2020.
		
		\bibitem{MR0234961}
		J.~E. Bertin.
		\newblock G\'{e}n\'{e}ralit\'{e}s sur les pr\'{e}sch\'{e}mas en groupes.
		\newblock In {\em Sch\'{e}mas en {G}roupes ({S}\'{e}m. {G}\'{e}om\'{e}trie
			{A}lg\'{e}brique, {I}nst. {H}autes \'{E}tudes {S}ci., 1963/64), {F}asc. 2a,
			{E}xpos\'{e} 6b}, page 112. Inst. Hautes \'{E}tudes Sci., Paris, 1965.
		
		\bibitem{MR1102012}
		A.~Borel.
		\newblock {\em Linear algebraic groups}, volume 126 of {\em Graduate Texts in
			Mathematics}.
		\newblock Springer-Verlag, New York, second edition, 1991.
		
		\bibitem{MR3545963}
		M.~Borovoi and Z.~Evenor.
		\newblock Real homogeneous spaces, {G}alois cohomology, and {R}eeder puzzles.
		\newblock {\em J. Algebra}, 467:307--365, 2016.
		
		\bibitem{MR3362641}
		B.~Conrad.
		\newblock Reductive group schemes.
		\newblock In {\em Autour des sch\'{e}mas en groupes. {V}ol. {I}}, volume 42/43
		of {\em Panor. Synth\`eses}, pages 93--444. Soc. Math. France, Paris, 2014.
		
		
		\bibitem{MR0212028}
		M.~Demazure.
		\newblock Structures alg\'{e}briques, cohomologie des groupes.
		\newblock In {\em Sch\'{e}mas en {G}roupes ({S}\'{e}m. {G}\'{e}om\'{e}trie
			{A}lg\'{e}brique, {I}nst. {H}autes \'{E}tudes {S}ci., 1963), {F}asc. 1,
			{E}xpos\'{e} 1}, page~42. Inst. Hautes \'{E}tudes Sci., Paris, 1963.
		
		\bibitem{MR0212023}
		M.~Demazure.
		\newblock Fibr\'{e}s tangents, alg\`ebres de {L}ie.
		\newblock In {\em Sch\'{e}mas en {G}roupes ({S}\'{e}m. {G}\'{e}om\'{e}trie
			{A}lg\'{e}brique, {I}nst. {H}autes \'{E}tudes {S}ci., 1963), {F}asc. 1,
			{E}xpos\'{e} 2}, page~40. Inst. Hautes \'{E}tudes Sci., Paris, 1963.
		
		\bibitem{MR0237513}
		M.~Demazure.
		\newblock Topologies et faisceaux.
		\newblock In {\em Sch\'{e}mas en {G}roupes ({S}\'{e}m. {G}\'{e}om\'{e}trie
			{A}lg\'{e}brique, {I}nst. {H}autes \'{E}tudes {S}ci., 1963), {F}asc. 1,
			{E}xpos\'{e} 4}, page~91. Inst. Hautes \'{E}tudes Sci., Paris, 1963.
		
		\bibitem{MR0228502}
		M.~Demazure.
		\newblock Groupes r\'{e}ductifs---g\'{e}n\'{e}ralit\'{e}s.
		\newblock In {\em Sch\'{e}mas en {G}roupes ({S}\'{e}m. {G}\'{e}om\'{e}trie
			{A}lg\'{e}brique, {I}nst. {H}autes \'{e}tudes {S}ci., 1964), {F}asc. 6,
			{E}xpos\'{e} 19}, page~34. Inst. Hautes \'{E}tudes Sci., Paris, 1965.
		
		\bibitem{MR0218363}
		M.~Demazure.
		\newblock Groupes r\'{e}ductifs: {D}\'{e}ploiements, sous-groupes,
		groupes-quotients.
		\newblock In {\em Sch\'{e}mas en {G}roupes ({S}\'{e}m. {G}\'{e}om\'{e}trie
			{A}lg\'{e}brique, {I}nst. {H}autes \'{E}tudes {S}ci., 1964), {F}asc. 6,
			{E}xpos\'{e} 22}, page 106. Inst. Hautes \'{E}tudes Sci., Paris, 1965.
		
		\bibitem{MR0228503}
		M.~Demazure.
		\newblock Automorphismes des groupes r\'{e}ductifs.
		\newblock In {\em Sch\'{e}mas en {G}roupes ({S}\'{e}m. {G}\'{e}om\'{e}trie
			{A}lg\'{e}brique, {I}nst. {H}autes \'{E}tudes {S}ci., 1963/64), {F}asc. 7,
			{E}xpos\'{e} 24}, page~87. Inst. Hautes \'{E}tudes Sci., Paris, 1965.
		
		\bibitem{MR0218364}
		M.~Demazure.
		\newblock Sous-groupes paraboliques des groupes r\'{e}ductifs.
		\newblock In {\em Sch\'{e}mas en {G}roupes ({S}\'{e}m. {G}\'{e}om\'{e}trie
			{A}lg\'{e}brique, {I}nst. {H}autes \'{E}tudes {S}ci., 1963/64), {F}asc. 7,
			{E}xpos\'{e} 26}, page~91. Inst. Hautes \'{E}tudes Sci., Paris, 1966.
		
		\bibitem{MR563524}
		M.~Demazure and P.~Gabriel.
		\newblock {\em Introduction to algebraic geometry and algebraic groups},
		volume~39 of {\em North-Holland Mathematics Studies}.
		\newblock North-Holland Publishing Co., Amsterdam-New York, 1980.
		\newblock Translated from the French by J. Bell.
		
		\bibitem{MR0257095}
		P.~Gabriel.
		\newblock Construction de pr\'{e}sch\'{e}mas quotient.
		\newblock In {\em Sch\'{e}mas en {G}roupes ({S}\'{e}m. {G}\'{e}om\'{e}trie
			{A}lg\'{e}brique, {I}nst. {H}autes \'{E}tudes {S}ci., 1963/64), {F}asc. 2a,
			{E}xpos\'{e} 5}, page~37. Inst. Hautes \'{E}tudes Sci., Paris, 1963.
		
		
		\bibitem{MR3289090}
		L.~Godinho and J.~Nat\'{a}rio.
		\newblock {\em An introduction to {R}iemannian geometry}.
		\newblock Universitext. Springer, Cham, 2014.
		\newblock With applications to mechanics and relativity.
		
		\bibitem{MR4225278}
		U.~G\"{o}rtz and T.~Wedhorn.
		\newblock {\em Algebraic geometry {I}. {S}chemes}.
		\newblock Springer Studium Mathematik---Master. Springer Spektrum, Wiesbaden,
		second edition, [2020] \copyright 2020.
		\newblock With examples and exercises.
		
		\bibitem{MR217083}
		A.~Grothendieck.
		\newblock \'{E}l\'{e}ments de g\'{e}om\'{e}trie alg\'{e}brique. {I}. {L}e
		langage des sch\'{e}mas.
		\newblock {\em Inst. Hautes \'{E}tudes Sci. Publ. Math.}, (4):228, 1960.
		
		
		\bibitem{MR217084}
		A.~Grothendieck.
		\newblock \'{E}l\'{e}ments de g\'{e}om\'{e}trie alg\'{e}brique. {II}. \'{E}tude
		globale \'{e}l\'{e}mentaire de quelques classes de morphismes.
		\newblock {\em Inst. Hautes \'{E}tudes Sci. Publ. Math.}, (8):222, 1961.
		
		\bibitem{MR0212024}
		A.~Grothendieck.
		\newblock Groupes diagonalisables.
		\newblock In {\em Sch\'{e}mas en {G}roupes ({S}\'{e}m. {G}\'{e}om\'{e}trie
			{A}lg\'{e}brique, {I}nst. {H}autes \'{E}tudes {S}ci., 1963/64), {F}asc. 3,
			{E}xpos\'{e} 8}, page~36. Inst. Hautes \'{E}tudes Sci., Paris, 1964.
		
		\bibitem{MR0215854}
		A.~Grothendieck.
		\newblock Groupes de type multiplicatif: {H}omomorphismes dans un sch\'{e}ma en
		groupes.
		\newblock In {\em Sch\'{e}mas en {G}roupes ({S}\'{e}m. {G}\'{e}om\'{e}trie
			{A}lg\'{e}brique, {I}nst. {H}autes \'{E}tudes {S}ci., 1963/64)}, pages Fasc.
		3, Expos\'{e} 9, 37. Inst. Hautes \'{E}tudes Sci., Paris, 1964.
		
		\bibitem{MR0219539}
		A.~Grothendieck.
		\newblock El\'{e}ments r\'{e}guliers: {S}uite. {A}pplication aux groupes
		alg\'{e}briques. ({A}vec un appendice par {J}.-{P}. {S}erre).
		\newblock In {\em Sch\'{e}mas en {G}roupes ({S}\'{e}m. {G}\'{e}om\'{e}trie
			{A}lg\'{e}brique, {I}nst. {H}autes \'{E}tudes {S}ci., 1963/64), {F}asc. 4,
			{E}xpos\'{e} 14}, page~53. Inst. Hautes \'{E}tudes Sci., Paris, 1964.
		
		\bibitem{MR0217086}
		A.~Grothendieck.
		\newblock \'{E}l\'{e}ments de g\'{e}om\'{e}trie alg\'{e}brique. {IV}. \'{E}tude
		locale des sch\'{e}mas et des morphismes de sch\'{e}mas. {III}.
		\newblock {\em Inst. Hautes \'{E}tudes Sci. Publ. Math.}, (28):255, 1966.
		
		\bibitem{MR0238860}
		A.~Grothendieck.
		\newblock \'{E}l\'{e}ments de g\'{e}om\'{e}trie alg\'{e}brique. {IV}. \'{E}tude
		locale des sch\'{e}mas et des morphismes de sch\'{e}mas {IV}.
		\newblock {\em Inst. Hautes \'{E}tudes Sci. Publ. Math.}, (32):361, 1967.
		
		
		\bibitem{hayashikgb}
		T.~Hayashi.
		\newblock {$\mathrm{SO}(3)$}-homogeneous decomposition of the flag scheme of
		{$\mathrm{SL}_3$} over {$\mathbb{Z}\left[1/2\right]$}, 2021.
		\newblock arXiv:2111.07905.
		
		
		\bibitem{MR4627704}
		T.~Hayashi.
		\newblock Half-integrality of line bundles on partial flag schemes of classical
		{L}ie groups.
		\newblock {\em Bull. Sci. Math.}, 188:Paper No. 103317, 2023.
		
		\bibitem{hayashijanuszewski}
		T.~Hayashi, F.~Januszewski.
		\newblock Families of twisted {$\mathcal D$}-modules and arithmetic models of Harish-Chandra modules.
		\newblock arXiv:1808.10709.
		
		\bibitem{MR55352}
		E.~In\"on\"u and E.~P. Wigner.
		\newblock On the contraction of groups and their representations.
		\newblock {\em Proc. Nat. Acad. Sci. U.S.A.}, 39:510--524, 1953.
		
		
		\bibitem{MR4680514}
		B.~Laurent and S.~Schr\"{o}er.
		\newblock Para-{A}belian {V}arieties and {A}lbanese {M}aps.
		\newblock {\em Bull. Braz. Math. Soc. (N.S.)}, 55(1):Paper No. 4, 2024.
		
		\bibitem{MR1917232}
		Q.~Liu.
		\newblock {\em Algebraic geometry and arithmetic curves}, volume~6 of {\em
			Oxford Graduate Texts in Mathematics}.
		\newblock Oxford University Press, Oxford, 2002.
		\newblock Translated from the French by Reinie Ern\'{e}, Oxford Science
		Publications.
		
		\bibitem{MR3729270}
		J.~S. Milne.
		\newblock {\em Algebraic groups}, volume 170 of {\em Cambridge Studies in
			Advanced Mathematics}.
		\newblock Cambridge University Press, Cambridge, 2017.
		\newblock The theory of group schemes of finite type over a field.
		
		\bibitem{MR0476772}
		E.~A. Nisnevi\v{c}.
		\newblock Affine homogeneous spaces and finite subgroups of arithmetic groups
		over function fields.
		\newblock {\em Funkcional. Anal. i Prilo\v{z}en.}, 11(1):73--74, 1977.
		
		\bibitem{MR437549}
		R.~W. Richardson.
		\newblock Affine coset spaces of reductive algebraic groups.
		\newblock {\em Bull. London Math. Soc.}, 9(1):38--41, 1977.
		
		
		
		\bibitem{MR1066573}
		R.~W. Richardson and T.~A. Springer.
		\newblock The {B}ruhat order on symmetric varieties.
		\newblock {\em Geom. Dedicata}, 35(1-3):389--436, 1990.
		
		\bibitem{MR1867431}
		J-P. Serre.
		\newblock {\em Galois cohomology}.
		\newblock Springer Monographs in Mathematics. Springer-Verlag, Berlin, english
		edition, 2002.
		\newblock Translated from the French by Patrick Ion and revised by the author.
		
		\bibitem{stacks}
		The Stacks Project Authors.
		\newblock {\em Stacks Project}.
		\newblock \url{http://stacks.math.columbia.edu}, 2023.
		
		\bibitem{MR0230728}
		R.~Steinberg.
		\newblock {\em Endomorphisms of linear algebraic groups}.
		\newblock Memoirs of the American Mathematical Society, No. 80. American
		Mathematical Society, Providence, R.I., 1968.
		
		\bibitem{MR0252485}
		M.~E. Sweedler.
		\newblock {\em Hopf algebras}.
		\newblock Mathematics Lecture Note Series. W. A. Benjamin, Inc., New York,
		1969.
		
	\end{thebibliography}
\end{document}